\documentclass[a4paper,12pt,reqno]{amsart}
\usepackage{setspace} 
\usepackage{amsmath, amsfonts,amsthm,amssymb,mathrsfs,slashed,cases}
\usepackage{enumerate}
\usepackage[dvipdfmx]{graphicx}
\usepackage{ascmac}
\usepackage[arrow,matrix]{xy}
\usepackage{color}
\usepackage{array}
\usepackage{pifont}
\usepackage{longtable}
\usepackage{mathtools}

\addtolength{\oddsidemargin} {-1.5cm}
\addtolength{\evensidemargin} {-1.5cm}
\addtolength{\headheight} {4pt}
\addtolength{\headsep} {0.5cm}
\addtolength{\textheight} {-1cm}
\addtolength{\textwidth} {3cm}
\addtolength{\footskip} {.8cm}
\parskip1ex

\theoremstyle{definition}
\newtheorem{theorem}{Theorem}[section]
\newtheorem*{theorem*}{Theorem}

\newtheorem*{definition*}{Definition}
\newtheorem{lemma}[theorem]{Lemma}
\newtheorem*{lemma*}{Lemma}

\newtheorem*{example*}{Example}
\newtheorem{proposition}[theorem]{Proposition}
\newtheorem*{proposition*}{Proposition}
\newtheorem{corollary}[theorem]{Corollary}
\newtheorem*{corollary*}{Corollary}

\newtheorem{maintheorem}{Theorem}

\newcommand{\ctext}[1]{\raise0.2ex\hbox{\textcircled{\scriptsize{#1}}}}

\DeclareMathOperator{\Spin}{Spin}
\DeclareMathOperator{\pr}{pr}
\DeclareMathOperator{\Id}{Id}
\DeclareMathOperator{\tr}{tr}
\DeclareMathOperator{\Ric}{Ric}
\DeclareMathOperator{\scal}{scal}
\DeclareMathOperator{\vol}{vol}

\DeclareMathOperator{\SO}{SO}
\DeclareMathOperator{\GL}{GL}
\DeclareMathOperator{\G}{G}

\DeclareMathOperator{\SU}{SU}

\DeclareMathOperator{\U}{U}

\DeclareMathOperator{\Sp}{Sp}
\DeclareMathOperator{\End}{End}

\DeclareMathOperator{\Hom}{Hom}
\DeclareMathOperator{\Sym}{Sym}

\DeclareMathOperator{\diag}{diag}
\DeclareMathOperator{\Cas}{Cas}

\def\C{\mathbb{C}}

\title{Infinitesimal deformations of Killing spinors on nearly parallel $\G_2$-manifolds}

\author{Soma Ohno}

\address{Soma Ohno, Department of Pure and applied Mathematics, Graduate school of fundamental science
and engineering, Waseda University, 3-4-1 Ohkubo, Shinjuku-ku, Tokyo, 169-8555, Japan.}
\email{runhorse@fuji.waseda.jp}

%\date{\today}

\begin{document}

\begin{abstract}
Manifolds admitting Killing spinors are Einstein manifolds. Thus, a deformation of a Killing spinor entails a deformation of Einstein metrics. In this paper, we study infinitesimal deformations of Killing spinors on nearly parallel $\G_2$-manifolds. Since there is a one-to-one correspondence between nearly parallel $\G_2$-structures and Killing spinors on 7-dimensional spin manifolds, our results imply that infinitesimal deformations of nearly parallel $\G_2$-structures are examined in terms of Killing spinors. Applying the same technique, we identify that the space of the Rarita-Schwinger fields coincides with a subspace of the eigenspace of the Laplacian.
\end{abstract}

\maketitle

\makeatletter
  \renewcommand{\theequation}{
  \thesection.\arabic{equation}}
 \@addtoreset{equation}{section}
\makeatother

\section{Introduction}
Let $(M,g)$ be an $n$-dimensional Riemannian spin manifold and $S_{1/2}$ a spinor bundle on $M$. A non-trivial spinor field $\kappa \in \Gamma(S_{1/2})$ is called a Killing spinor if for some constant $c \in \mathbb{C}$ and all vector fields $X \in \mathfrak{X}(M)$ it satisfies
\[
\nabla_X \kappa = cX \cdot \kappa,
\]
where $\nabla$ is the spinor connection induced by the Levi-Civita connection and $\cdot$ denotes Clifford multiplication. This constant $c$ is usually called the Killing number. A simple calculation shows that the Ricci curvature satisfies $\Ric = 4c^2(n-1)g$, that is, manifolds with Killing spinors must be Einstein manifolds with the Einstein constant $E=4c^2(n-1)$. Thus, the Killing number $c$ is either zero, pure imaginary, or real. In the $c=0$ case, the corresponding Killing spinor is a parallel spinor. When $c$ is pure imaginary or real, it is called an imaginary or real Killing spinor, respectively. If $c$ is real and $M$ is complete, then it is compact by Myer's theorem. On the other hand, if $c$ is pure imaginary, $M$ is non-compact. 

The problem of classifying manifolds with these Killing spinors was well thought out. Here are some of the major results. The problem was solved by Wang \cite{Wang1}, \cite{Wang2} for manifolds with parallel spinors, by Baum \cite{Baum} for those with imaginary Killing spinors, and by B\"{a}r \cite{Bar} for those with real Killing spinors. Every simply connected complete spin manifold which carries a parallel spinor belongs to one of the following classes: Calabi-Yau, hyperk\"{a}hler, $\G_2$ and $\Spin(7)$ manifolds. The spin manifold $(M,g)$ admits a real Killing spinor if and only if its Riemannian cone $(\bar{M},\bar{g})$, $\bar{M} = \mathbb{R}_+ \times M$, $\bar{g} = dr^2 + r^2g$, admits a parallel spinor. Thus, four cases, Einstein-Sasakian, 3-Sasakian, 6-dimensional nearly K\"{a}hler, and nearly parallel $\G_2$ manifolds, occur as manifolds with real Killing spinors. Furthermore, Killing spinors also play a crucial role in physics, especially in supergravity and string theories.

There have been several studies about deformations of Killing spinors. The general theory of deformations of Killing spinors was developed by Wang \cite{Wang}. Van Coevering \cite{Coevering} investigated Einstein deformations and associated Killing spinor's behaviors on some 3-Sasakian manifolds. Ohno and Tomihisa \cite{OhnoTomihisa} identified the space of the infinitesimal deformations of Killing spinors on nearly K\"{a}hler 6-manifolds. In this paper, we identify the space of the infinitesimal deformations of Killing spinors on nearly parallel $\G_2$-manifolds. A nearly parallel $\G_2$-manifold is characterized as an almost $\G_2$-manifold $(M^7,\phi,g)$ with $\ast d\phi = \tau_0\phi$ for some $\tau_0 \in \mathbb{R}^{\times}$. It was shown in \cite{FKMS} there is a one-to-one correspondence between nearly parallel $\G_2$-structures and Killing spinors on 7-dimensional spin manifolds. Now, our main result is the following.

\begin{maintheorem} \label{thmA}
Let $(M^7,\phi,g)$ be a compact nearly parallel $\G_2$-manifold and let $\kappa_0$ be a corresponding Killing spinor. Then the space of the infinitesimal deformations of the Killing spinor $\kappa_0$ is isomorphic to the space $\left\{ \gamma \in \Omega^3_{27}M \mid \ast d\gamma = -\tau_0\gamma \right\} \oplus K_+$. Here, $K_+$ is the space of all Killing spinors and the bundle $\wedge^3_{27}M$ is a 27-dimensional $\G_2$-irreducible component of $\wedge^3 M$.
\end{maintheorem}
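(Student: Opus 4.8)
The plan is to linearize the Killing spinor equation in the spirit of Wang's deformation theory and then translate the resulting system into the language of $\G_2$-representations, where the nearly parallel structure equations do the rest of the work.

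First I would set up the linearization. Using the Bourguignon--Gauduchon identification of spinor bundles for nearby metrics, a one-parameter family $g_t = g + th + O(t^2)$ carrying Killing spinors $\kappa_t = \kappa_0 + t\psi + O(t^2)$ (with Killing number $c_t$, $c_0 = c$ fixed by $\tau_0$) yields, upon differentiating $\nabla^{g_t}_X\kappa_t = c_t\,X\cdot_{g_t}\kappa_t$ at $t=0$, an equation of the shape
\[
\nabla_X\psi - c\,X\cdot\psi \;=\; \mathcal{A}_X(h)\cdot\kappa_0 \;+\; \dot c\,X\cdot\kappa_0 ,
\]
where $\mathcal{A}(h)$ is the explicit first-order expression in $h$ produced by the variation of the Levi-Civita connection. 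Together with the standard gauge-fixing (which lets one take $h$ transverse--traceless and normalises $\dot c = 0$), I take this system to define the space of infinitesimal deformations of $\kappa_0$; the residual freedom $\psi \mapsto \psi + \chi$ with $h = 0$ forces $\chi$ to be itself a Killing spinor, which is where the summand $K_+$ will ultimately come from.

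Next I would install the representation-theoretic dictionary. Since $\kappa_0$ is a unit real Killing spinor it trivialises a line in $S$, and Clifford multiplication gives the $\G_2$-splitting $S \cong \underline{\R}\kappa_0 \oplus (TM\cdot\kappa_0)$, so $\psi = f\kappa_0 + Y\cdot\kappa_0$ for a unique function $f$ and vector field $Y$. On the metric side the $\G_2$-isomorphism $\Sym^2_0 T^*M \cong \wedge^3_{27}M$, $h_0 \mapsto \gamma := \mathsf{i}_\phi(h_0)$, encodes the (traceless) tensor $h$ completely as a form $\gamma \in \Omega^3_{27}M$. Now I would rewrite the displayed identity entirely in terms of $(\gamma, f, Y)$, using that $\phi$ and ${*\phi}$ act on $\kappa_0$ as scalars, the structure equations $d\phi = \tau_0{*\phi}$, $d{*\phi}=0$, and the induced covariant-derivative formulas for $\phi$ and ${*\phi}$; projecting onto the $\G_2$-irreducible summands of $S$ and of $T^*M\otimes S$ converts the Clifford-algebra-valued equation into a system of differential-form identities for $\gamma$, $f$ and $Y$.

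Finally I would decouple the system. The $\underline{\R}\kappa_0$- and $TM\cdot\kappa_0$-components tie $f$ and $Y$ to $\gamma$ and its first derivatives; combined with the transverse--traceless gauge this should pin down $f$ and the part of $Y$ not already determined by $\gamma$ (a first-order system whose only kernel directions are the Killing-spinor ones already accounted for), while the $\wedge^3_{27}$-component collapses --- because $*d$ preserves $\Omega^3_{27}M$ modulo lower components that the gauge annihilates, and the zeroth-order contributions from $\nabla\phi$ and $\nabla({*\phi})$ supply the constant --- to precisely $*d\gamma = -\tau_0\gamma$. This embeds $\{\gamma \in \Omega^3_{27}M : *d\gamma = -\tau_0\gamma\}\oplus K_+$ into the deformation space; conversely, from such a $\gamma$ one reconstructs $h_0$ and the associated $\psi_\gamma$, adds an arbitrary element of $K_+$, and checks directly that the linearized equation holds, giving the isomorphism. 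The main obstacle is exactly this last decoupling step: arranging the $\G_2$-equivariant bookkeeping so that the unwieldy Clifford identity really does diagonalise into a vacuous (gauge-trivial) system for $f$ and the redundant part of $Y$ on the one hand, and the clean eigenvalue equation on $\Omega^3_{27}M$ on the other. In practice this means knowing how $d$ and $\delta$ act between the $\G_2$-summands of $\Omega^3 M$ on a nearly parallel manifold, matching $\psi \mapsto \nabla\psi - c\,X\cdot\psi$ restricted to $TM\cdot\kappa_0$ with a twist of $d+\delta$ on $\Omega^3_{27}M$, and tracking the zeroth-order $\tau_0$-terms carefully enough to land the sign $-\tau_0$ rather than $+\tau_0$.
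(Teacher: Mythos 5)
Your overall strategy is the same as the paper's: linearize in Wang's framework, trivialize $S_{1/2}\cong\wedge^0 M\oplus\wedge^1 M$ via $\sigma\mapsto\sigma\cdot\kappa_0$, identify $\Sym_0 M\cong\wedge^3_{27}M$ via $\mathbf{i}$, and project the resulting Clifford identity onto $\G_2$-irreducibles to land on $\ast d\gamma=-\tau_0\gamma$. But one of the two steps you treat as bookkeeping actually supplies the structure of the answer, and the other is where essentially all the work lies. On the first: the picture that the $\wedge^0 M$- and $\wedge^1 M$-components of the linearized equation ``tie $f$ and $Y$ to $\gamma$ and its first derivatives,'' with $K_+$ emerging as residual freedom, is incorrect, and this is not a small point because the \emph{direct sum} with $K_+$ rests on it. The correct statement is Wang's decoupling result, Proposition~\ref{prop: infinitesimal deformation of Killing spinor by Wang}: on a compact manifold with $\dot\alpha$ transverse--traceless, $d\mathcal{L}^c(\dot\alpha,\dot\kappa)=0$ is equivalent to the \emph{two independent} conditions that $\dot\kappa$ be a Killing spinor and that $D_{TM}\Psi^{(\dot\alpha,\kappa_0)}=nc\,\Psi^{(\dot\alpha,\kappa_0)}$. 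That equivalence is an $L^2$-orthogonality statement using compactness, not a by-product of splitting the pointwise Clifford identity into $\G_2$-summands; in particular $\dot\kappa$ is in no way determined by $\gamma$ --- it ranges freely over all of $K_+$, while $\gamma$ independently solves its own equation.

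On the second: the reduction of $D_{TM}\Psi^{(H,\kappa_0)}=\tfrac{7}{2}\Psi^{(H,\kappa_0)}$ to $\ast d\gamma=-\tau_0\gamma$, which you flag as ``the main obstacle,'' is the technical heart of the theorem and cannot be handled by generic $\G_2$-equivariance alone. The paper passes to the canonical $\G_2$-connection $\bar\nabla$ via relation~$(\ref{eq: twisted dirac relation})$, uses the decomposition $S_{1/2}\otimes TM^{\C}\cong\wedge^1M\oplus\wedge^2_7M\oplus\wedge^2_{14}M\oplus\Sym_0M\oplus\C g$ and projects the eigenvalue equation onto the $\Sym_0 M$ summand, and then invokes the explicit cross-product action formulas of Propositions~\ref{prop: cross product action1} and~\ref{prop: cross product action2} to convert $A_{e_j\star}\bar\nabla_{e_j}H$ and $\widetilde{A_{e_j\star}}\bar\nabla_{e_j}H$ into $\ast d\gamma$, $(\delta\gamma)_{\wedge^2_{14}}$, and $\delta H$. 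Those formulas in turn rest on the Schur-type identities and the relations $\ast(H_\star\phi)=-H_\star\psi$, $A_{X\star}\phi=3X\mathbin{\lrcorner}\psi$, $w_\star\phi=0$, and they are precisely what produces the eigenvalue $-\tau_0$ with the correct sign. Without the decoupling of the previous paragraph and without this explicit dictionary, what you have is a plausible outline rather than a proof.
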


Alexandrov and Semmelmann \cite{AlexandrovSemmelmann} showed that the space of the nearly parallel $\G_2$-structure $\phi$ is isomorphic to $\left\{ \gamma \in \Omega^3_{27}M \mid \ast d\gamma = -\tau_0\gamma \right\} \oplus \left\{ f_1 \in \Omega^1M \mid \nabla f_1 = -f_1 \mathbin{\lrcorner} \phi \right\}$. For the reason that nearly parallel structures correspond to Killing spinors, Theorem \ref{thmA} is a reproof of their result through Killing spinors.

Using the same method of proving Theorem \ref{thmA}, we can also examine Rarita-Schwinger fields, which can be called spin-$3/2$ version of the harmonic spinor. Rarita-Schwinger fields were introduced by Rarita and Schwinger \cite{RaritaSchwinger}, which are venerable subject in physics. On the other hand, there has been some recent research in mathematics (\cite{BarMazzeo}, \cite{YasushiSemmelmann}, \cite{YasushiTomihisa}, \cite{OhnoTomihisa}, \cite{Wang}). Homma and Semmelmann \cite{YasushiSemmelmann} classified the manifolds with parallel Rarita-Schwinger fields. They also find positive Einstein manifolds admitting Rarita-Schwinger fields, some 8-dimensional symmetric spaces and some algebraic manifolds. More recently, Ohno and Tomihisa \cite{OhnoTomihisa} identified the space of the Rarita-Schwinger fields with the space of the harmonic 3-forms on nearly K\"{a}hler 6-manifolds. In this paper, we identify the space of the Rarita-Schwinger fields on nearly parallel $\G_2$-manifolds.

\begin{maintheorem} \label{thmB}
The space of the Rarita-Schwinger fields is isomorphic to the space $\left\{ \gamma \in \Omega^3_{27}M \; \middle| \; \ast d\gamma = -\frac{\tau_0}{8}\gamma \right\}$ on compact nearly parallel $\G_2$-manifolds $(M^7, \phi, g)$.
\end{maintheorem}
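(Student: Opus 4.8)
The plan is to repeat the strategy used for Theorem~\ref{thmA}: trivialize the spinor bundle by the Killing spinor $\kappa_0$, rewrite the Rarita--Schwinger operator as a first-order operator on differential forms, and compute its kernel. For the setup, recall that on a $7$-dimensional spin manifold $S_{1/2}$ has rank $8$, and $\kappa_0$ gives a bundle isometry $S_{1/2}\cong\underline{\R}\kappa_0\oplus TM$ via $X\mapsto X\cdot\kappa_0$, hence $T^*M\otimes S_{1/2}\cong\wedge^1M\oplus\End(TM)$. Under this, the Clifford contraction $\mu\colon T^*M\otimes S_{1/2}\to S_{1/2}$ sends a $(1,1)$-tensor $A$ to $-(\tr A)\kappa_0$ plus the Clifford action on $\kappa_0$ of the $2$-form associated with the skew part of $A$; since $\wedge^2_{14}M=\mathfrak g_2$ annihilates $\kappa_0$ while $\wedge^2_7M$ acts on $\kappa_0$ like $TM$, one obtains
\[
  S_{3/2}=\ker\mu\;\cong\;\Sym^2_0TM\oplus\wedge^2_{14}M\oplus\wedge^2_7M\;\cong\;\wedge^3_{27}M\oplus\wedge^2_{14}M\oplus\wedge^2_7M,
\]
the last step being the $\G_2$-isomorphism $\Sym^2_0TM\cong\wedge^3_{27}M$. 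I would also fix the normalization $c=\tau_0/8$ of the Killing number, which is forced by $\Ric=24c^2g$ together with $\scal=\tfrac{21}{8}\tau_0^2$.

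Next I would compute the Rarita--Schwinger operator $Q=p\circ D|_{\Gamma(S_{3/2})}$, where $D$ is the twisted Dirac operator on $T^*M\otimes S_{1/2}$ and $p$ the fiberwise projection onto $S_{3/2}$, in this form language. Expanding $\nabla_{e_k}\!\big(f_i\kappa_0+\sum_jA_{ij}e_j\cdot\kappa_0\big)$ and repeatedly using $\nabla_X\kappa_0=cX\cdot\kappa_0$, the equation $Q\psi=0$ turns into an explicit coupled first-order system for the triple $(\gamma,\beta_{14},\beta_7)\in\Omega^3_{27}M\oplus\Omega^2_{14}M\oplus\Omega^2_7M$ whose coefficients are built from $d$, $d^*$ and the algebraic $\G_2$-maps given by contraction with $\phi$ and $\ast\phi$. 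Here the structure equations of a nearly parallel $\G_2$-manifold enter --- $d\phi=\tau_0\ast\phi$, $d\ast\phi=0$, and the fact that $\nabla\phi$ is algebraic in $\ast\phi$ with coefficient proportional to $\tau_0$ --- exactly as in the proof of Theorem~\ref{thmA}.

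Finally I would solve the system on the compact manifold $M$. The relations carrying the $\beta_7$ and $\beta_{14}$ components should, after being paired in $L^2$ against one another and combined with Hodge theory on the individual $\G_2$-summands, force $\beta_7$ and $\beta_{14}$ to vanish (or to be expressed through $\gamma$ by a closedness constraint that then propagates); substituting back, the whole system collapses to the single equation $\ast d\gamma=-\tfrac{\tau_0}{8}\gamma$ for $\gamma\in\Omega^3_{27}M$, with $d^*\gamma=0$ coming out automatically since on a compact manifold $\ast d\gamma=\lambda\gamma$ with $\lambda\neq 0$ already implies $d^*\gamma=\tfrac1\lambda d^*\!\ast d\gamma=-\tfrac1\lambda\ast d^2\gamma=0$. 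Conversely, any $\gamma\in\Omega^3_{27}M$ with $\ast d\gamma=-\tfrac{\tau_0}{8}\gamma$ reassembles into a genuine Rarita--Schwinger field through the formulas above, yielding the asserted isomorphism.

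The step I expect to be the main obstacle is the computation of $Q$ in form language: identifying each $\G_2$-component of $D\psi$, correctly accounting for the projection $p$, and keeping track of all the structure constants is a substantial $\G_2$-representation-theoretic and Clifford-algebraic calculation. The second delicate point is the elimination of the auxiliary components $\beta_7,\beta_{14}$ in the last step, which should rest on the Bochner technique: positivity of the scalar curvature of the Einstein manifold $M$ rules out the offending eigenvalues of $d^*d$ on coclosed forms in $\Omega^2_7M$ and $\Omega^2_{14}M$. Granting these two points, the reduction to the eigenvalue equation $\ast d\gamma=-\tfrac{\tau_0}{8}\gamma$ is formal.
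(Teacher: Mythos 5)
Your overall strategy matches the paper's: trivialize $S_{1/2}$ by $\kappa_0$, read off the $\G_2$-decomposition of $S_{1/2}\otimes TM^{\C}$, project the Dirac equation onto each summand, and reduce to an equation on $\Omega^3_{27}M$. Your identification of the fiber $S_{3/2}\cong\wedge^3_{27}M\oplus\wedge^2_{14}M\oplus\wedge^2_7M$ is correct, and the normalization $c=\tau_0/8$ is right. However, there are two genuine problems with the way you envision the reduction going through.

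First, the elimination of the auxiliary components is not a one-shot Bochner argument on $\Omega^2_7M$ and $\Omega^2_{14}M$. In the paper, what actually happens is asymmetric: the $\wedge^1$-type component (your $\beta_7$, in the paper the function-valued piece $\alpha_0^{(i)}e_i$) is killed by a positivity argument, but crucially this argument requires passing to the \emph{squared} operator $D_{TM}^2$ and projecting onto $\wedge^1M$, where it produces $\Delta(\alpha_0^{(i)}e_i)=-\tfrac34\alpha_0^{(i)}e_i$ and hence vanishing. Your plan, which stays at first order with $Q\psi=0$ and applies Hodge theory on 2-forms, does not produce this. Meanwhile the $\wedge^2_{14}$-component ($w$) is \emph{not} killed; it survives and is determined by $\gamma$ via $w=-\tfrac16\delta\gamma$. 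Relatedly, working only with $Q\psi=0$ rather than the full $D_{TM}\psi=0$ drops the $P^*\psi=0$ constraint, which is exactly what feeds the $\wedge^0\otimes\wedge^1$ projection that you need.

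Second, and more importantly, after the auxiliary components are dealt with the resulting equation for $\gamma$ is \emph{not} yet the first-order eigenvalue equation; it is the second-order system
\[
4\,d\delta\gamma + 6\ast d\gamma + 3\gamma = 0,\qquad (\delta\gamma)_{\wedge^2_7}=0,
\]
on $\Omega^3_{27}M$. Your statement that ``substituting back, the whole system collapses to the single equation $\ast d\gamma=-\tfrac{\tau_0}{8}\gamma$'' and that ``the reduction to the eigenvalue equation is formal'' is not right. One must decompose the solution space into $\ast d$-eigenspaces. The coclosed eigenvalues $\lambda$ satisfy the quadratic $4\lambda^2+8\lambda+3=0$, which has \emph{two} roots, $\lambda=-\tfrac12$ and $\lambda=-\tfrac32$; the second one must then be discarded because it fails the original first-order relation $4\,d\delta\gamma+6\ast d\gamma+3\gamma=0$. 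Separately, the closed case (where $\ast d\gamma=0$) gives $d\delta\gamma=-\tfrac34\gamma$ and is ruled out by non-negativity of $\Delta$. Only after both of these exclusions does the kernel collapse to $\{\gamma\in\Omega^3_{27}M\mid\ast d\gamma=-\tfrac{\tau_0}{8}\gamma\}$. This disambiguation between the two quadratic roots is not formal and is missing from your plan.
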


In Section \ref{sec: Preliminalies} we review some notions dealt with in this paper. Notably, in Subsection \ref{subsec: infinitesimal deformations of killing spinors} we discuss deformations of Killing spinors under metric variations, and in Subsection \ref{subsec: nearly parallel G_2}-\ref{subsec: curvature endo} we do the definition and properties of nearly parallel $\G_2$-manifolds. In Subsection \ref{subsec: action of cross product} we prepare the first key tools necessary to prove Theorem \ref{thmA} and \ref{thmB}. In Section \ref{sec: comparison of differential operators}, we construct relations between various differential operators. These are the second key tools. In Section \ref{sec: infinitesimal deformation of Killing spinor} we investigate infinitesimal deformations of Killing spinors using these key tools and thus obtain Theorem \ref{thmA}. As noted above, Theorem \ref{thmA} and \ref{thmB} are shown using similar techniques. Hence, we arrive at Theorem \ref{thmB} in Section \ref{sec: Rarita-Schwinger fields}. Finally, in Section \ref{sec: examples and applications} we show that there are no non-trivial Rarita-Schwinger fields on some concrete nearly parallel $\G_2$-manifolds.

\section{Preliminalies} \label{sec: Preliminalies}
\subsection{Infinitesimal deformations of Killing spinors} \label{subsec: infinitesimal deformations of killing spinors} 
In this subsection, we deal with Killing spinor variations. See \cite{Wang} for detailed concepts. Let $(M,g)$ be an oriented Riemannian manifold with a spin structure $P_{\Spin(n)}M \to P_{\SO(n)}M$, where $P_{\SO(n)}M$ is the bundle of $g$-orthonormal oriented frames and $P_{\Spin(n)}M$ is its (equivariant) double cover. For each $g$-symmetric automorphism $\alpha: TM \to TM$, we define a new metric by
\[
g^{\alpha}(X,Y) \coloneqq g \left(\alpha^{-1}(X),\alpha^{-1}(Y) \right) \quad {\rm for \: all} \; X,Y \in \Gamma(TM).
\]
Similarly, we denote the bundle of $g^{\alpha}$-orthonormal oriented frames by $P^{\alpha}_{\SO(n)}M$ and the double cover of $P^{\alpha}_{\SO(n)}M$ by $P^{\alpha}_{\Spin(n)}M$. Obviously, the symmetric isomorphism $\alpha$ gives an isomorphism $P_{\SO(n)}M \cong P^{\alpha}_{\SO(n)}M$ and further induces the spin structure equivalence $P_{\Spin(n)}M \cong P^{\alpha}_{\Spin(n)}M$. Thus, the respective spinor bundles $S_{1/2}$ and $S^{\alpha}_{1/2}$ corresponding to the spin structures $P_{\Spin(n)}M$ and $P^{\alpha}_{\Spin(n)}M$ are isomorphic, and we write this isomorphism as $\tilde{\alpha}: S_{1/2} \to S_{1/2}^{\alpha}$. Set the Levi-Civita connections for metrics $g$ and $g^{\alpha}$ to $\nabla$ and $\nabla^{\alpha}$, respectively. Then the connection $\bar{\nabla}^{\alpha}$ determined by $\bar{\nabla}^{\alpha} \coloneqq \alpha^{-1} \circ \nabla^{\alpha} \circ \alpha$ is the metric connection of $g$ with torsion.

We assume that $\alpha(t)$ is a smooth path of symmetric automorphism with $\alpha(0) = \Id_{TM}$ and that for each $\alpha(t)$ the metric $g^{\alpha(t)}$ admits a Killing spinor $\kappa_t^{\#}$. We note that if we further assume $\vol(g^{\alpha(t)})=1$, the Killing constant $c$ is independent of variables $t$. Since $\nabla^{\alpha(t)}_X \kappa_t^{\#} = cX \cdot_t \kappa_t^{\#}$ holds, it follows that
\begin{equation} \label{eq: deformation of Killing spinor}
\mathcal{L}^c(\alpha(t), \kappa_t)(X) \coloneqq \bar{\nabla}^{\alpha(t)}_X \kappa_t - c\alpha(t)^{-1}(X) \cdot \kappa_t = 0,
\end{equation}
where $\kappa_t$ is the pull-back path $\widetilde{\alpha(t)}^{-1}(\kappa_t^{\#})$ and $\bar{\nabla}^{\alpha(t)}$ is the pull-back of the metric connection ${\nabla}^{\alpha(t)}$ to the original spinor bundle $S_{1/2}$ defined by $\bar{\nabla}^{\alpha(t)} = \widetilde{\alpha(t)}^{-1} \circ \nabla^{\alpha(t)} \circ \widetilde{\alpha(t)}$. We call a path $(\alpha(t), \kappa_t)$ satisfying the equation $(\ref{eq: deformation of Killing spinor})$ {\it a deformation of the Killing spinor $\kappa_0$}. We use the dot to denote the derivative at $t=0$ of paths $\alpha(t)$ and $\kappa_t$.

Now, we introduce the twisted Dirac operator on $S_{1/2} \otimes TM^{\C}$ by
\[
D_{TM} = \sum_{k=1}^n(e_k \cdot \otimes \Id_{TM^{\C}}) \circ \nabla_{e_k},
\]
where $\{ e_k \}$ is a local orthonormal frame of $TM$. Let the vector bundle $S_{3/2}$ be the kernel of Clifford multiplication $S_{1/2} \otimes TM^{\C} \ni \zeta \otimes X \mapsto X \cdot \zeta \in S_{1/2}$. Then, we have the $\Spin(n)$-decomposition 
\[
S_{1/2} \otimes TM^{\C} \cong S_{1/2} \oplus S_{3/2}.
\]
With respect to this decomposition, we can write $D_{TM}$ as the $2 \times 2$-matrix
\[
D_{TM} = \left(
 \begin{array}{ccc}
 \frac{2-n}{n} D & 2 P^{\ast} \\[1ex]
 \frac{2}{n}P & Q
 \end{array}
\right),
\]
where $P: \Gamma(S_{1/2}) \to \Gamma(S_{3/2})$ is the Penrose operator and $P^{\ast}$ is the formal adjoint operator of $P$. The operator $Q:\Gamma(S_{3/2}) \rightarrow \Gamma(S_{3/2})$ is called {\it the Rarita-Schwinger operator}, which is a formally self-adjoint elliptic differential operator of first order.

For given a spinor $\kappa_0$ and an endomorphism $H: TM \to TM$, we provide a tensor field $\Psi^{(H,\kappa_0)} \in \Gamma(S_{1/2} \otimes T^{\ast}M^{\C})$:
\begin{equation*}
\Psi^{(H,\kappa_0)}(X) = H(X) \cdot \kappa_0 \quad {\rm for \: all} \; X \in \Gamma(TM).
\end{equation*}
Differentiating the equation $(\ref{eq: deformation of Killing spinor})$ at $(\Id,\kappa_0)$, then we have

\begin{proposition}[\cite{Wang}] \label{prop: infinitesimal deformation of Killing spinor by Wang}
Let $(M,g)$ be a Riemannian spin manifold admitting a non-zero Killing spinor $\kappa_0$ with constant $c$. Then we have
\[
d\mathcal{L}^c(\dot{\alpha},\dot{\kappa}) = \nabla_X\dot{\kappa} - cX \cdot \dot{\kappa} + c\dot{\alpha}(X) \cdot \kappa_0 - \frac{1}{2}\sum_i e_i \cdot (\nabla_{e_i}\dot{\alpha})(X) \cdot \kappa_0 + \frac{1}{2}g(\delta\dot{\alpha},X)\kappa_0.
\]
If $M$ is compact and $\dot{\alpha}$ satisfies $\tr\dot{\alpha}=0=\delta\dot{\alpha}$, then $d\mathcal{L}^c(\dot{\alpha},\dot{\kappa})=0$ if and only if $\nabla_X\dot{\kappa}=cX \cdot \dot{\kappa}$ and $D_{TM}\Psi^{(\dot{\alpha},\kappa_0)} = nc\Psi^{(\dot{\alpha},\kappa_0)}$.
\end{proposition}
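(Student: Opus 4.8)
The plan is to differentiate the defining equation $(\ref{eq: deformation of Killing spinor})$ at $t=0$ and then, for the second assertion, to analyse the resulting first-order equation via the splitting $S_{1/2}\otimes TM^{\C}\cong S_{1/2}\oplus S_{3/2}$ together with one integration by parts. For the formula, the summand $-c\,\alpha(t)^{-1}(X)\cdot\kappa_t$ differentiates at once, using $\frac{d}{dt}\big|_0\alpha(t)^{-1}=-\dot\alpha$, to $c\,\dot\alpha(X)\cdot\kappa_0-cX\cdot\dot\kappa$. The real work is the variation of $\bar\nabla^{\alpha(t)}_X\kappa_t$. Writing $\bar\nabla^{\alpha(t)}=\nabla+\omega(t)$ with $\omega(0)=0$ and observing that each $\bar\nabla^{\alpha(t)}$ is $g$-metric, so that $\omega(t)(X)$ is the image in $\mathrm{Cl}(n)$ of the difference tensor $B(t)(X)\in\mathfrak{so}(TM,g)$ under $\mathfrak{so}(n)\hookrightarrow\mathrm{Cl}(n)$, one gets $\frac{d}{dt}\big|_0\bar\nabla^{\alpha(t)}_X\kappa_t=\nabla_X\dot\kappa+\dot B(X)\cdot\kappa_0$. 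On $TM$ one has $\dot B(X)Y=\dot\Gamma(X,Y)+(\nabla_X\dot\alpha)Y$, where $\dot\Gamma$ is the classical first variation of the Christoffel symbols for the metric variation $\dot g=-2g(\dot\alpha\,\cdot\,,\cdot)$; inserting $2g(\dot\Gamma(X,Y),Z)=(\nabla_X\dot g)(Y,Z)+(\nabla_Y\dot g)(X,Z)-(\nabla_Z\dot g)(X,Y)$ gives, after cancellation, $g(\dot B(X)e_i,e_j)=g((\nabla_{e_j}\dot\alpha)X,e_i)-g((\nabla_{e_i}\dot\alpha)X,e_j)$, which is visibly skew in $i,j$. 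Acting on $\kappa_0$ by $\tfrac14\sum_{i,j}g(\dot B(X)e_i,e_j)\,e_i\cdot e_j\cdot$ and collapsing the double sum with $e_i\cdot e_j+e_j\cdot e_i=-2\delta_{ij}$, which also produces a trace term to be identified with $\tfrac12 g(\delta\dot\alpha,X)\kappa_0$ where $(\delta\dot\alpha)(X)=-\sum_i(\nabla_{e_i}\dot\alpha)(e_i,X)$, yields exactly $-\tfrac12\sum_i e_i\cdot(\nabla_{e_i}\dot\alpha)(X)\cdot\kappa_0+\tfrac12 g(\delta\dot\alpha,X)\kappa_0$, hence the stated formula.

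For the equivalence, put $\tr\dot\alpha=\delta\dot\alpha=0$ and decompose $d\mathcal{L}^c(\dot\alpha,\dot\kappa)=A+B$, where $A(X)=\nabla_X\dot\kappa-cX\cdot\dot\kappa$ and $B(X)=c\,\dot\alpha(X)\cdot\kappa_0-\tfrac12\sum_i e_i\cdot(\nabla_{e_i}\dot\alpha)(X)\cdot\kappa_0$. Expanding $D_{TM}\Psi^{(\dot\alpha,\kappa_0)}$ with the Killing equation $\nabla_{e_j}\kappa_0=c\,e_j\cdot\kappa_0$ and the identity $\sum_j e_j\cdot v\cdot e_j=(n-2)v$ gives the purely algebraic relation $B=\tfrac12\bigl(nc\,\Psi^{(\dot\alpha,\kappa_0)}-D_{TM}\Psi^{(\dot\alpha,\kappa_0)}\bigr)$, so that $B=0$ is literally $D_{TM}\Psi^{(\dot\alpha,\kappa_0)}=nc\,\Psi^{(\dot\alpha,\kappa_0)}$, while $A=0$ is literally $\nabla_X\dot\kappa=cX\cdot\dot\kappa$. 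It therefore remains to show that $A+B=0$ forces $A=0$ and $B=0$; for this I would prove $\langle A,B\rangle_{L^2}=0$, whence $0=\|A+B\|_{L^2}^2=\|A\|_{L^2}^2+\|B\|_{L^2}^2$ and $A\equiv B\equiv0$.

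To prove $\langle A,B\rangle_{L^2}=0$ I would first record the key lemma: for any $\psi\in\Gamma(S_{1/2})$, the codifferential of the $1$-form $\beta(X)=\langle\psi,\dot\alpha(X)\cdot\kappa_0\rangle$ equals $-\sum_k\langle\nabla_{e_k}\psi,\dot\alpha(e_k)\cdot\kappa_0\rangle$, because $\sum_k\nabla_{e_k}(\dot\alpha(e_k)\cdot\kappa_0)$ vanishes identically --- its two summands being $\bigl(\sum_k(\nabla_{e_k}\dot\alpha)e_k\bigr)\cdot\kappa_0=-(\delta\dot\alpha)^{\sharp}\cdot\kappa_0=0$ and $c\sum_k\dot\alpha(e_k)\cdot e_k\cdot\kappa_0=-c(\tr\dot\alpha)\kappa_0=0$ --- so $\int_M\sum_k\langle\nabla_{e_k}\psi,\dot\alpha(e_k)\cdot\kappa_0\rangle\,\vol=0$ on the closed $M$. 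Applied with $\psi=\dot\kappa$ this gives $\langle A,\Psi^{(\dot\alpha,\kappa_0)}\rangle_{L^2}=0$, since the $cX\cdot\dot\kappa$ part contributes $-c\langle\dot\kappa,(\sum_k e_k\cdot\dot\alpha(e_k))\cdot\kappa_0\rangle=c(\tr\dot\alpha)\langle\dot\kappa,\kappa_0\rangle=0$. For $\langle A,D_{TM}\Psi^{(\dot\alpha,\kappa_0)}\rangle_{L^2}=\langle D_{TM}A,\Psi^{(\dot\alpha,\kappa_0)}\rangle_{L^2}$ I would compute $(D_{TM}A)(e_k)$ directly: commuting the two covariant derivatives brings in the curvature $R^{S}$ of the spinor bundle, and the Einstein condition $\Ric=4c^2(n-1)g$ converts $\sum_j e_j\cdot R^{S}(e_j,e_k)\dot\kappa$ into $2c^2(n-1)e_k\cdot\dot\kappa$; the four resulting terms, $\nabla_{e_k}(D\dot\kappa)$, $2c^2(n-1)e_k\cdot\dot\kappa$, $c\,e_k\cdot D\dot\kappa$ and $2c\nabla_{e_k}\dot\kappa$, each pair to zero against $\Psi^{(\dot\alpha,\kappa_0)}$ --- the first and fourth by the lemma (with $\psi=D\dot\kappa$ and $\psi=\dot\kappa$), the second and third because $\sum_k e_k\cdot\dot\alpha(e_k)=-\tr\dot\alpha=0$. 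Hence $\langle A,B\rangle_{L^2}=\tfrac{nc}2\langle A,\Psi^{(\dot\alpha,\kappa_0)}\rangle_{L^2}-\tfrac12\langle D_{TM}A,\Psi^{(\dot\alpha,\kappa_0)}\rangle_{L^2}=0$, as required; the converse implication is immediate from the formula because $\delta\dot\alpha=0$.

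The anticipated obstacle is bookkeeping rather than idea: in the first part, keeping consistent sign conventions for the divergence of a symmetric $2$-tensor, for the skew-adjointness of Clifford multiplication by real vectors, and for the inclusion $\mathfrak{so}(n)\hookrightarrow\mathrm{Cl}(n)$ and its associated spin action; and in the second part, carrying out the curvature commutator for $D_{TM}A$ and verifying that all four of the terms it produces really do integrate to zero against $\Psi^{(\dot\alpha,\kappa_0)}$ --- this is the place where $\tr\dot\alpha=0$, $\delta\dot\alpha=0$, the Einstein relation and the Killing equation are all used at once.
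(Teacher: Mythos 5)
Your argument is correct and reconstructs the proof that the paper simply cites from Wang \cite{Wang} without reproducing. The first-variation formula is obtained exactly as you describe (linearising $\bar\nabla^{\alpha(t)}$, writing the difference tensor via the first variation of the Christoffel symbols for $\dot g=-2g(\dot\alpha\,\cdot,\cdot)$, pushing through the lift $\mathfrak{so}(n)\hookrightarrow\mathrm{Cl}(n)$), and the equivalence under $\tr\dot\alpha=\delta\dot\alpha=0$ is proved by the standard Wang decoupling: split $d\mathcal{L}^c=A+B$ with $B=\tfrac12(nc\,\Psi^{(\dot\alpha,\kappa_0)}-D_{TM}\Psi^{(\dot\alpha,\kappa_0)})$, then show $\langle A,B\rangle_{L^2}=0$ on the compact $M$ using $\sum_k\nabla_{e_k}(\dot\alpha(e_k)\cdot\kappa_0)=0$ (which needs both $\delta\dot\alpha=0$ and $\tr\dot\alpha=0$), the self-adjointness of $D_{TM}$, the curvature commutation with $\Ric=4c^2(n-1)g$, and $\sum_k e_k\cdot\dot\alpha(e_k)=-\tr\dot\alpha=0$. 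All the identities you invoke check out, so the proposal is a valid proof of the stated proposition.
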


In view of Proposition \ref{prop: infinitesimal deformation of Killing spinor by Wang}, we call a pair $(H, \kappa)$ {\it an infinitesimal deformation of the Killing spinor $\kappa_0$ with constant $c$} if $H:TM \to TM$ symmetric and $\kappa \in \Gamma(S_{1/2})$ satisfy
 \begin{enumerate}
 \renewcommand{\labelenumi}{(\roman{enumi})}
 \item $\kappa$ is a Killing spinor with constant $c$. 
 \item $\tr H = \delta H=0$.
 \item $D_{TM} \Psi^{(H, \kappa_0)} = nc\Psi^{(h, \kappa_0)}$.
 \end{enumerate}
We define the symmetric tensor $h$ corresponding to the symmetric endomorphism $H$ by $h(X,Y) \coloneqq -2g(H(X),Y)$. It is important that this $h$ is an infinitesimal Einstein deformation (cf. \cite{Wang}).

\subsection{Nearly parallel $\G_2$-manifolds} \label{subsec: nearly parallel G_2}
Let $(e_1, \ldots, e_7)$ be the standard coordinate system on $\mathbb{R}^7$ and $(e^1, \ldots, e^7)$ its dual. We define a 3-form $\phi_0$ on $\mathbb{R}^7$ as
\[
\phi_0 = e^{123}+e^{176}+e^{257}+e^{653}+e^{145}+e^{246}+e^{347},
\]
where wedge signs are omitted. This 3-form $\phi_0$ is called the associative 3-form. The stabilizer of $\phi_0$ under the action of $\GL(7,\mathbb{R})$ on $\wedge^3 \mathbb{R}^7$ is the Lie group $\G_2$:
\[
\G_2 = \left\{ A \in \GL(7,\mathbb{R}) \mid A^{\ast}\phi_0=\phi_0 \right\}.
\]
The group $\G_2$ is a 14-dimensional compact, connected, simply-connected, simple Lie group, which preserves both the metric and orientation on $\mathbb{R}^7$. Therefore it also preserves the Hodge star $\ast$, and especially the 4-form
\[
\psi_0 = \ast \phi_0 = e^{4567}-e^{2345}+e^{1346}-e^{1247}+e^{2367}+e^{1357}+e^{1256}.
\]
Let $M$ be an oriented 7-manifold, and $\phi$ be a 3-form on $M$. At each point $x \in M$, if there exists an orientation preserving isomorphism $T_x M \to \mathbb{R}^7$ that identifies $\phi_x$ and $\phi_0$, we call $\phi$ a (almost) $\G_2$-structure. We remark that $\G_2$-structures correspond one-to-one with reductions of the structure group of $M$ to $\G_2$. This 3-form $\phi$ determines a Riemannian metric $g$ via the relation
\[
(X \mathbin{\lrcorner} \phi) \wedge (Y \mathbin{\lrcorner} \phi) \wedge \phi = -6g(X,Y)\vol \quad {\rm for \: all} \; X,Y \in \Gamma(TM),
\]
where $\vol$ is a volume form of $g$, and this orientation is compatible with the original one. Our choice of the orientation is the opposite of Bryant \cite{Bryant}, but the same as Alexandrov and Semmelmann \cite{AlexandrovSemmelmann}. Furthermore, a $\G_2$-structure $\phi$ induces a cross product $A: TM \times TM \to TM$ and a tangent valued 3-form $\chi$ on $M$ by
\[
g(X,A(Y,Z)) = \phi(X,Y,Z), \quad \frac{1}{2}g(X,\chi(Y,Z,W)) = \psi(X,Y,Z,W),
\]
for all $X,Y,Z,W \in \Gamma(TM)$. We also define a endomorphism $A_Y$ by $A_Y Z \coloneqq A(Y,Z)$. A $\G_2$-structure $\phi$ is called {\it a nearly parallel $\G_2$-structure} if
\begin{equation} \label{eq: def of nearly parallel G2}
\ast d\phi = \tau_0 \phi,
\end{equation}
for some nonzero constant $\tau_0$. Alexandrov and Semmelmann \cite{AlexandrovSemmelmann} examined conditions that are equivalent to the equation $(\ref{eq: def of nearly parallel G2})$.
\begin{proposition}[{\cite[Proposition 2.4]{AlexandrovSemmelmann}}]
Let $(M,\phi,g)$ be an almost $\G_2$-manifold. Then the following conditions are equivalent to each other:
 \begin{enumerate}
 \renewcommand{\labelenumi}{(\roman{enumi})}
 \item There exists a $\tau_0 \in \mathbb{R} \backslash \{0\}$ with $\ast d\phi = \tau_0 \phi$.
 \item There exists a $\tau_0 \in \mathbb{R} \backslash \{0\}$ with $\nabla \phi = \frac{\tau_0}{4}\ast \phi$.
 \item There exists a $\tau_0 \in \mathbb{R} \backslash \{0\}$ with $\nabla_X \psi = - \frac{\tau_0}{4}X \wedge \phi$ for all vector fields $X$.
 \end{enumerate}
\end{proposition}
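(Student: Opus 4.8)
The plan is to encode $\nabla\phi$ by a single $(0,2)$-tensor and then read off each equivalence from the $\G_2$-decomposition of $\wedge^\bullet\R^7$. First I would record the structural fact that, because $g$ is an algebraic function of $\phi$ and $\nabla g=0$, the derivative $\nabla_X\phi$ lies for every $X$ in the tangent space of the $\SO(7)$-orbit of the model form, which — since $\mathfrak g_2$ annihilates $\phi_0$ — is the $7$-dimensional subbundle $\wedge^3_7M=\{Y\mathbin{\lrcorner}\psi\mid Y\in TM\}\cong T^\ast M$. Hence there is a unique $T\in\Gamma(T^\ast M\otimes T^\ast M)$ with $\nabla_X\phi=T(X)\mathbin{\lrcorner}\psi$ for all $X$, and since $\psi=\ast\phi$ and $\ast$ commutes with $\nabla$, also $\nabla_X\psi=\ast(T(X)\mathbin{\lrcorner}\psi)=-T(X)\wedge\phi$, using the pointwise identity $\ast(Z\mathbin{\lrcorner}\psi)=-Z\wedge\phi$ valid on a $7$-manifold. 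Applying $d=\sum_k e^k\wedge\nabla_{e_k}$ together with $\sum_k e^k\wedge(e_k\mathbin{\lrcorner}\psi)=4\psi$ and $e^k\wedge e^k\wedge\phi=0$ yields
\[
d\phi=\sum_k e^k\wedge\bigl(T(e_k)\mathbin{\lrcorner}\psi\bigr),\qquad d\psi=-\sum_k e^k\wedge\bigl(T(e_k)\wedge\phi\bigr).
\]

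With this dictionary, (ii)$\Leftrightarrow$(iii) is immediate: both assert $T=\tfrac{\tau_0}{4}g$. Indeed (ii) reads $T(X)\mathbin{\lrcorner}\psi=\tfrac{\tau_0}{4}(X\mathbin{\lrcorner}\psi)$, and $Y\mapsto Y\mathbin{\lrcorner}\psi$ is injective on $T^\ast M$; (iii) reads $T(X)\wedge\phi=\tfrac{\tau_0}{4}X\wedge\phi$, and $Y\mapsto Y\wedge\phi$ is injective on $T^\ast M$ (its image is $\wedge^4_7M$). The implication (ii)$\Rightarrow$(i) is the one-line computation $d\phi=\tfrac{\tau_0}{4}\sum_k e^k\wedge(e_k\mathbin{\lrcorner}\psi)=\tau_0\psi=\tau_0\ast\phi$, hence $\ast d\phi=\tau_0\phi$ since $\ast\ast=\Id$ on forms in dimension $7$.

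The only substantial direction is (i)$\Rightarrow$(ii). Here I would decompose $T=\tfrac{\tr T}{7}g+T_7+T_{14}+T_{27}$ into its $\G_2$-irreducible parts in $\R g$, $\wedge^2_7M$, $\wedge^2_{14}M$, $S^2_0M$ (dimensions $1,7,14,27$). The first identity above is a $\G_2$-equivariant map $T^\ast M\otimes T^\ast M\to\wedge^4M$, and $\wedge^4M=\wedge^4_1M\oplus\wedge^4_7M\oplus\wedge^4_{27}M$ has no $14$-dimensional summand; by Schur's lemma it annihilates $T_{14}$ and is injective on each of the other three summands (the normalisations, e.g.\ $\lambda g\mapsto 4\lambda\psi$, being nonzero). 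Thus $\ast d\phi=\tau_0\phi$, i.e.\ $d\phi=\tau_0\psi\in\wedge^4_1M$, forces $T_7=T_{27}=0$ and $\tr T=7\tau_0/4$, leaving $T=\tfrac{\tau_0}{4}g+T_{14}$. Feeding this into the second identity, the $g$-part contributes $0$, so $d\psi=c\,\omega\wedge\phi$ for a nonzero constant $c$ and $\omega\in\Omega^2_{14}M$ the $2$-form attached to $T_{14}$; then $0=d^2\phi=d(\tau_0\psi)=\tau_0\,c\,\omega\wedge\phi$, and since $\tau_0\neq0$ and $\alpha\mapsto\alpha\wedge\phi$ is injective on $\Omega^2_{14}M$, we get $\omega=0$, hence $T=\tfrac{\tau_0}{4}g$, which is (ii). (If (i) were stated with a function $\tau_0$, the same $d^2\phi=0$ computation would force it constant, since $d\tau_0\wedge\psi$ and $\omega\wedge\phi$ lie in the $\Omega^5_7M$ and $\Omega^5_{14}M$ summands of $\Omega^5M$ respectively.)

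The main obstacle is thus not any of these deductions but the underlying $\G_2$ linear algebra of the first step — the module decompositions of $\wedge^k\R^7$, the identification $\ker(dg_\phi)=\wedge^3_7$, and the nonvanishing of the Schur scalars and of $\wedge\phi$ on $\wedge^2_{14}$ — all of which is classical (Fern\'andez--Gray, Bryant, Karigiannis) and would simply be cited. Once that is granted, everything reduces to bookkeeping with Hodge stars, the formula $d=\sum_k e^k\wedge\nabla_{e_k}$, and the identity $d^2=0$.
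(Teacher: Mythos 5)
The paper does not prove this proposition \emph{--} it simply cites \cite[Prop.~2.4]{AlexandrovSemmelmann} and moves on \emph{--} so there is no ``paper proof'' to compare against. Your proposal supplies a complete argument, and it is the standard one via the intrinsic-torsion tensor (Fern\'andez--Gray). The structure is sound: the identification $\nabla\phi\in\Gamma(T^\ast M\otimes\wedge^3_7M)\cong\Gamma(T^\ast M\otimes T^\ast M)$ follows exactly as you say from $\nabla g=0$ and $\mathfrak{so}(7)\cdot\phi_0=\wedge^3_7$; the identity $\ast(\iota_Z\psi)=-Z\wedge\phi$ and $\sum_k e^k\wedge\iota_{e_k}\psi=4\psi$ are correct; (ii)$\Leftrightarrow$(iii) is indeed immediate from injectivity of $Y\mapsto Y\lrcorner\psi$ and $Y\mapsto Y\wedge\phi$; (ii)$\Rightarrow$(i) is the normalization $4\cdot\tfrac{\tau_0}{4}=\tau_0$; and for (i)$\Rightarrow$(ii), the Schur-lemma analysis of the equivariant map $T\mapsto d\phi$ into $\wedge^4_1\oplus\wedge^4_7\oplus\wedge^4_{27}$ (which kills $\wedge^2_{14}$ and is injective on the remaining summands), followed by the $d^2\phi=0$ argument to eliminate $T_{14}$, is exactly right. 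The one place you wave rather than compute is the nonvanishing of the Schur constants on the $\wedge^2_7$ and $S^2_0$ summands, but you flag this honestly, and it is indeed classical (it is the content of the Fern\'andez--Gray formulas $d\phi=\tau_0\psi+3\tau_1\wedge\phi+\ast\tau_3$, $d\psi=4\tau_1\wedge\psi+\tau_2\wedge\phi$). Your closing observation that $d^2\phi=0$ would also force a function $\tau_0$ to be constant is a nice extra.

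Net assessment: correct, and essentially the argument that Alexandrov--Semmelmann (and Bryant, Karigiannis) give; it is the natural route and there is no simpler one.
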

We call a 7-dimensional manifold $M$ with a nearly parallel $\G_2$-structure $\phi$ a nearly parallel $\G_2$-manifold. The canonical connection $\bar{\nabla}$ of a $\G_2$-structure, defined by
\begin{equation} \label{eq: G2 conn on tangent bundle}
\bar{\nabla}_X Y \coloneqq \nabla_X Y - \frac{\tau_0}{12}A_X Y \quad {\rm for \: all} \; X,Y \in \Gamma(TM),
\end{equation}
satisfies $\bar{\nabla}g=0$ and $\bar{\nabla}\phi=0$, and thus $\bar{\nabla}A=0$. Extending the canonical connection $\bar{\nabla}$ to a tensor bundle $\mathcal{T}M$ yields the relation $\bar{\nabla}_X = \nabla_X - \frac{\tau_0}{12}A_{X \star}$. Here, we denote by $A_{X \star}$ in $\End \mathcal{T}M$ the natural extension of $A_X$ which is a endomorphism of the tangent bundle (cf. \cite[p.3059]{MoroianuSemmelmann}). From now on, we identify $TM$ with $T^{\ast}M$ using the metric without notice.

As stated in \cite{FKMS}, on 7-dimensional spin manifolds, there is a one-to-one correspondence between nearly parallel $\G_2$-structures and unit Killing spinors with the Killing number $\frac{\tau_0}{8}$ in the real spinor bundle up to sign. Moreover, nearly parallel $\G_2$-manifolds which are simply connected and not of constant curvature are classified into three types depending on the dimension of the space $[KS]$ of all Killing spinors:
\begin{center}
nearly parallel $\G_2$-manifolds of type 1: $\dim[KS]=1$, \\
nearly parallel $\G_2$-manifolds of type 2: $\dim[KS]=2$, \\
nearly parallel $\G_2$-manifolds of type 3: $\dim[KS]=3$.
\end{center}
We also call a manifold of type 1 a proper nearly parallel $\G_2$-manifold. For nearly parallel $\G_2$-manifolds of type 2 and type 3, independent Killing spinors constitute an Einstein-Sasakian structure (but not a 3-Sasalian structure) and a 3-Sasakian structure, respectively (cf. \cite{BFGK}). We note that on any 7-dimensional spin manifold, the complex spinor bundle $S_{1/2}$ coincides with the complexification of the real spinor bundle $\slashed{S}$. It is an important fact that the Ricci curvature satisfies $\Ric = 24\left(\frac{\tau_0}{8}\right)^2 g$ on nearly parallel $\G_2$-manifolds.

The spin connection $\nabla$ is defined as the Levi-Civita connection pulled back to the spinor bundle. Similarly, we pull back the canonical connection of a $\G_2$-structure to the spinor bundle and also denote this connection as $\bar{\nabla}$, which is explicitly expressed as
\begin{equation} \label{eq: G2 conn on spinor bundle}
\bar{\nabla}_X \zeta = \nabla_X \zeta - \frac{\tau_0}{24} (X \mathbin{\lrcorner} \phi) \cdot \zeta \quad {\rm for \: all} \; X \in \Gamma(TM), \zeta \in \Gamma(S_{1/2}).
\end{equation}

\subsection{Algebraic results on nearly parallel $\G_2$-manifolds} \label{subsec: alg results on nearly G2}
Assume that $(M,\phi,g)$ is a nearly parallel $\G_2$-manifold with the normalized scalar curvature $\scal = 42$ (i.e. $\tau_0 = 4$). We decompose some exterior bundles and the spinor bundle (for example, see \cite{Karigiannis}).

The Lie group $\G_2$ acts as the subgroup of $\SO(7)$ on the space of differential forms. The bundle $\wedge^p M$ is irreducible for $p=0,1,6,7$, but reducible for $p=2,3,4,5$. In fact, we have $\G_2$-irreducible decompositions
\begin{align*}
\wedge^2M &\cong \wedge^2_7M \oplus \wedge^2_{14}M, \\
\wedge^3M &\cong \wedge^3_1M \oplus \wedge^3_7M \oplus \wedge^3_{27}M,
\end{align*}
where $\wedge^p_dM$ is the $d$-dimensional $\G_2$-irreducible component. Because of the Hodge duality $\wedge^k M \cong \ast \wedge^{7-k}M$, the decomposition of the bundles $\wedge^4M$ and $\wedge^5M$ are obtained from the decompositions of the bundles $\wedge^3M$ and $\wedge^2M$, respectively. We remark that the canonical $\G_2$-connection $\bar{\nabla}$ preserves these decompositions. These irreducible components are characterized as follows
\begin{align*}
\wedge^2_7M &= \{ X\mathbin{\lrcorner}\phi \in \wedge^2M \mid X \in TM \} \\
&= \{ \beta \in \wedge^2M \mid \ast (\phi \wedge \beta) = -2\beta \}, \\
\wedge^2_{14}M &= \{ \beta \in \wedge^2M \mid \beta \wedge \psi = 0 \} \\
&= \{ \beta \in \wedge^2M \mid \ast (\phi \wedge \beta) = \beta \}, \\
\wedge^3_1M &= \langle \phi \rangle, \\
\wedge^3_7M &= \{ X\mathbin{\lrcorner}\psi \in \wedge^3M \mid X \in TM \} \\
&= \{ \alpha \in \wedge^3M \mid \alpha(X,Y,A_X Y)=0, \forall X,Y \in TM \}, \\
\wedge^3_{27}M &= \{ \alpha \in \wedge^3M \mid \alpha \wedge \phi = 0, \alpha \wedge \psi = 0 \}.
\end{align*}
The map $\mathbf{i}: H \mapsto -2H_{\star}\phi$ defines an isomorphism between $\Sym_0M$ and $\wedge^3_{27}M$, where $\Sym_0M$ is the trace-free part of the bundle of symmetric endomorphisms  $\Sym M$. On the other hand, the inverse map $\mathbf{j}=-8\mathbf{i}^{-1}$ is given by
\[
\mathbf{j}(\gamma)(X,Y) = \ast((X \mathbin{\lrcorner} \phi) \wedge (Y \mathbin{\lrcorner} \phi) \wedge \gamma).
\]

We denote by $\kappa_0$ the Killing spinor corresponding to the nearly parallel $\G_2$-structure $\phi$. This Killing spinor $\kappa_0$ defines the isomorphism $\sigma \mapsto \sigma \cdot \kappa_0$ from $\wedge^0M \oplus \wedge^1M$ to $\slashed{S}$. Under this decomposition, for any spinor $\zeta=f\kappa_0 + \alpha \cdot \kappa_0 \in \slashed{S}$, we write $\zeta = (f,\alpha) \in \wedge^0M \oplus \wedge^1M$. As shown in \cite{Karigiannis}, the Clifford multiplication of a 1-form $Y$ to a real spinor $(f,\alpha)$ is given by octonion multiplication,
\begin{equation} \label{eq: clifford action}
Y \cdot (f,\alpha) = (-g(Y,\alpha), fY + A_Y \alpha).
\end{equation}

We note down some algebraic relations for later use.
\begin{lemma}
The following hold for all vector fields $X,Y,Z$:
\begin{gather}
A_X A_Y Z = -g(X,Y)Z + g(X,Z)Y - \frac{1}{2}\chi(X,Y,Z), \label{eq1} \\
2A_{A_XY}Z = A_{A_YZ}X + A_{A_ZX}Y + 3g(X,Z)Y - 3g(Y,Z)X, \label{eq2} \\
(X \mathbin{\lrcorner} Y \mathbin{\lrcorner} \phi) \mathbin{\lrcorner} \phi + X \mathbin{\lrcorner} Y \mathbin{\lrcorner} \psi = - X \wedge Y, \label{eq9} \\
\phi \wedge X \wedge Y = \ast(Y \mathbin{\lrcorner} X \mathbin{\lrcorner} \psi). \label{eq12}
\end{gather}
\end{lemma}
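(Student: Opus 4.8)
The plan is to treat \eqref{eq1} as the only genuinely nontrivial algebraic input and to derive \eqref{eq2} and \eqref{eq9} formally from it, while \eqref{eq12} is a Hodge-theoretic identity not involving the $\G_2$-algebra at all. Identity \eqref{eq1} is the classical first contraction identity for the associative form, rewritten in terms of the cross product $A$: once one works at a point modelled on $(\R^7,\ \text{standard inner product},\ \phi_0)$ both sides are multilinear in $X,Y,Z$ with constant coefficients, so it suffices to verify the identity when $X,Y,Z$ run over the standard basis, and $\G_2$-equivariance of both sides cuts this down to a handful of representative triples of $e_i$'s -- a direct computation from the explicit formulas for $\phi_0$ and $\psi_0$. (The sign in front of $\chi$ is precisely where the orientation convention, opposite to Bryant's, makes itself felt.) Pairing \eqref{eq1} with a vector field $W$, using $\tfrac12 g(W,\chi(X,Y,Z)) = \psi(W,X,Y,Z)$ and the skew-symmetry of the endomorphism $A_X$ (immediate from $g(A_XY,Z) = \phi(X,Y,Z)$), one obtains the polarized form
\begin{equation*}
g(A_XY, A_ZW) = g(X,Z)g(Y,W) - g(Y,Z)g(X,W) + \psi(W,Z,X,Y).
\end{equation*}

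Identity \eqref{eq2} then follows by pairing its left-hand side with an arbitrary $W$: by cyclicity of $\phi$ and the definition of $A$ one has $g(W, A_{A_XY}Z) = \phi(W,A_XY,Z) = g(A_XY, A_ZW)$, and likewise for the remaining two terms, so the polarized identity turns $2g(W,A_{A_XY}Z) - g(W,A_{A_YZ}X) - g(W,A_{A_ZX}Y)$ into a combination of quadratic metric terms plus $2\psi(W,Z,X,Y) - \psi(W,X,Y,Z) - \psi(W,Y,Z,X)$. Since $\psi$ is a $4$-form, cyclically permuting its last three slots is an even permutation, so all three $\psi$-terms coincide and cancel, while the metric terms collapse to $g\!\left(W,\, 3g(X,Z)Y - 3g(Y,Z)X\right)$; as $W$ is arbitrary this is exactly \eqref{eq2}. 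Identity \eqref{eq9} is obtained in the same spirit: one checks $(X\mathbin{\lrcorner} Y\mathbin{\lrcorner}\phi)^{\sharp} = A_YX$, so $\bigl((X\mathbin{\lrcorner} Y\mathbin{\lrcorner}\phi)\mathbin{\lrcorner}\phi\bigr)(U,V) = g(A_YX, A_UV)$, which the polarized identity expands into metric terms plus $\psi(V,U,Y,X)$; since $(X\mathbin{\lrcorner} Y\mathbin{\lrcorner}\psi)(U,V) = \psi(Y,X,U,V)$ and this differs from $\psi(V,U,Y,X)$ by an odd permutation, the two $\psi$-contributions cancel and the surviving metric part is precisely $-(X\wedge Y)(U,V)$.

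For \eqref{eq12} one uses the standard relation between the Hodge star, the wedge product, and the interior product on the oriented Riemannian $7$-manifold, of the form $\ast(V\mathbin{\lrcorner}\omega) = \pm\, V\wedge\ast\omega$ with a sign depending only on the degree of $\omega$ and the dimension, applied first to $\omega = \psi$ (together with $\ast\psi = \phi$, which holds since $\ast\ast = \Id$ in dimension $7$) and then to $\omega = X\mathbin{\lrcorner}\psi$; after reordering the wedge factors this yields $\ast(Y\mathbin{\lrcorner} X\mathbin{\lrcorner}\psi) = \phi\wedge X\wedge Y$, and since these signs are orientation-independent the identity holds with the paper's convention. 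The main obstacle throughout is purely one of bookkeeping -- keeping the $\psi$-signs consistent in the chosen (non-Bryant) orientation -- but, as the cancellations in the derivations of \eqref{eq2} and \eqref{eq9} illustrate, these signs drop out of the final identities.
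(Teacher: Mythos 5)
Your proposal is correct. The paper itself does not prove this lemma at all -- it simply writes ``These formulas were calculated in various papers. See, for example, [Salamon--Walpuski]'' -- so you are supplying an argument where the paper gives only a citation. Your derivation is sound: the polarized form of \eqref{eq1},
\[
g(A_XY, A_ZW) = g(X,Z)g(Y,W) - g(Y,Z)g(X,W) + \psi(W,Z,X,Y),
\]
is indeed what one gets by pairing \eqref{eq1} with $W$, using $\tfrac12 g(W,\chi(X,Y,Z))=\psi(W,X,Y,Z)$, the skew-symmetry of $A_X$, and the fact that reversing four arguments of $\psi$ is an even permutation. In \eqref{eq2} the three $\psi$-terms are related by cyclic shifts of the last three slots (even) and cancel as you say, leaving exactly $3g(X,Z)Y-3g(Y,Z)X$ after contracting with $W$. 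In \eqref{eq9} the relevant permutation from $(Y,X,U,V)$ to $(V,U,Y,X)$ is a reversal composed with one extra transposition, hence odd, so the $\psi$-contributions cancel and only the metric terms -- which reassemble to $-X\wedge Y$ -- survive. For \eqref{eq12}, applying $\ast(V\wedge\alpha)=(-1)^{\deg\alpha}\,V\mathbin{\lrcorner}\ast\alpha$ twice together with $\ast\psi=\phi$ gives $\ast(\phi\wedge X\wedge Y)=Y\mathbin{\lrcorner} X\mathbin{\lrcorner}\psi$, and taking $\ast$ once more (using $\ast\ast=\Id$ in odd dimension) gives the stated identity; since $\ast$ appears an even number of times in total and $\psi=\ast\phi$ is orientation-consistent, the result is independent of the Bryant-versus-non-Bryant choice. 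The only part left schematic is the pointwise verification of \eqref{eq1} itself over representative basis triples, but that is standard and harmless; a reader wanting a fully self-contained account would simply spell out a couple of those checks.
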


These formulas were calculated in various papers. See, for example, \cite{SalamonWalpuski}.

\subsection{The curvature endomorphism} \label{subsec: curvature endo}
Let $(M,\phi,g)$ be a nearly parallel $\G_2$-manifold with $\scal = 42$. In this subsection, we consider several notions of curvatures. In particular, we introduce properties of operators of the curvature for the canonical $\G_2$-connection $\bar{\nabla}$. Note that in the remaining part of this paper, we adopt the Einstein convention of summation on the repeated subscripts.

First, recall that the definition of the basic curvature tensor:
\[
R(W,X)Y = \nabla_W \nabla_X Y - \nabla_X \nabla_W Y - \nabla_{[W,X]}Y \quad {\rm for \: all} \; X,Y \in \Gamma(TM).
\]
Replacing the Levi-Civita connection with the canonical $\G_2$-connection, we define the curvature tensor $\bar{R}$ for the connection $\bar{\nabla}$. The difference between these two curvature tensors $R$ and $\bar{R}$ is known by the following lemma.

\begin{lemma}[\cite{AlexandrovSemmelmann}] \label{lem: difference curvatures}
For any vector fields $W,X,Y$, we have
\begin{equation*}
\begin{split}
\bar{R}(W,X)Y - R(W,X)Y &= \frac{1}{9} \left\{ 2A_{A_W X}Y + A_{A_X Y}W + A_{A_Y W}X \right\} \\
&= \frac{1}{9} \left\{ 4A_{A_W X}Y -3g(W,Y)X + 3g(X,Y)W \right\}.
\end{split}
\end{equation*}
\end{lemma}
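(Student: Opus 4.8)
The plan is to compute $\bar R(W,X)Y$ directly from the defining formula \eqref{eq: G2 conn on tangent bundle}, namely $\bar\nabla_X Y = \nabla_X Y - \frac{\tau_0}{12}A_X Y$, and compare term by term with $R(W,X)Y$. Since we have normalized $\tau_0 = 4$, the correction term is $\frac{1}{3}A_X Y$, so I would write $\bar\nabla_X = \nabla_X - \frac13 A_{X\star}$ and substitute into $\bar R(W,X) = [\bar\nabla_W,\bar\nabla_X] - \bar\nabla_{[W,X]}$. Expanding, the purely-$\nabla$ part reproduces $R(W,X)Y$, the mixed terms assemble into $-\frac13\big((\nabla_W A)_X Y - (\nabla_X A)_W Y\big)$ using that $[W,X] = \nabla_W X - \nabla_X W$ cancels against the Leibniz terms in the right way, and the quadratic term is $\frac19\big(A_W A_X Y - A_X A_W Y\big)$. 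So the first task is to show the $\nabla A$ terms cancel entirely, leaving only the quadratic term; this uses the nearly parallel condition. Indeed, from $\nabla\phi = \frac{\tau_0}{4}\psi$ (Proposition 2.3(ii)) one gets a formula for $(\nabla_W A)$ in terms of $\chi$, and the antisymmetrization in $W,X$ plus the skew-symmetry properties of $\chi$ should make $(\nabla_W A)_X Y - (\nabla_X A)_W Y$ vanish — alternatively, and more cleanly, since $\bar\nabla A = 0$ one has $0 = (\bar\nabla_W A)_X Y = (\nabla_W A)_X Y - \frac13[A_W, A_{X\star}]\!\cdot\! Y$-type identities that directly express $\nabla A$ through $A\circ A$, and substituting these is what collapses the first-order part.

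Once the curvature difference is reduced to a multiple of $A_W A_X Y - A_X A_W Y$, I would invoke \eqref{eq1}: $A_X A_Y Z = -g(X,Y)Z + g(X,Z)Y - \frac12\chi(X,Y,Z)$. Antisymmetrizing in the first two slots, the metric terms give $A_W A_X Y - A_X A_W Y = g(X,Y)W - g(W,Y)X - \chi(W,X,Y)$ (the $\chi$ term doubling since $\chi$ is antisymmetric in its first two arguments). This already puts $\bar R - R$ in a recognizable form, but to reach the stated expression I must re-express $\chi$ through the $A\circ A$ terms again: from \eqref{eq1}, $\chi(W,X,Y) = -2A_W A_X Y - 2g(W,X)Y + 2g(W,Y)X$. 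Substituting back and also using the cyclic-type identity \eqref{eq2}, $2A_{A_WX}Y = A_{A_XY}W + A_{A_YW}X + 3g(W,Y)X - 3g(X,Y)W$, I can convert between the "$A_{A_WX}Y$" presentation and the "$4A_{A_WX}Y - 3g(W,Y)X + 3g(X,Y)W$" presentation claimed in the lemma — these two are equivalent precisely by \eqref{eq2}, so verifying that equivalence is one clean sub-step, and matching either one to the antisymmetrized \eqref{eq1} output is the other.

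The main obstacle I anticipate is bookkeeping the first-order ($\nabla A$) terms correctly: one must be careful that $A_{X\star}$ acts as a derivation-compatible endomorphism on $TM$ (so $\bar\nabla_W(A_X Y) = (\nabla_W A)_X Y + A_{\nabla_W X}Y + A_X\nabla_W Y - \frac13 A_W A_X Y$), and that all the $A_{\nabla_W X}$-type terms pair up against the $\bar\nabla_{[W,X]}$ term. Getting the signs and the coefficient $\frac19$ right, and confirming that the nearly parallel identity is exactly strong enough to kill the leftover $(\nabla_W A)_X Y - (\nabla_X A)_W Y$, is where the real care is needed; everything after that is algebra with \eqref{eq1} and \eqref{eq2}. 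An alternative route that avoids differentiating $A$ altogether is to quote the general formula for the curvature of a metric connection with skew torsion (here the torsion is $-\frac{\tau_0}{6}\phi$ up to identification), but since the paper works concretely I would carry out the direct substitution and use $\bar\nabla A = 0$ as the device that removes the first-order terms.
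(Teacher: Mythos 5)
This lemma is quoted in the paper from Alexandrov and Semmelmann rather than proved there, so there is no internal argument to compare against; but your proposal has a fatal error in the first-order analysis. Your expansion of $\bar R(W,X)Y=[\bar\nabla_W,\bar\nabla_X]Y-\bar\nabla_{[W,X]}Y$ with $\bar\nabla_W=\nabla_W-\tfrac13A_W$ is the right computation, and the $A_{\nabla_\bullet\bullet}$- and $A_\bullet\nabla_\bullet$-type terms do pair off against the bracket term exactly as you describe, leaving
\[
\bar R(W,X)Y-R(W,X)Y \;=\; -\tfrac13\bigl[(\nabla_WA)(X,Y)-(\nabla_XA)(W,Y)\bigr]+\tfrac19\bigl[A_WA_XY-A_XA_WY\bigr].
\]
But your claim that the $\nabla A$ bracket \emph{vanishes} is false, and dropping it loses an essential term. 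Nearly parallelness in the form $\nabla_W\phi=W\mathbin{\lrcorner}\psi$ (with $\tau_0=4$) gives $g(X,(\nabla_WA)(Y,Z))=\psi(W,X,Y,Z)$, i.e.\ $(\nabla_WA)(X,Y)=-\tfrac12\chi(W,X,Y)$; antisymmetrizing gives $(\nabla_WA)(X,Y)-(\nabla_XA)(W,Y)=-\chi(W,X,Y)$, a genuinely nonzero tangent-valued $3$-form contributing $+\tfrac13\chi(W,X,Y)$ to the curvature difference. (The alternative you mention, using $\bar\nabla A=0$, gives $(\nabla_WA)(X,Y)=\tfrac13[A_WA_XY-A_XA_WY-A_{A_WX}Y]$, which likewise is not zero; it re-expresses $\nabla A$, it does not kill it.)

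There is also a sign slip downstream: antisymmetrizing \eqref{eq1} in its first two slots gives $A_WA_XY-A_XA_WY=g(W,Y)X-g(X,Y)W-\chi(W,X,Y)$, not $g(X,Y)W-g(W,Y)X-\chi(W,X,Y)$. With both points corrected, $\bar R-R=\tfrac29\chi(W,X,Y)+\tfrac19\bigl[g(W,Y)X-g(X,Y)W\bigr]$, and the identity $\chi(W,X,Y)=2A_{A_WX}Y-2g(W,Y)X+2g(X,Y)W$ (obtained by equating the two expressions for $(\nabla_WA)(X,Y)$ above and using \eqref{eq1} on $A_WA_XY-A_XA_WY$) turns this into the stated $\tfrac19[4A_{A_WX}Y-3g(W,Y)X+3g(X,Y)W]$; the first written form of the lemma is then equivalent via \eqref{eq2}, as you noted. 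So your scheme is serviceable, but the crucial misstep is concluding that the nearly parallel condition annihilates the first-order term: it does not annihilate it, it \emph{computes} it, and that value is indispensable for the coefficients to come out.
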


Since $(M,\phi,g)$ is an Einstein manifold with the Einstein constant 6, we immediately see that the Ricci curvature for the canonical $\G_2$-connection satisfies $\overline{\Ric} = \frac{16}{3}g$. Computing with the equation $(\ref{eq1})$, $(\ref{eq2})$ and Lemma \ref{lem: difference curvatures}, we obtain a formula that can be called the 1st Bianchi identity for the canonical $\G_2$-connection $\bar{\nabla}$:
\begin{equation} \label{eq: 1st Bianchi identity G2 ver}
\bar{R}(W,X)Y + \bar{R}(X,Y)W + \bar{R}(Y,W)X = \frac{2}{3}\chi(W,X,Y).
\end{equation}
Furthermore, for any vector fields $W,X,Y,Z$, the curvature tensor $\bar{R}$ satisfies the alternations and symmetries
\[
\bar{R}(W,X,Y,Z) = -\bar{R}(X,W,Y,Z) = -\bar{R}(W,X,Z,Y) = \bar{R}(Y,Z,W,X), 
\]
where we denote $g(\bar{R}(W,X)Y,Z)$ by $\bar{R}(W,X,Y,Z)$. Next, we introduce the curvature operator $R: \wedge^2M \to \wedge^2M$:
\[
R(e_i \wedge e_j) = \frac{1}{2}R_{ijkl}e_k \wedge e_l,
\]
where $\{ e_i \}$ is a local orthonormal frame of $TM$. Let $EM$ be a vector bundle associated to the oriented orthonormal frame bundle or the spinor bundle on $(M,g)$. With the curvature operator $R$, we define the curvature endomorphism $q(R) = q_E(R) \in \End EM$ as
\begin{equation*}
q(R) = \frac{1}{2}(e_i \wedge e_j)_{\star}R(e_i \wedge e_j)_{\star}.
\end{equation*}
Similarly, we define the curvature endomorphism $q(\bar{R})$ for the canonical $\G_2$-connection. In particular, on the space of 1-forms, we have $q_T(R) = \Ric =6\Id$ and $q_T(\bar{R}) = \overline{\Ric} = \frac{16}{3}\Id$. An important property of the curvature endomorphism $q(\bar{R})$ is that it preserves all tensor bundles associated to $\G_2$-representations. By noting the isomorphism $\wedge^2_{14}\mathbb{R}^7 \cong \mathfrak{g}_2$, we can show this property as in \cite[p. 3061]{MoroianuSemmelmann}.

\subsection{The action of cross product} \label{subsec: action of cross product}
As we have already mentioned lightly in Subsection \ref{subsec: nearly parallel G_2},  for any endomorphism or 2-tensor $B$ in $\End TM \cong T^{\ast}M \otimes TM$, we denote by $B_{\star}$ the natural extension on the tensor bundle $\mathcal{T}M$. In particular, the induced action of $B$ on $p$-form $u$ is written as
\[
B_{\star}u = -B^{\ast}(e_i) \wedge e_i \mathbin{\lrcorner} u,
\] 
where $B^{\ast}$ is the metric adjoint of $B$ and $\{ e_i \}$ is a local orthonormal frame of $TM$. Let $w$ and $H$ be elements of $\wedge^2_{14}M$ and $\Sym_0M$, respectively, and $A$ be the cross product defined in Subsection \ref{subsec: nearly parallel G_2}. Then, for any tangent vector $X$, we have 
\begin{gather} 
\ast(H_{\star}\phi) = -H_{\star}\psi, \quad \ast(H_{\star}\psi) = -H_{\star}\phi, \label{eq: sym action to associative} \\
A_{X \star}\phi = 3X \mathbin{\lrcorner} \psi, \quad A_{X \star}\psi = -3 X \wedge \phi, \label{eq: cross product action to associative} \\
w_{\star}\phi = 0 \label{eq: wedge^2_14 action to associative}.
\end{gather}
\begin{proof}
First, we note that $H_{\star}\phi$ is in $\wedge^3_{27}M$. For any element $\alpha$ of $\wedge^3_{27}M$, we get
\begin{equation*}
\begin{split}
\langle \alpha, H_{\star}\phi \rangle \vol &= -\langle \alpha, He_i \wedge e_i \mathbin{\lrcorner} \phi \rangle \vol = -\langle e_i \wedge He_i \mathbin{\lrcorner} \alpha, \phi \rangle \vol \\
&= \langle H_{\star}\alpha, \phi \rangle \vol = (H_{\star}\alpha) \wedge \ast\phi = H_{\star}(\alpha \wedge \psi) - \alpha \wedge (H_{\star}\psi) \\
&= -\alpha \wedge \ast^2(H_{\star}\psi) = \langle \alpha, -\ast(H_{\star}\psi) \rangle \vol.
\end{split}
\end{equation*}
Next, we show the first formula of $(\ref{eq: cross product action to associative})$. For any tangent vectors $Y,Z,W$, we have
\begin{eqnarray*}
(A_{X \star}\phi)(Y,Z,W) &=& - g(A_X Y,A_Z W) - g(Y,A_{A_X Z}W) - g(Y,A_Z A_X W) \\
&=& g(Y, A_X A_Z W + A_W A_X Z -A_Z A_X W) \\
&\overset{(\ref{eq2})}{=}& -3g(Y, A_Z A_X W) + 3g(Y,X)g(W,Z) - 3g(Y,W)g(X,Z) \\
&\overset{(\ref{eq1})}{=}& -3g \left(Y, -g(Z,X)W + g(Z,W)X - \frac{1}{2}\chi(Z,X,W) \right) \\
&& + 3g(Y,X)g(W,Z) - 3g(Y,W)g(X,Z) \\
&=& 3\psi(Y,Z,X,W) = 3X \mathbin{\lrcorner} \psi(Y,Z,W).
\end{eqnarray*}
The second formula of $(\ref{eq: cross product action to associative})$ follows from the fact that the equation $\ast(A_{X \star}\phi) = A_{X \star}\psi$ holds. \\
Finally, since the isomorphism $\wedge^2_{14}\mathbb{R}^7 \cong \mathfrak{g}_2$ holds, we know that $w_{\star}\phi$ is in $\wedge^3_1M$. Thus $w_{\star}\phi \in \wedge^3_1M$ is expressed as $a\phi$ by using some constant $a$. We just need to substitute the basis and show the constant $a=0$.
\end{proof}

By applying Schur's Lemma to $\G_2$-decompositions, we get some relations.

\begin{lemma}
The following relations hold:
\begin{eqnarray}
e_i \wedge (A_{e_i \star}w) &=& 0 \quad {\rm for \: all} \; w \in \wedge^2_{14}M, \label{Schur w} \\
e_i \mathbin{\lrcorner} (A_{e_i \star}w) &=& 0 \quad {\rm for \: all} \; w \in \wedge^2_{14}M, \label{Schur w'} \\
(A_{e_i \star}H)(e_i) &=& 0 \quad {\rm for \: all} \; H \in \Sym_0 M, \label{Schur H} \\
e_i \mathbin{\lrcorner} (A_{e_i \star}\gamma) &=& 0 \quad {\rm for \: all} \; \gamma \in \wedge^3_{27} M. \label{Schur gamma}
\end{eqnarray}
\end{lemma}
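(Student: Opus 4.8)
The plan is to prove all four relations \eqref{Schur w}--\eqref{Schur gamma} by one and the same mechanism, which realizes the Schur's Lemma argument announced just above the statement: each left-hand side is, at every point, a $\G_2$-equivariant linear map from an irreducible $\G_2$-module into a $\G_2$-module that has no constituent isomorphic to it, and therefore vanishes identically.

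First I would make the equivariance explicit. The cross product $A$ is determined by the $\G_2$-invariant $3$-form $\phi$, so both $A$ and its natural extension $B \mapsto B_{\star}$ to $\mathcal{T}M$ are $\G_2$-equivariant; moreover, summing $e_i \wedge (\,\cdot\,)$, $e_i \mathbin{\lrcorner} (\,\cdot\,)$, or the evaluation $(\,\cdot\,)(e_i)$ over a local orthonormal frame $\{e_i\}$ is nothing but contraction against the $\G_2$-invariant metric. Hence each of the four left-hand sides of \eqref{Schur w}--\eqref{Schur gamma} defines a $\G_2$-equivariant bundle homomorphism, and it suffices to check that it vanishes on a single fibre, i.e.\ as a homomorphism of $\G_2$-representations.

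Next I would invoke the $\G_2$-decompositions recalled in Subsection \ref{subsec: alg results on nearly G2}. The irreducible modules in play, with their dimensions, are: $\wedge^3_1 M$ (dimension $1$); $TM \cong \wedge^1 M$, $\wedge^2_7 M$, $\wedge^3_7 M$ (each of dimension $7$); $\wedge^2_{14} M \cong \mathfrak{g}_2$ (dimension $14$); and $\wedge^3_{27} M \cong \Sym_0 M$ (dimension $27$). Since $1, 7, 14, 27$ are pairwise distinct, these modules are pairwise non-isomorphic, and $\Hom_{\G_2}$ between two non-isomorphic real irreducibles is zero. Schur's Lemma then annihilates each map: in \eqref{Schur w} the irreducible domain $\wedge^2_{14} M$ maps into $\wedge^3 M = \wedge^3_1 M \oplus \wedge^3_7 M \oplus \wedge^3_{27} M$, whose summands have dimensions $1, 7, 27$; in \eqref{Schur w'} the target $\wedge^1 M$ has dimension $7 \neq 14$; in \eqref{Schur H} the irreducible domain $\Sym_0 M$ (dimension $27$) maps into $TM$ (dimension $7$); and in \eqref{Schur gamma} the irreducible domain $\wedge^3_{27} M$ (dimension $27$) maps into $\wedge^2 M = \wedge^2_7 M \oplus \wedge^2_{14} M$ (dimensions $7$ and $14$). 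In each case the target shares no irreducible constituent with the domain, so the map is identically zero.

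I do not anticipate a genuine obstacle here. The two points that merit attention are: (i) spelling out carefully why the maps are $\G_2$-equivariant, which is immediate once one observes that they are assembled solely from $\phi$ (equivalently $A$) and $g$; and (ii) the remark that over $\mathbb{R}$ the vanishing of $\Hom_{\G_2}$ between non-isomorphic irreducibles holds without any division-algebra caveat, so the bare comparison of dimensions already suffices. As an alternative, each identity could instead be checked by a direct computation using \eqref{eq1}, \eqref{eq2} together with the explicit characterizations of $\wedge^2_{14} M$ and $\wedge^3_{27} M$, in the spirit of the proof of \eqref{eq: sym action to associative}--\eqref{eq: cross product action to associative}; but the representation-theoretic route is shorter and is the natural companion to that proof.
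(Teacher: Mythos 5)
Your proposal is correct and is precisely the Schur's Lemma argument that the paper invokes in the sentence immediately preceding the lemma (for which the paper gives no further proof): each left-hand side is a $\G_2$-equivariant bundle map out of an irreducible summand into a bundle sharing no isomorphic constituent, so it must vanish. Your dimension count ($14$ against $\{1,7,27\}$; $14$ against $7$; $27$ against $7$; $27$ against $\{7,14\}$) is exactly the check needed, and your remark that the non-isomorphic case of Schur's Lemma holds over $\mathbb{R}$ without a division-algebra caveat is the right level of care.
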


The following, Proposition \ref{prop: cross product action1} and Proposition \ref{prop: cross product action2}, work essentially in proving Theorem \ref{thmA} and Theorem \ref{thmB}. Before that, we introduce the lemma.

\begin{lemma}[\cite{AlexandrovSemmelmann}]
Let $H$ be a section of $\Sym_0 M$. A section $\gamma$ is defined by $\gamma = \mathbf{i}(H) = -2H_{\star}\phi$. Then we have 
\begin{equation} \label{eq: A_{e_i}gamma}
A_{e_i \star}\bar{\nabla}_{e_i}\gamma = -3 (\ast \bar{d}\gamma)_{\wedge^3_7} + (\ast \bar{d}\gamma)_{\wedge^3_{27}} = 3\delta H \mathbin{\lrcorner} \psi + (\ast d\gamma)_{\wedge^3_{27}} + \frac{2}{3}\gamma.
\end{equation}
\end{lemma}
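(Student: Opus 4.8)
The plan is to read every quantity appearing in $(\ref{eq: A_{e_i}gamma})$ as a $\G_2$-equivariant algebraic expression in the single tensor $\bar\nabla\gamma$, to use that $\bar\nabla$ is a $\G_2$-connection in order to control which $\G_2$-types can occur, and to pin down the few remaining universal constants with the identities already recorded (together with one explicit model tensor wherever a number is genuinely needed).

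First, since $\bar\nabla\phi=0$, $\bar\nabla A=0$ and $\bar\nabla$ preserves $\Sym_0 M$, differentiating $\gamma=-2H_\star\phi$ gives $\bar\nabla_{e_i}\gamma=-2(\bar\nabla_{e_i}H)_\star\phi$, which for each $i$ lies in $\wedge^3_{27}M$; thus $\bar\nabla\gamma$ is a section of $TM\otimes\wedge^3_{27}M$. I would decompose this bundle into its $\G_2$-irreducible summands -- one copy each of the $\mathbf 7$-, $\mathbf{14}$-, $\mathbf{27}$-, $\mathbf{64}$- and $\mathbf{77}$-dimensional types -- and note that each of the natural first-order operators $\gamma\mapsto\bar d\gamma=e_i\wedge\bar\nabla_{e_i}\gamma\in\wedge^4M$, $\gamma\mapsto\bar\delta\gamma=-e_i\mathbin{\lrcorner}\bar\nabla_{e_i}\gamma\in\wedge^2M$ and $\gamma\mapsto A_{e_i\star}\bar\nabla_{e_i}\gamma\in\wedge^3M$ is non-zero only on those summands whose type occurs in the target. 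Since $\wedge^3M\cong\wedge^4M\cong\mathbf 1\oplus\mathbf 7\oplus\mathbf{27}$ and $\wedge^2M\cong\mathbf 7\oplus\mathbf{14}$, all multiplicities being one, the operators $\bar d$ and $A_{e_i\star}\bar\nabla_{e_i}$ both see exactly the $\mathbf 7$- and $\mathbf{27}$-parts of $\bar\nabla\gamma$, while $\bar\delta$ sees its $\mathbf 7$- and $\mathbf{14}$-parts. Hence $A_{e_i\star}\bar\nabla_{e_i}\gamma$ and $\ast\bar d\gamma$ are linear in the same two isotypic components of $\bar\nabla\gamma$, and Schur's lemma yields universal constants $\lambda,\mu$ with $A_{e_i\star}\bar\nabla_{e_i}\gamma=\lambda(\ast\bar d\gamma)_{\wedge^3_7}+\mu(\ast\bar d\gamma)_{\wedge^3_{27}}$. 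To evaluate them I would expand $A_{e_i\star}((\bar\nabla_{e_i}H)_\star\phi)$ using the derivation property of $A_{X\star}$ and of $K_\star$ on the tensor algebra (for a symmetric trace-free $K$) together with $(\ref{eq: cross product action to associative})$ and $(\ref{eq: sym action to associative})$: this writes $A_{X\star}(K_\star\phi)$ as a sum of a $\wedge^3_{27}$-valued term built from the symmetric trace-free endomorphism $A_X K-KA_X$, plus terms involving $X\mathbin{\lrcorner}\psi$ and $\ast(K_\star\phi)$. Summing over the frame, collapsing the contracted terms by $(\ref{Schur H})$--$(\ref{Schur gamma})$, and comparing $\G_2$-parts with $\ast\bar d\gamma$, a single model tensor adapted to the standard $\phi_0,\psi_0$ forces $\lambda=-3$ and $\mu=1$; this is the first equality.

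For the second equality I would put $\tau_0=4$ in $(\ref{eq: G2 conn on tangent bundle})$, so $\bar\nabla_X=\nabla_X-\tfrac13 A_{X\star}$ on all tensors and hence $\bar d\gamma=d\gamma-\tfrac13\,e_i\wedge(A_{e_i\star}\gamma)$. The map $\gamma\mapsto e_i\wedge(A_{e_i\star}\gamma)$ is $\G_2$-equivariant $\wedge^3_{27}M\to\wedge^4M$, and since $\wedge^4M$ contains the type $\mathbf{27}$ only as $\wedge^4_{27}M$ it is a constant multiple of $\ast\gamma$; using $e_i\wedge A(e_i,e_j)=-2\,e_j\mathbin{\lrcorner}\phi$ (immediate from $g(X,A(Y,Z))=\phi(X,Y,Z)$) one rewrites it as $-2(e_j\mathbin{\lrcorner}\phi)\wedge(e_j\mathbin{\lrcorner}\gamma)$ and a model computation gives the value $-2$, whence $\bar d\gamma=d\gamma+\tfrac23\ast\gamma$ and therefore $\ast\bar d\gamma=\ast d\gamma+\tfrac23\gamma$ (using $\ast\ast=\Id$ on $3$-forms in dimension $7$). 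Separating $\G_2$-types gives $(\ast\bar d\gamma)_{\wedge^3_{27}}=(\ast d\gamma)_{\wedge^3_{27}}+\tfrac23\gamma$, and since $\gamma\in\wedge^3_{27}M$ has no $\wedge^3_7$-component, $(\ast\bar d\gamma)_{\wedge^3_7}=(\ast d\gamma)_{\wedge^3_7}$. It remains to identify $(\ast\bar d\gamma)_{\wedge^3_7}=-\delta H\mathbin{\lrcorner}\psi$; this is the $\mathbf 7$-part of the Schur identity above, whose content is that the relevant contraction of $\bar\nabla\gamma$ equals $\sum_i(\bar\nabla_{e_i}H)(e_i)=\sum_i(\nabla_{e_i}H)(e_i)=-\delta H$, the middle equality being exactly $(\ref{Schur H})$ -- so $(\ast\bar d\gamma)_{\wedge^3_7}$ is a constant multiple of $\delta H\mathbin{\lrcorner}\psi$, the constant fixed once more on a model. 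Substituting the two components into $-3(\ast\bar d\gamma)_{\wedge^3_7}+(\ast\bar d\gamma)_{\wedge^3_{27}}$ produces $3\delta H\mathbin{\lrcorner}\psi+(\ast d\gamma)_{\wedge^3_{27}}+\tfrac23\gamma$, as claimed.

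I expect the main obstacle to be the consistent bookkeeping of the four universal constants ($\lambda=-3$, $\mu=1$, the $-2$ in $e_i\wedge(A_{e_i\star}\gamma)=-2\ast\gamma$, and the $\mathbf 7$-normalization) against the explicit forms of $\phi_0,\psi_0$ and the orientation convention fixed in Subsection~\ref{subsec: alg results on nearly G2}: this means carefully tracking the Leibniz rules for $A_{X\star}$ and $K_\star$ acting on $\phi$ and $\psi$ through $(\ref{eq: sym action to associative})$--$(\ref{eq: wedge^2_14 action to associative})$ and the Schur relations $(\ref{Schur w})$--$(\ref{Schur gamma})$, together with the routine but sign-sensitive manipulations with $\ast$, $d$ and the codifferential. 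One must also check that the $\mathbf 7$- and $\mathbf{27}$-isotypic parts of $\bar\nabla\gamma$ are generically non-zero and mapped non-trivially by these operators -- which holds because $H$, hence $\bar\nabla H$, is pointwise unconstrained -- so that the Schur constants are well defined; the representation theory by itself only guarantees that the answer has the stated shape.
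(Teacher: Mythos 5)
The paper offers no proof of this lemma---it is quoted directly from Alexandrov--Semmelmann---so there is no argument in the text against which to compare yours line by line. What the paper does prove nearby (Propositions \ref{prop: cross product action1} and \ref{prop: cross product action2}) is argued by explicit index manipulations built on the algebraic identities of Subsection \ref{subsec: action of cross product}, and the cited reference works in the same computational style.

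Your route via Schur's lemma is a genuinely different and more structural one: decomposing $T^\ast M\otimes\wedge^3_{27}M$ into its five $\G_2$-irreducible summands and matching the $\mathbf 7$- and $\mathbf{27}$-parts of the two equivariant contractions is correct, and it explains transparently why no $\wedge^3_1$-component occurs and why exactly two universal constants are in play. You also correctly recognize the second equality as an independent algebraic fact, namely $\bar d\gamma=d\gamma+\tfrac23\ast\gamma$ together with the $\mathbf 7$-normalization. The gap is that all four normalization constants ($\lambda=-3$, $\mu=1$, the factor $-2$ in $e_i\wedge A_{e_i\star}\gamma=-2\ast\gamma$, and $(\ast\bar d\gamma)_{\wedge^3_7}=-\delta H\mathbin{\lrcorner}\psi$) are deferred to unperformed ``model computations.'' Once the representation-theoretic shape is fixed, these constants are the entire content of the lemma, and they depend sensitively on the orientation and contraction conventions set in Subsection \ref{subsec: alg results on nearly G2}; they must actually be carried out, ideally on a test $H$ whose image under $\ast\bar d\circ\mathbf{i}$ has both $\mathbf 7$- and $\mathbf{27}$-pieces non-zero. (The values you state are at least consistent with how $(\ref{eq: A_{e_i}gamma})$ is used in the body of the proof of Proposition \ref{prop: cross product action1}.) You also flag, correctly, that the Schur argument requires the $\mathbf 7$- and $\mathbf{27}$-component maps of $\ast\bar d$ on $T^\ast M\otimes\wedge^3_{27}M$ to be non-zero; that too must be checked before the constants are even well defined. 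As a plan this is workable and arguably cleaner than brute force, but as written it is a scaffold, not a proof.
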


\begin{proposition} \label{prop: cross product action1}
Let $w$ and $H$ be sections of $\wedge^2_{14}M$ and $\Sym_0 M$, respectively. A section $\gamma$ is defined by $\gamma = \mathbf{i}(H) = -2H_{\star}\phi$. Then we have
\begin{align}
A_{e_i \star}\bar{\nabla}_{e_i}w &= \delta w \mathbin{\lrcorner} \phi, \label{cross product action1 wedge^2_{14}} \\
\mathbf{i}(A_{e_i \star}\bar{\nabla}_{e_i}H) &= -2(\ast \bar{d}\gamma)_{\wedge^3_{27}} = -2(\ast d\gamma)_{\wedge^3_{27}} - \frac{4}{3}\gamma \label{cross product action1 Sym_0}
\end{align} 
\end{proposition}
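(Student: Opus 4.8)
The plan is to reduce both identities to the cited relation $(\ref{eq: A_{e_i}gamma})$, the Schur-type vanishings $(\ref{Schur w'})$ and $(\ref{Schur H})$, and the algebraic formulas $(\ref{eq: sym action to associative})$–$(\ref{eq: cross product action to associative})$; the discrepancy $\bar\nabla_X=\nabla_X-\frac{1}{3}A_{X\star}$ (recall $\tau_0=4$) enters only through $\bar\nabla\phi=\bar\nabla\psi=0$ and through those vanishings.

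For $(\ref{cross product action1 wedge^2_{14}})$ I would first prove the pointwise identity $A_{X\star}w=\ast\bigl((X\mathbin{\lrcorner}\psi)\wedge w\bigr)$ for $w\in\wedge^2_{14}M$. Since $A_X$ is skew-symmetric, $A_{X\star}$ commutes with $\ast$; applying it to the relation $\phi\wedge w=\ast w$ (equivalent to $\ast(\phi\wedge w)=w$) and using the derivation rule, $A_{X\star}\phi=3X\mathbin{\lrcorner}\psi$ from $(\ref{eq: cross product action to associative})$, the identity $\phi\wedge\beta=-2\ast\beta$ for $\beta\in\wedge^2_7M$, and the fact that $A_{X\star}w\in\wedge^2_7M$ — which holds because $A_X$ is the endomorphism with $2$-form $X\mathbin{\lrcorner}\phi\in\wedge^2_7M$, so $A_{X\star}w$ is the Lie bracket $[A_X,w]$ in $\mathfrak{so}(7)=\mathfrak{g}_2\oplus\wedge^2_7$ and $[\mathfrak{g}_2,\wedge^2_7]\subseteq\wedge^2_7$ — one gets $3\ast(A_{X\star}w)=3(X\mathbin{\lrcorner}\psi)\wedge w$. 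Next, $\psi\wedge w=0$ together with $\bar\nabla\psi=0$ gives $\psi\wedge\bar\nabla_{e_i}w=\bar\nabla_{e_i}(\psi\wedge w)=0$; contracting with $e_i$, summing over $i$, and using $\sum_i e_i\mathbin{\lrcorner}\bar\nabla_{e_i}w=\sum_i e_i\mathbin{\lrcorner}\nabla_{e_i}w=-\delta w$ (the cross-product term drops by $(\ref{Schur w'})$), one obtains $\sum_i(e_i\mathbin{\lrcorner}\psi)\wedge\bar\nabla_{e_i}w=\psi\wedge\delta w$. Applying $\ast$ and the Hodge identity $\ast(\psi\wedge\alpha)=\alpha\mathbin{\lrcorner}\phi$ for a $1$-form $\alpha$ (which uses $\ast\psi=\phi$, valid in dimension $7$) yields $A_{e_i\star}\bar\nabla_{e_i}w=\ast(\psi\wedge\delta w)=\delta w\mathbin{\lrcorner}\phi$.

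For $(\ref{cross product action1 Sym_0})$, write $\bar\nabla_{e_i}\gamma=\mathbf{i}(\bar\nabla_{e_i}H)=-2(\bar\nabla_{e_i}H)_\star\phi$ (using $\bar\nabla\phi=0$). The key point is that $A_{e_i\star}$ is $\SO(7)$-equivariant but not $\G_2$-equivariant, hence does not commute with $\mathbf{i}$; instead, the derivation calculus of the natural extension gives the operator identity $[A_{e_i\star},(\bar\nabla_{e_i}H)_\star]=(A_{e_i\star}\bar\nabla_{e_i}H)_\star$, and then, since $A_{e_i\star}\phi=3e_i\mathbin{\lrcorner}\psi$ and $A_{e_i\star}\bar\nabla_{e_i}H=[A_{e_i},\bar\nabla_{e_i}H]\in\Sym_0M$,
\[
A_{e_i\star}\bar\nabla_{e_i}\gamma=-6(\bar\nabla_{e_i}H)_\star(e_i\mathbin{\lrcorner}\psi)+\mathbf{i}(A_{e_i\star}\bar\nabla_{e_i}H).
\]
To handle the correction term I would use $B_\star(e_i\mathbin{\lrcorner}\psi)=(Be_i)\mathbin{\lrcorner}\psi+e_i\mathbin{\lrcorner}(B_\star\psi)$ with $B=\bar\nabla_{e_i}H$, together with $\sum_i(\bar\nabla_{e_i}H)e_i=-\delta H$ (the cross-product term drops by $(\ref{Schur H})$) and $(\bar\nabla_{e_i}H)_\star\psi=\frac{1}{2}\ast\bar\nabla_{e_i}\gamma$ — the latter from $H_\star\psi=-\ast(H_\star\phi)=\frac{1}{2}\ast\gamma$ by $(\ref{eq: sym action to associative})$ and $\bar\nabla\psi=0$ — obtaining $\sum_i(\bar\nabla_{e_i}H)_\star(e_i\mathbin{\lrcorner}\psi)=-\delta H\mathbin{\lrcorner}\psi-\frac{1}{2}\ast\bar d\gamma$. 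Substituting this and invoking $(\ref{eq: A_{e_i}gamma})$ — whose $\wedge^3_7$-component forces $(\ast\bar d\gamma)_{\wedge^3_7}=-\delta H\mathbin{\lrcorner}\psi$, while $(\ast\bar d\gamma)_{\wedge^3_1}=0$ because $\langle\bar d\gamma,\psi\rangle=\langle\bar\nabla_{e_i}\gamma,e_i\mathbin{\lrcorner}\psi\rangle=0$ — the $\delta H\mathbin{\lrcorner}\psi$ terms cancel and one is left with $\mathbf{i}(A_{e_i\star}\bar\nabla_{e_i}H)=-2(\ast\bar d\gamma)_{\wedge^3_{27}}$; the second equality of $(\ref{cross product action1 Sym_0})$ then follows from the $\wedge^3_{27}$-component of $(\ref{eq: A_{e_i}gamma})$, namely $(\ast\bar d\gamma)_{\wedge^3_{27}}=(\ast d\gamma)_{\wedge^3_{27}}+\frac{2}{3}\gamma$.

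I expect the main obstacle to be bookkeeping rather than conceptual: keeping all Hodge-star and degree signs consistent with the paper's (non-standard) orientation convention, and exploiting the precise shape of $(\ref{eq: A_{e_i}gamma})$ — in particular the $\wedge^3_7$-part — so that the spurious $\delta H\mathbin{\lrcorner}\psi$ contributions cancel in $(\ref{cross product action1 Sym_0})$. The representation-theoretic ingredients ($[\mathfrak{g}_2,\wedge^2_7]\subseteq\wedge^2_7$ and the derivation property of natural extensions of endomorphisms) are standard.
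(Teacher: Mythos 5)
Your proof is correct. For the second identity $(\ref{cross product action1 Sym_0})$ your route is essentially the same as the paper's: both expand $-2\bigl(A_{e_i\star}\bar\nabla_{e_i}H\bigr)_\star\phi$ via the commutator $[A_{e_i\star},(\bar\nabla_{e_i}H)_\star]=(A_{e_i\star}\bar\nabla_{e_i}H)_\star$, reduce the correction term $(\bar\nabla_{e_i}H)_\star(e_i\mathbin{\lrcorner}\psi)$ to $-\delta H\mathbin{\lrcorner}\psi-\tfrac12\ast\bar d\gamma$ using $(\ref{eq: sym action to associative})$ and $(\ref{Schur H})$, and then invoke $(\ref{eq: A_{e_i}gamma})$ to cancel the $\delta H\mathbin{\lrcorner}\psi$ contributions. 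Your explicit note that $(\ast\bar d\gamma)_{\wedge^3_1}=0$ (because $\bar\nabla_{e_i}\gamma\in\Omega^3_{27}M$ is orthogonal to $e_i\mathbin{\lrcorner}\psi\in\Omega^3_{7}M$) is a useful clarification of a step the paper leaves implicit.

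For the first identity $(\ref{cross product action1 wedge^2_{14}})$ your argument is genuinely different from the paper's. The paper computes $(A_{e_i\star}\bar\nabla_{e_i}w)(X)$ directly from the definition of the extension, rewrites the result as $(\bar\nabla_{e_i}w)e_j\wedge(e_i\mathbin{\lrcorner}e_j\mathbin{\lrcorner}\phi)$, and then clears it out with $(\ref{Schur w'})$ and $w_\star\phi=0$. You instead first establish the pointwise algebraic formula $A_{X\star}w=\ast\bigl((X\mathbin{\lrcorner}\psi)\wedge w\bigr)$ for $w\in\wedge^2_{14}M$, by applying the derivation $A_{X\star}$ (which commutes with $\ast$ since $A_X\in\mathfrak{so}(7)$) to the characterization $\ast(\phi\wedge w)=w$, using $A_{X\star}\phi=3X\mathbin{\lrcorner}\psi$, and using that $A_{X\star}w\in\wedge^2_7M$ as $[\wedge^2_7,\mathfrak{g}_2]\subset\wedge^2_7$; you then close the loop via the other characterization $\psi\wedge w=0$, the contraction trick, and the Hodge identity $\ast(\psi\wedge\alpha)=\alpha\mathbin{\lrcorner}\phi$. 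This route produces a reusable intermediate identity and shifts the representation-theoretic content to the bracket statement $[\mathfrak{g}_2,\wedge^2_7]\subset\wedge^2_7$, which is arguably more transparent than the paper's direct index manipulation; the paper's computation is more self-contained and avoids invoking the reductive decomposition of $\mathfrak{so}(7)$.
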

\begin{proof}
For any vector field $X$, we get
\begin{eqnarray*}
(A_{e_i \star}\bar{\nabla}_{e_i}w)(X) &=& A_{e_i} (\bar{\nabla}_{e_i}w)(X) - (\bar{\nabla}_{e_i}w)(A_{e_i}X) \\
&=& (\bar{\nabla}_{e_i}w)(X) \mathbin{\lrcorner} e_i \mathbin{\lrcorner} \phi - (\bar{\nabla}_{e_i}w)(X \mathbin{\lrcorner} e_i \mathbin{\lrcorner} \phi) \\
&=& (\bar{\nabla}_{e_i}w)(e_j,X)e_i \mathbin{\lrcorner} e_j \mathbin{\lrcorner} \phi - \phi(e_j,e_i,X)(\bar{\nabla}_{e_i}w)e_j \\
&=& ((\bar{\nabla}_{e_i}w)e_j \wedge (e_i \mathbin{\lrcorner} e_j \mathbin{\lrcorner} \phi))(X).
\end{eqnarray*}
Thus, we have
\[
A_{e_i \star}\bar{\nabla}_{e_i}w = (\bar{\nabla}_{e_i}w)e_j \wedge (e_i \mathbin{\lrcorner} e_j \mathbin{\lrcorner} \phi).
\]
We proceed further with the calculation.
\begin{eqnarray*}
A_{e_i \star}\bar{\nabla}_{e_i}w &=& -e_i \mathbin{\lrcorner} ((\bar{\nabla}_{e_i}w)e_j \wedge e_j \mathbin{\lrcorner} \phi) + (e_i \mathbin{\lrcorner} (\bar{\nabla}_{e_i}w)e_j)e_j \mathbin{\lrcorner} \phi \\
&\overset{(\ref{Schur w'})}{=}& -e_i \mathbin{\lrcorner} \bar{\nabla}_{e_i}(w(e_j) \wedge e_j \mathbin{\lrcorner} \phi) + \delta w \mathbin{\lrcorner} \phi \\
&=& -e_i \mathbin{\lrcorner} \bar{\nabla}_{e_i}(w_{\star}\phi) + \delta w \mathbin{\lrcorner} \phi \\
&\overset{(\ref{eq: wedge^2_14 action to associative})}{=}& \delta w \mathbin{\lrcorner} \phi.
\end{eqnarray*}
Next, we use the isomorphism $\mathbf{i}: \Sym_0 M \to \wedge^3_{27}M$ defined in Subsection \ref{subsec: alg results on nearly G2}.
\begin{eqnarray*}
\mathbf{i}(A_{e_i \star}\bar{\nabla}_{e_i}H) &=& -2(A_{e_i \star}\bar{\nabla}_{e_i}H)_{\star}\phi \\
&=& -2A_{e_i \star}((\bar{\nabla}_{e_i}H)_{\star}\phi) + 2(\bar{\nabla}_{e_i}H)_{\star}(A_{e_i \star}\phi) \\
&\overset{(\ref{eq: cross product action to associative})}{=}& -2A_{e_i \star}\bar{\nabla}_{e_i}(H_{\star}\phi) + 6(\bar{\nabla}_{e_i}H)_{\star}(e_i \mathbin{\lrcorner} \psi) \\
&=& A_{e_i \star}\bar{\nabla}_{e_i}\gamma + 6(\bar{\nabla}_{e_i}H)e_i \mathbin{\lrcorner} \psi + 6e_i \mathbin{\lrcorner} \bar{\nabla}_{e_i}(H_{\star}\psi) \\
&\overset{(\ref{eq: sym action to associative}),(\ref{Schur H})}{=}& A_{e_i \star}\bar{\nabla}_{e_i}\gamma - 6\delta H \mathbin{\lrcorner} \psi + 3e_i \mathbin{\lrcorner} \bar{\nabla}_{e_i}(\ast \gamma) \\
&=& A_{e_i \star}\bar{\nabla}_{e_i}\gamma - 6\delta H \mathbin{\lrcorner} \psi - 3\ast \bar{d}\gamma \\
&\overset{(\ref{eq: A_{e_i}gamma})}{=}& 3\delta H \mathbin{\lrcorner} \psi + (\ast \bar{d}\gamma)_{\wedge^3_{27}} - 6\delta H \mathbin{\lrcorner} \psi - 3(\ast \bar{d}\gamma)_{\wedge^3_7} - 3(\ast \bar{d}\gamma)_{\wedge^3_{27}} \\
&=& -2(\ast \bar{d}\gamma)_{\wedge^3_{27}} = -2(\ast d\gamma)_{\wedge^3_{27}} - \frac{4}{3}\gamma.
\end{eqnarray*}
\end{proof}

We consider the natural extension $A_{X \star}$ of the cross product $A_X$ to 2-tensor fields in Proposition \ref{prop: cross product action1}. This extension $A_{X \star}$ is expressed for a 2-tensor field $\alpha \otimes \beta \in \Gamma(TM \otimes TM)$ as
\[
A_{X \star}(\alpha \otimes \beta) = A_X\alpha \otimes \beta + \alpha \otimes A_X \beta.
\]
Now, we introduce another extension $\widetilde{A_{X \star}}$ of the cross product $A_X$ to 2-tensor fields as follows:
\[
\widetilde{A_{X \star}}(\alpha \otimes \beta) = A_X\alpha \otimes \beta - \alpha \otimes A_X \beta.
\]

\begin{proposition} \label{prop: cross product action2}
Let $w$, $H$, and $\gamma$ be sections satisfying the assumptions of Proposition \ref{prop: cross product action1}. Then we have
\begin{align}
\widetilde{A_{e_i \star}}\bar{\nabla}_{e_i}H &= \frac{1}{2} \bar{\delta} \gamma - \delta H \mathbin{\lrcorner} \phi = -\frac{1}{3}\delta H \mathbin{\lrcorner} \phi + \frac{1}{2}(\delta\gamma)_{\wedge^2_{14}}, \label{cross product action2 Sym_0} \\
\mathbf{i}(\widetilde{A_{e_i \star}}\bar{\nabla}_{e_i}w) &= 8dw - 2\delta w \mathbin{\lrcorner} \psi. \label{cross product action2 wedge^2_{14}}
\end{align}
\end{proposition}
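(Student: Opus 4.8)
The plan is to prove both identities by direct computation, in close parallel with the proof of Proposition~\ref{prop: cross product action1}, simply trading the symmetric extension $A_{X\star}$ for the antisymmetrized one $\widetilde{A_{X\star}}$. The first step is a convenient reinterpretation of $\widetilde{A_{X\star}}$: identifying a $2$-tensor with an endomorphism via $g$, one reads off directly from the definition that $\widetilde{A_{X\star}}B$ corresponds to the anticommutator $A_X\circ B+B\circ A_X$, while $A_{X\star}B$ corresponds to the commutator $A_X\circ B-B\circ A_X$. Since $A_X$ is skew, $\widetilde{A_{X\star}}$ therefore interchanges symmetric and skew endomorphisms; combined with the fact that $\bar\nabla$ preserves $\Sym_0M$ and $\wedge^2_{14}M$, this shows that the left-hand side of $(\ref{cross product action2 Sym_0})$ is an honest $2$-form and that the left-hand side of $(\ref{cross product action2 wedge^2_{14}})$ is a symmetric tensor; moreover the latter is trace-free, because the trace of $\widetilde{A_{X\star}}w$ is the full contraction $\sum_{j,k}w_{jk}\,\phi(X,e_j,e_k)$, which vanishes exactly for $w\in\wedge^2_{14}M$. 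Hence $\mathbf{i}$ may legitimately be applied to it, and both statements are meaningful.

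For $(\ref{cross product action2 Sym_0})$ I would compute the canonical codifferential of $\gamma=-2H_{\star}\phi$. As $\bar\nabla\phi=0$ one has $\bar\delta\gamma=2\,e_i\mathbin{\lrcorner}\bar\nabla_{e_i}(H_{\star}\phi)=2\,e_i\mathbin{\lrcorner}\bigl((\bar\nabla_{e_i}H)_{\star}\phi\bigr)$; writing $(\bar\nabla_{e_i}H)_{\star}\phi=-\bigl((\bar\nabla_{e_i}H)e_k\bigr)\wedge(e_k\mathbin{\lrcorner}\phi)$ and carrying out the inner contraction with $e_i$, the ``diagonal'' part collapses to $\bar\delta H\mathbin{\lrcorner}\phi$, which equals $\delta H\mathbin{\lrcorner}\phi$ by the Schur relation $(\ref{Schur H})$, while the remaining part, once $e_i\mathbin{\lrcorner}e_k\mathbin{\lrcorner}\phi$ is rewritten in terms of $A_{e_i}e_k$, is (up to the sign fixed by the orientation convention) exactly $2\,\widetilde{A_{e_i\star}}\bar\nabla_{e_i}H$. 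This yields the first equality in $(\ref{cross product action2 Sym_0})$. For the second equality I would pass from $\bar\delta$ to $\delta$: since $\bar\nabla_X=\nabla_X-\frac{\tau_0}{12}A_{X\star}$ (recall $\tau_0=4$ here) and $\gamma\in\wedge^3_{27}M$, the Schur relation $(\ref{Schur gamma})$ gives $\bar\delta\gamma=\delta\gamma$; then I would split $\delta\gamma=(\delta\gamma)_{\wedge^2_7}+(\delta\gamma)_{\wedge^2_{14}}$ and use the identity $(\delta\gamma)_{\wedge^2_7}=\tfrac43\,\delta H\mathbin{\lrcorner}\phi$ (obtained by projecting $\delta(H_{\star}\phi)$ onto $\wedge^2_7M\cong TM$, or equivalently read off from the Alexandrov--Semmelmann relations underlying $(\ref{eq: A_{e_i}gamma})$) to rewrite $\tfrac12\bar\delta\gamma-\delta H\mathbin{\lrcorner}\phi$ as $-\tfrac13\delta H\mathbin{\lrcorner}\phi+\tfrac12(\delta\gamma)_{\wedge^2_{14}}$.

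For $(\ref{cross product action2 wedge^2_{14}})$ I would expand $\mathbf{i}(\widetilde{A_{e_i\star}}\bar\nabla_{e_i}w)=-2\,(\widetilde{A_{e_i\star}}\bar\nabla_{e_i}w)_{\star}\phi$ in a local orthonormal frame, exactly as in the $\wedge^2_{14}$-part of the proof of Proposition~\ref{prop: cross product action1}: one distributes the $\star$-extension over the antisymmetrized derivation, then applies the action formulas $(\ref{eq: cross product action to associative})$ and $(\ref{eq: wedge^2_14 action to associative})$ together with the Schur relations $(\ref{Schur w})$, $(\ref{Schur w'})$ and the algebraic identities $(\ref{eq1})$, $(\ref{eq2})$, $(\ref{eq9})$, $(\ref{eq12})$. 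Two families of terms survive. In the first, the cross products act only through $\bar\nabla_{e_i}w$ and reassemble into $8\,\bar dw$; here $\bar dw=dw$ because $e_i\wedge(A_{e_i\star}w)=0$ by $(\ref{Schur w})$. In the second, a cross product hits a $\phi$-factor, turning it into $\psi$ via $A_{X\star}\phi=3\,X\mathbin{\lrcorner}\psi$; after contracting and using $\bar\delta w=\delta w$ (which follows from $(\ref{Schur w'})$) these collapse to $-2\,\delta w\mathbin{\lrcorner}\psi$. Adding the two contributions gives $(\ref{cross product action2 wedge^2_{14}})$.

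I expect the only real obstacle to be computational: keeping a correct account of the numerous interior- and exterior-product terms, and above all of the signs, which are delicate here because of the nonstandard orientation convention adopted in this paper. Concretely, at each stage one must verify that every contribution lying outside the expected $\G_2$-type --- $\wedge^2_7M\oplus\wedge^2_{14}M$ in $(\ref{cross product action2 Sym_0})$, and $\wedge^3_{27}M$ in $(\ref{cross product action2 wedge^2_{14}})$ --- either cancels or is reabsorbed, and it is precisely the Schur relations $(\ref{Schur w})$--$(\ref{Schur gamma})$ and the identity $(\ref{eq: A_{e_i}gamma})$ that make this bookkeeping close.
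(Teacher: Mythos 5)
Your proposal follows essentially the same route as the paper's proof. For the first identity $(\ref{cross product action2 Sym_0})$, your calculation of $\bar\delta\gamma = 2\,e_i\mathbin{\lrcorner}\bigl((\bar\nabla_{e_i}H)_\star\phi\bigr)$ by the Leibniz rule, with the ``diagonal'' contraction giving $\bar\delta H\mathbin{\lrcorner}\phi=\delta H\mathbin{\lrcorner}\phi$ via $(\ref{Schur H})$ and the remainder giving $2\widetilde{A_{e_i\star}}\bar\nabla_{e_i}H$, is precisely the paper's chain of equalities read in the opposite direction, and the passage to the second equality using $\bar\delta\gamma=\delta\gamma$ (from $(\ref{Schur gamma})$) together with $(\delta\gamma)_{\wedge^2_7}=\tfrac{4}{3}\delta H\mathbin{\lrcorner}\phi$ matches the paper as well. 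Your structural observation that $\widetilde{A_{X\star}}$ interchanges $\Sym_0M$ and $\wedge^2M$ and preserves trace-freeness is correct and is a useful type-check that the paper does not spell out. For the second identity $(\ref{cross product action2 wedge^2_{14}})$, your outline points at the right tools but stays at the level of a roadmap: the structurally important step in the paper's proof, which you do not mention, is to first rewrite $\widetilde{A_{e_i\star}}\bar\nabla_{e_i}w$ as a combination of $A_{e_i\star}\bar\nabla_{e_i}w$ and a composition $\bar\nabla_{e_i}w\circ A_{e_i}$, so that $(\ref{cross product action1 wedge^2_{14}})$ can be applied directly to the first piece before distributing the $\star$-action over the remaining wedge products. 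The ``two families of terms'' description is somewhat too coarse for the actual chain, in which the $\delta w\mathbin{\lrcorner}\psi$ contributions appear with different coefficients that partially cancel and several pieces are annihilated by $(\ref{Schur w})$, $(\ref{Schur w'})$ and $(\ref{eq: wedge^2_14 action to associative})$ along the way. None of this represents a gap in the method, but the second computation would need to be carried out in full to constitute a proof.
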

\begin{proof}
We follow the same procedure as we show the equation $(\ref{cross product action1 wedge^2_{14}})$ to prove the equation $(\ref{cross product action2 Sym_0})$.
\begin{eqnarray*}
\widetilde{A_{e_i \star}}\bar{\nabla}_{e_i}H &=& (\bar{\nabla}_{e_i}H)e_j \wedge (e_i \mathbin{\lrcorner} e_j \mathbin{\lrcorner} \phi) \\
&=& -e_i \mathbin{\lrcorner} ((\bar{\nabla}_{e_i}H)e_j \wedge e_j \mathbin{\lrcorner} \phi) + (\bar{\nabla}_{e_i}H)(e_j,e_i)e_j \mathbin{\lrcorner} \phi \\
&\overset{(\ref{Schur H})}{=}& -e_i \mathbin{\lrcorner} \bar{\nabla}_{e_i}(He_j \wedge e_j \mathbin{\lrcorner} \phi) - \delta H \mathbin{\lrcorner} \phi \\
&=& e_i \mathbin{\lrcorner} \bar{\nabla}_{e_i}(H_{\star}\phi) - \delta H \mathbin{\lrcorner} \phi \\
&=& - \frac{1}{2}e_i \mathbin{\lrcorner} \bar{\nabla}_{e_i}\gamma - \delta H \mathbin{\lrcorner} \phi \\
&=& \frac{1}{2} \bar{\delta} \gamma - \delta H \mathbin{\lrcorner} \phi.
\end{eqnarray*}
From the characterization of the bundle $\wedge^2_7M$, the $\wedge^2_7M$-part $\left( \bar{\delta} \gamma \right)_{\wedge^2_7}$ of $\bar{\delta} \gamma$ is denoted by $\left( \bar{\delta} \gamma \right)_{\wedge^2_7} = X \mathbin{\lrcorner} \phi$ for some vector field $X$. This $X$ is obtained by taking the inner product with any vector field $Z$ as follows:
\begin{align*}
3\langle Z,X \rangle &= \langle Z \mathbin{\lrcorner} \phi, X \mathbin{\lrcorner} \phi \rangle = \langle Z \mathbin{\lrcorner} \phi, \bar{\delta}\gamma \rangle = -\frac{1}{2} (\bar{\nabla}_{e_i}\gamma)(e_i,e_j,e_k) \phi(Z,e_j,e_k) \\
&= 4\bar{\delta}H(Z) \overset{(\ref{Schur H})}{=} 4\langle Z, \delta H \rangle.
\end{align*}
Thus, we get $\left( \bar{\delta} \gamma \right)_{\wedge^2_7} = \frac{4}{3}\delta H \mathbin{\lrcorner} \phi$. For the $\wedge^2_{14}M$-part $\left( \bar{\delta}\gamma \right)_{\wedge^2_{14}}$ of $\bar{\delta} \gamma$, the equation $(\ref{Schur gamma})$ gives $\left( \bar{\delta}\gamma \right)_{\wedge^2_{14}} = \left( \delta\gamma \right)_{\wedge^2_{14}}$. \\
Next, since $\widetilde{A_{e_i \star}}\bar{\nabla}_{e_i}w = -A_{e_i \star}\bar{\nabla}_{e_i}w - 2\bar{\nabla}_{e_i}w \circ A_{e_i}$ holds, we have
\begin{eqnarray*}
\mathbf{i}(\widetilde{A_{e_i \star}}\bar{\nabla}_{e_i}w) &=& -2(\widetilde{A_{e_i \star}}\bar{\nabla}_{e_i}w)_{\star}\phi \\
&=& 2(A_{e_i \star}\bar{\nabla}_{e_i}w)_{\star}\phi + 4((\bar{\nabla}_{e_i}w) \circ A_{e_i})_{\star}\phi \\
&\overset{(\ref{cross product action1 wedge^2_{14}})}{=}& 2(\delta w \mathbin{\lrcorner} \phi)_{\star}\phi - 4(A_{e_i}\bar{\nabla}_{e_i}w(e_k)) \wedge e_k \mathbin{\lrcorner} \phi \\
&=& 2A_{\delta w \star}\phi - 4A_{e_i \star}((\bar{\nabla}_{e_i}w)(e_k) \wedge e_k \mathbin{\lrcorner} \phi) \\
&& + 4(\bar{\nabla}_{e_i}w)(e_k) \wedge A_{e_i \star}(e_k \mathbin{\lrcorner} \phi) \\ 
&\overset{(\ref{eq: cross product action to associative})}{=}& 6\delta w \mathbin{\lrcorner} \psi -4A_{e_i \star}\bar{\nabla}_{e_i}(w_{\star}\phi) \\
&&+ 4(\bar{\nabla}_{e_i}w)(e_k) \wedge (A_{e_i}e_k \mathbin{\lrcorner} \phi + e_k \mathbin{\lrcorner} A_{e_i \star}\phi) \\
&\overset{(\ref{eq: cross product action to associative}),(\ref{eq: wedge^2_14 action to associative})}{=}& 6\delta w \mathbin{\lrcorner} \psi + 4(\bar{\nabla}_{e_i}w)(e_k) \wedge A_{A_{e_i}e_k} + 12(\bar{\nabla}_{e_i}w)(e_k) \wedge e_k \mathbin{\lrcorner} (e_i \mathbin{\lrcorner} \psi) \\
&=& 6\delta w \mathbin{\lrcorner} \psi + 4(\bar{\nabla}_{e_i}w)(e_k) \wedge (e_i \wedge e_k) + 8(\bar{\nabla}_{e_i}w)(e_k) \wedge e_k \mathbin{\lrcorner} e_i \mathbin{\lrcorner} \psi \\
&=& 6\delta w \mathbin{\lrcorner} \psi + 8e_i \wedge \bar{\nabla}_{e_i}w + 8(\bar{\nabla}_{e_i}w)_{\star}(e_i \mathbin{\lrcorner} \psi) \\
&\overset{(\ref{Schur w})}{=}& 6\delta w \mathbin{\lrcorner} \psi + 8dw + 8(\bar{\nabla}_{e_i}w)e_i \mathbin{\lrcorner} \psi + 8e_i \mathbin{\lrcorner} (\bar{\nabla}_{e_i} w)_{\star}\psi \\
&\overset{(\ref{eq: wedge^2_14 action to associative}),(\ref{Schur w'})}{=}& 8dw - 2\delta w \mathbin{\lrcorner} \psi.
\end{eqnarray*}
\end{proof}

\section{Comparison of differential operators} \label{sec: comparison of differential operators}
Assume that $(M,\phi,g)$ is a nearly parallel $\G_2$-manifold with $\scal=42$. We defined the twisted Dirac operator $D_{TM}$ in Subsection \ref{subsec: infinitesimal deformations of killing spinors}. We also have a natural second-order differential operator called the standard Laplace operator $\Delta = \nabla^{\ast}\nabla + q(R)$. Similarly, we introduce the twisted Dirac operator $\overline{D_{TM}} = (e_k \cdot \otimes \Id_{TM^{\C}}) \circ \bar{\nabla}_{e_k}$ for the canonical $\G_2$-connection and the $\G_2$-Laplace operator $\bar{\Delta} = \bar{\nabla}^{\ast}\bar{\nabla} + q(\bar{R})$. These Laplace operators are introduced in \cite{SemmelmannWeingart}. To clarify the relationship between the twisted Dirac operator and the $\G_2$-Laplace operator, we proceed.

First, we get the difference between the twisted Dirac operator for the Levi-Civita connection and the one for the canonical $\G_2$-connection.

\begin{theorem}
On the sections of $S_{1/2} \otimes TM^{\C}$, the relation holds:
\begin{equation} \label{eq: twisted dirac relation}
\overline{D_{TM}} = D_{TM} - \frac{1}{2}\phi \cdot \otimes \Id - \frac{1}{3} e_i \cdot \otimes A_{e_i}.
\end{equation}
\end{theorem}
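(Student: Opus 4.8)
The plan is to compute the difference $\overline{D_{TM}} - D_{TM}$ directly from the defining formulas of the two twisted Dirac operators and the explicit expression for $\bar\nabla$ on the twisted bundle $S_{1/2}\otimes TM^{\C}$. Since $\overline{D_{TM}} = (e_k\cdot\otimes\Id)\circ\bar\nabla_{e_k}$ and $D_{TM} = (e_k\cdot\otimes\Id)\circ\nabla_{e_k}$, the difference is $(e_k\cdot\otimes\Id)\circ(\bar\nabla_{e_k} - \nabla_{e_k})$, so I just need to insert the two correction terms that describe $\bar\nabla$ on each tensor factor. On the spinor factor, equation (\ref{eq: G2 conn on spinor bundle}) gives $\bar\nabla_{X}-\nabla_{X} = -\frac{\tau_0}{24}(X\mathbin{\lrcorner}\phi)\cdot$, and with the normalization $\tau_0 = 4$ in force throughout this section this is $-\frac{1}{6}(X\mathbin{\lrcorner}\phi)\cdot$. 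On the $TM^{\C}$ factor, equation (\ref{eq: G2 conn on tangent bundle}) gives $\bar\nabla_X - \nabla_X = -\frac{\tau_0}{12}A_X = -\frac13 A_X$. Thus
\[
\overline{D_{TM}} - D_{TM} = -\frac{1}{6}\,e_k\cdot(e_k\mathbin{\lrcorner}\phi)\cdot\otimes\Id \;-\; \frac13\, e_k\cdot\otimes A_{e_k}.
\]

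The second term is already in the desired form, so the only real work is to evaluate the Clifford-algebra sum $\sum_k e_k\cdot(e_k\mathbin{\lrcorner}\phi)\cdot$ acting on spinors. This is a standard identity: for a $p$-form $\omega$ one has $\sum_k e_k\cdot(e_k\mathbin{\lrcorner}\omega)\cdot = p\,\omega\cdot$ on spinors (Clifford multiplication by $e_k\mathbin{\lrcorner}\omega$ followed by $e_k\cdot$ reconstructs $\omega$ up to the combinatorial factor coming from how many ways a basis covector can be stripped off). With $p=3$ this gives $\sum_k e_k\cdot(e_k\mathbin{\lrcorner}\phi)\cdot = 3\,\phi\cdot$, hence the first term equals $-\frac{1}{6}\cdot 3\,\phi\cdot\otimes\Id = -\frac12\,\phi\cdot\otimes\Id$, which is exactly the claimed coefficient. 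I would either cite this contraction identity from a standard reference on Clifford algebras and spin geometry, or prove it in one line by writing $\phi = \frac{1}{3!}\phi_{ijk}e^i\wedge e^j\wedge e^k$, using $e_k\mathbin{\lrcorner}(e^i\wedge e^j\wedge e^k\cdot\text{stuff})$ and the Clifford relation $e_k\cdot e_k\cdot = -1$ (no sum) together with the skew-symmetry of $\phi_{ijk}$ to collect the three equal contributions.

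The main obstacle — such as it is — is bookkeeping: one must be careful that the contraction identity $\sum_k e_k\cdot(e_k\mathbin{\lrcorner}\phi)\cdot = 3\,\phi\cdot$ uses the convention that $\phi\cdot$ denotes Clifford multiplication by the $3$-form $\phi$ (extending Clifford multiplication from $1$-forms to the whole exterior algebra), and that the sign conventions for Clifford multiplication and for the contraction $\mathbin{\lrcorner}$ are the ones fixed earlier in the paper; a sign slip here would change the coefficient $-\frac12$. There is also a minor point that the identification $TM\cong T^{\ast}M$ is being used so that $X\mathbin{\lrcorner}\phi$ is viewed as a $2$-form and $A_{e_k}$ acts on the $TM^{\C}$ factor as an endomorphism; once these identifications are in place the computation is immediate and the theorem follows.
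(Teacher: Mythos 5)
Your proposal is correct and follows essentially the same route as the paper, which simply invokes a ``direct calculation'' from the two connection-difference formulas $\bar\nabla_X-\nabla_X=-\tfrac{\tau_0}{12}A_X$ on $TM$ and $\bar\nabla_X-\nabla_X=-\tfrac{\tau_0}{24}(X\mathbin{\lrcorner}\phi)\cdot$ on $S_{1/2}$. The only ingredient you make explicit that the paper leaves implicit is the standard Clifford contraction identity $\sum_k e_k\cdot(e_k\mathbin{\lrcorner}\phi)\cdot=3\,\phi\cdot$, which is indeed what produces the coefficient $-\tfrac12$ from $-\tfrac16$.
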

\begin{proof}
A direct calculation using the equation $(\ref{eq: G2 conn on tangent bundle})$ and $(\ref{eq: G2 conn on spinor bundle})$ gives the equation $(\ref{eq: twisted dirac relation})$
\end{proof}

Next, we see the relationship between the $\G_2$-Laplace operator and the twisted Dirac operator for the canonical $\G_2$-connection.

\begin{lemma}
For any vector field $X$ and spinor $\zeta$, we have
\begin{equation} \label{eq: Lichnerowicz lemma}
e_j \cdot \bar{R}_S(X,e_j)\zeta = -\frac{1}{2}\overline{\Ric}(X) \cdot \zeta - \frac{2}{3}(X\mathbin{\lrcorner}\psi) \cdot \zeta,
\end{equation}
where $\bar{R}_S$ denotes the curvature tensor for the canonical $\G_2$-connection on the spinor bundle $S_{1/2}$.
\end{lemma}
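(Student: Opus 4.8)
The plan is to run the classical Lichnerowicz-type computation, but for the metric connection with torsion $\bar{\nabla}$, for which the place of the first Bianchi identity is taken by $(\ref{eq: 1st Bianchi identity G2 ver})$. First I would write the spinorial curvature of $\bar{\nabla}$ in terms of the base curvature,
\[
\bar{R}_S(X,e_j)\zeta = \tfrac{1}{4}\, \bar{R}(X,e_j,e_k,e_l)\, e_k \cdot e_l \cdot \zeta ,
\]
so that, with the summation convention,
\[
e_j \cdot \bar{R}_S(X,e_j)\zeta = \tfrac{1}{4}\, \bar{R}(X,e_j,e_k,e_l)\, e_j \cdot e_k \cdot e_l \cdot \zeta ,
\]
and then split this triple sum according to the coincidence pattern of $j,k,l$.

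The diagonal part is routine. The terms with $k=l$ vanish by the antisymmetry of $\bar{R}$ in its last pair. In the terms with $j=k$ one has $e_j \cdot e_j \cdot e_l \cdot = -\,e_l \cdot$, and the pair symmetry $\bar{R}(X,e_j,e_j,e_l)=\bar{R}(e_j,e_l,X,e_j)$ together with $\overline{\Ric}(Y,Z)=\bar{R}(e_j,Y,Z,e_j)$ sums them to $-\tfrac{1}{4}\overline{\Ric}(X)\cdot\zeta$; in the terms with $j=l$ one has $e_j \cdot e_l \cdot e_j \cdot = e_l \cdot$ and $\sum_j \bar{R}(X,e_j,e_k,e_j)=-\overline{\Ric}(X,e_k)$, again giving $-\tfrac{1}{4}\overline{\Ric}(X)\cdot\zeta$. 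Together these yield the first term $-\tfrac{1}{2}\overline{\Ric}(X)\cdot\zeta$.

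The essential point is the sum over pairwise distinct $j,k,l$. There the Clifford product $e_j \cdot e_k \cdot e_l \cdot$ is totally antisymmetric in $(j,k,l)$, so I may replace $\bar{R}(X,e_j,e_k,e_l)$ by its total antisymmetrization in these three slots, which (using the antisymmetry in the last pair) equals the cyclic average $\tfrac{1}{3}\left[\bar{R}(X,e_j,e_k,e_l)+\bar{R}(X,e_k,e_l,e_j)+\bar{R}(X,e_l,e_j,e_k)\right]$. The pair symmetry of $\bar{R}$ moves $X$ into the last slot of each summand, and then $(\ref{eq: 1st Bianchi identity G2 ver})$ --- in the form $\bar{R}(W,X,Y,Z)+\bar{R}(X,Y,W,Z)+\bar{R}(Y,W,X,Z)=\tfrac{4}{3}\psi(Z,W,X,Y)$, obtained from $g(X,\chi(Y,Z,W))=2\psi(X,Y,Z,W)$ --- evaluates this cyclic sum to $-\tfrac{4}{9}\psi(X,e_j,e_k,e_l)$. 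Reinstating the full range of summation (the extra terms vanish since $\psi$ is alternating) and using $\psi(X,e_j,e_k,e_l)\, e_j \cdot e_k \cdot e_l \cdot = 6\,(X\mathbin{\lrcorner}\psi)\cdot$ produces the second term $-\tfrac{2}{3}(X\mathbin{\lrcorner}\psi)\cdot\zeta$. Adding the two contributions gives the formula.

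The only real obstacle is bookkeeping: $(\ref{eq: 1st Bianchi identity G2 ver})$ is cyclic in its first three arguments whereas the Clifford product forces a symmetrization in the last three, so the symmetry $\bar{R}(W,X,Y,Z)=\bar{R}(Y,Z,W,X)$ and the pairwise antisymmetries must be combined carefully to pass between the two, and every numerical factor (the $\tfrac{1}{4}$ in the spinorial curvature, the $\tfrac{1}{3}$ of the cyclic average, the $3!$ relating a $3$-form to its Clifford action, and the normalization $g(X,\chi(\cdot,\cdot,\cdot))=2\psi(X,\cdot,\cdot,\cdot)$) has to be tracked exactly. No genuinely new idea is required.
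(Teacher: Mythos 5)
Your proof is correct and follows the same line as the paper: express $e_j\cdot\bar R_S(X,e_j)$ via the spinorial curvature formula, split off the Ricci trace contributions with the Clifford relations, and reduce the totally antisymmetric remainder to the cyclic sum which the $\G_2$-Bianchi identity $(\ref{eq: 1st Bianchi identity G2 ver})$ identifies with the $\psi$-term. The paper does exactly this by relabeling dummy indices and reordering Clifford products rather than by decomposing into coincidence patterns, but the computation and all numerical factors agree.
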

\begin{proof}
Using the Clifford relation, we obtain
\begin{equation*}
 \begin{split}
 12 e_j \cdot \bar{R}_{S}(X,e_j) &= 3g(\bar{R}(X, e_j)e_k,e_l)e_je_ke_l = 3g(X,e_i)\bar{R}_{ijkl}e_je_ke_l \\
 &= g(X,e_i)(\bar{R}_{ijkl}e_je_ke_l + \bar{R}_{iklj}e_ke_le_j + \bar{R}_{iljk}e_le_je_k) \\
 &= g(X,e_i)\bar{R}_{ijkl}e_je_ke_l + g(X,e_i)\bar{R}_{iklj}(-2\delta_{lj}e_k + 2\delta_{kj}e_l + e_je_ke_l) \\
 &\quad + g(X,e_i)\bar{R}_{iljk}(-2\delta_{lj}e_k + 2\delta_{lk}e_j + e_je_ke_l) \\
 &= g(X,e_i)(\bar{R}_{ijkl} + \bar{R}_{iklj} + \bar{R}_{iljk})e_je_ke_l - 6\overline{\Ric}(X,e_k)e_k.
 \end{split}
\end{equation*}
From the 1st Bianchi identity $(\ref{eq: 1st Bianchi identity G2 ver})$ for the canonical $\G_2$-connection $\bar{\nabla}$, it follows that
\begin{equation*}
\begin{split}
&g(X,e_i)(\bar{R}_{ijkl} + \bar{R}_{iklj} + \bar{R}_{iljk})e_je_ke_l = -g(X,e_i)(\bar{R}_{jkli} + \bar{R}_{klji} + \bar{R}_{ljki})e_je_ke_l \\
&= -\frac{2}{3}g(X,e_i)g(\chi(e_j,e_k,e_l),e_i)e_je_ke_l = -\frac{4}{3}\psi(X,e_j,e_k,e_l)e_je_ke_l = -8X\mathbin{\lrcorner}\psi.
\end{split}
\end{equation*}
\end{proof}

From the equation $(\ref{eq: Lichnerowicz lemma})$, we immediately see that
\begin{equation} \label{eq: Lichnerowicz lemma2}
e_i \cdot e_j \cdot \bar{R}_S(e_i,e_j) = \frac{56}{3} - \frac{8}{3}\psi \cdot.
\end{equation}
Moreover, we also find that the curvature endomorphism $q_S(\bar{R})$ on the spinor bundle is
\begin{equation*}
q_S(\bar{R}) = \frac{14}{3} - \frac{2}{3}\psi \cdot.
\end{equation*}

\begin{theorem}
For the square of the twisted Dirac operator and the canonical $\G_2$-connection, we obtain that
\begin{equation} \label{eq: relation between twisted dirac and G2-Laplace}
\overline{D_{TM}}^2 = \bar{\Delta}_{S \otimes T} - \frac{2}{3} - \frac{2}{3}\psi \cdot \otimes \Id + \frac{2}{3}\big( (e_j \mathbin{\lrcorner} \phi) \cdot \otimes \Id \big) \bar{\nabla}_{e_j}.
\end{equation}
\end{theorem}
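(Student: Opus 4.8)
The plan is to derive \eqref{eq: relation between twisted dirac and G2-Laplace} as a Bochner--Weitzenb\"ock identity for the twisted Dirac operator $\overline{D_{TM}}$, regarded as the Dirac operator of the Clifford module bundle $S_{1/2}\otimes TM^{\C}$ equipped with $\bar\nabla$ (the tensor product of the canonical $\G_2$-connection on the spinor factor and on $TM^{\C}$). What makes this go through in spite of the torsion of $\bar\nabla$ is that $\bar\nabla$ is still metric, hence compatible with Clifford multiplication, $\bar\nabla_X(Z\cdot\Phi)=(\bar\nabla_XZ)\cdot\Phi+Z\cdot\bar\nabla_X\Phi$; its torsion is computed from \eqref{eq: G2 conn on tangent bundle} to be $\bar T(X,Y)=-\tfrac{\tau_0}{6}A(X,Y)=-\tfrac{2}{3}A(X,Y)$, using $\tau_0=4$. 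Fixing a point $p$ and a local orthonormal frame $\{e_i\}$ that is $\bar\nabla$-synchronous at $p$, I would expand $\overline{D_{TM}}^{\,2}=(e_i\cdot\otimes\Id)\bar\nabla_{e_i}\circ(e_j\cdot\otimes\Id)\bar\nabla_{e_j}$ using Clifford compatibility; the terms containing $\bar\nabla_{e_i}e_j$ vanish at $p$, leaving $\sum_{i,j}(e_i\cdot e_j\cdot\otimes\Id)\bar\nabla_{e_i}\bar\nabla_{e_j}$.

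I would then separate the $i=j$ and $i\neq j$ contributions. The diagonal one is $-\sum_i\bar\nabla_{e_i}\bar\nabla_{e_i}$, and it equals $\bar\nabla^{\ast}\bar\nabla$ at $p$ because a $\bar\nabla$-synchronous frame also satisfies $\nabla_{e_i}e_i|_p=\bar\nabla_{e_i}e_i|_p+\tfrac{\tau_0}{12}A(e_i,e_i)|_p=0$ (as $A$ is skew). The off-diagonal one is $\tfrac12\sum_{i\neq j}(e_i\cdot e_j\cdot\otimes\Id)\bigl(\bar R^{S\otimes T}(e_i,e_j)+\bar\nabla_{[e_i,e_j]}\bigr)$, and at $p$ one has $[e_i,e_j]|_p=-\bar T(e_i,e_j)|_p=\tfrac23A(e_i,e_j)|_p$. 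So, at $p$ and hence globally,
\[
\overline{D_{TM}}^{\,2}=\bar\nabla^{\ast}\bar\nabla+\tfrac12\sum_{i,j}(e_i\cdot e_j\cdot\otimes\Id)\,\bar R^{S\otimes T}(e_i,e_j)+\tfrac13\sum_{i,j}(e_i\cdot e_j\cdot\otimes\Id)\,\bar\nabla_{A(e_i,e_j)},
\]
where $\bar R^{S\otimes T}(e_i,e_j)=\bar R_S(e_i,e_j)\otimes\Id+\Id\otimes\bar R(e_i,e_j)$.

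For the last term I would write $A(e_i,e_j)=\sum_m\phi(e_m,e_i,e_j)e_m$ and use the Clifford identity $\sum_{i,j}\phi(e_m,e_i,e_j)\,e_i\cdot e_j\cdot=2\,(e_m\mathbin{\lrcorner}\phi)\cdot$, which converts it into $\tfrac23\bigl((e_j\mathbin{\lrcorner}\phi)\cdot\otimes\Id\bigr)\bar\nabla_{e_j}$, exactly the last term of \eqref{eq: relation between twisted dirac and G2-Laplace}. For the curvature term I would split off the purely spinorial piece $\tfrac12\sum_{i,j}(e_i\cdot e_j\cdot)\bar R_S(e_i,e_j)\otimes\Id$, which by \eqref{eq: Lichnerowicz lemma2} and $q_S(\bar R)=\tfrac{14}{3}-\tfrac23\psi\cdot$ equals $2\,q_S(\bar R)\otimes\Id$, leaving $\tfrac12\sum_{i,j}(e_i\cdot e_j\cdot)\otimes\bar R(e_i,e_j)$. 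On the other hand, expanding $q_{S\otimes T}(\bar R)=\tfrac12(e_i\wedge e_j)_{\star}\bar R(e_i\wedge e_j)_{\star}$ on $S_{1/2}\otimes TM^{\C}$ produces $q_S(\bar R)\otimes\Id+\Id\otimes q_T(\bar R)$ plus a ``mixed'' term, and a short relabelling using the pair symmetry $\bar R_{ijkl}=\bar R_{klij}$ identifies this mixed term with $\tfrac12\sum_{i,j}(e_i\cdot e_j\cdot)\otimes\bar R(e_i,e_j)$, so the two cancel in the difference. Therefore (curvature term)$\,-\,q_{S\otimes T}(\bar R)=q_S(\bar R)\otimes\Id-\Id\otimes q_T(\bar R)=\bigl(\tfrac{14}{3}-\tfrac23\psi\cdot\bigr)\otimes\Id-\tfrac{16}{3}=-\tfrac23-\tfrac23\psi\cdot\otimes\Id$, using $q_T(\bar R)=\tfrac{16}{3}\Id$; re-absorbing $q_{S\otimes T}(\bar R)$ into $\bar\nabla^{\ast}\bar\nabla$ to form $\bar\Delta_{S\otimes T}$ gives \eqref{eq: relation between twisted dirac and G2-Laplace}.

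The step I expect to demand the most care is the normalization bookkeeping in the last paragraph: one must keep the $\mathfrak{so}(7)$-action $(e_i\wedge e_j)_{\star}$ on $S_{1/2}$ carefully distinct from the one on $TM^{\C}$, and must verify rather than assume the factor $2$ relating the Clifford contraction $\tfrac12\sum_{i,j}e_i\cdot e_j\cdot\,\bar R_S(e_i,e_j)$ to $q_S(\bar R)$. That factor is genuine and is precisely accounted for by the modification \eqref{eq: 1st Bianchi identity G2 ver} of the first Bianchi identity for $\bar\nabla$; for the Levi-Civita connection the corresponding contraction would instead be $q_S(R)$ itself, so the difference $\overline{D_{TM}}^{\,2}-\bar\Delta_{S\otimes T}$ owes its nontrivial zeroth-order part entirely to the torsion.
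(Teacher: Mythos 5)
Your derivation is correct and follows essentially the same route as the paper: the paper cites Homma \cite{Yasushi} for the Lichnerowicz-type formula
\[
\overline{D_{TM}}^{2}=\bar\nabla^{\ast}\bar\nabla+\tfrac{2}{3}\big((e_j\mathbin{\lrcorner}\phi)\cdot\otimes\Id\big)\bar\nabla_{e_j}+\tfrac12 e_je_k\,\bar R_S(e_j,e_k)\otimes\Id+\tfrac12 e_je_k\cdot\otimes\bar R(e_j,e_k),
\]
whereas you rederive it from scratch at a $\bar\nabla$-synchronous frame by tracking the torsion $\bar T(X,Y)=-\tfrac{2}{3}A(X,Y)$, and then both arguments evaluate the spinor curvature via \eqref{eq: Lichnerowicz lemma2}, expand $q_{S\otimes T}(\bar R)$, and use the pair symmetry $\bar R_{ijkl}=\bar R_{klij}$ to cancel the mixed curvature term. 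One small factual slip in your closing remark: the factor $2$ in $\tfrac12\sum_{i,j}e_i e_j\,\bar R_S(e_i,e_j)=2q_S(\bar R)$ is \emph{not} a consequence of the modified Bianchi identity \eqref{eq: 1st Bianchi identity G2 ver}; it is a purely representation-theoretic bookkeeping factor that holds identically for the Levi-Civita connection as well (there $\tfrac12\sum e_ie_j R_S(e_i,e_j)=\tfrac{\scal}{4}=2q_S(R)$). The torsion only changes the \emph{value} $\tfrac{28}{3}-\tfrac{4}{3}\psi$ of the contraction, not the factor $2$; since you never used the incorrect claim in the computation itself, the proof stands.
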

\begin{proof}
Calculating the same as the Lichnerowicz formula (cf. \cite[p.107]{Yasushi}), we get
\begin{equation*}
\overline{D_{TM}}^2 = \bar{\nabla}^{\ast}\bar{\nabla} + \frac{2}{3}\big( (e_j \mathbin{\lrcorner} \phi) \cdot \otimes \Id \big) \bar{\nabla}_{e_j} + \frac{1}{2}e_je_k \bar{R}_S(e_j,e_k) \otimes \Id + \frac{1}{2}e_je_k \cdot \otimes \bar{R}(e_j,e_k).
\end{equation*}
Applying the equation $(\ref{eq: Lichnerowicz lemma2})$, we see that the above equation becomes
\begin{equation} \label{eq: relation between twisted dirac and rough Laplacian}
\overline{D_{TM}}^2 = \bar{\nabla}^{\ast}\bar{\nabla} + \frac{28}{3} - \frac{4}{3}\psi\cdot \otimes \Id + \frac{2}{3}\big( (e_j \mathbin{\lrcorner} \phi) \cdot \otimes \Id \big) \bar{\nabla}_{e_j} + \frac{1}{2}e_je_k \cdot \otimes \bar{R}(e_j,e_k).
\end{equation}
As can be seen from the definition of the $\G_2$-Laplace operator $\bar{\Delta}_{S \otimes T}$, we only need to compute the curvature endomorphism $q_{S \otimes T}(\bar{R})$.
\begin{equation*}
 \begin{split}
 q_{S \otimes T}(\bar{R}) &= \frac{1}{2}\bar{R}(e_i \wedge e_j)_{\star} \otimes (e_i \wedge e_j)_{\star} + \frac{1}{2} (e_i \wedge e_j)_{\star} \otimes \bar{R}(e_i \wedge e_j)_{\star} \\
 &\quad + q_S(\bar{R}) \otimes \Id + \Id \otimes q_T(\bar{R}) \\
 &= 10 - \frac{2}{3}\psi \cdot \otimes \Id + \frac{1}{2}e_je_k \cdot \otimes \bar{R}(e_j,e_k).
 \end{split}
\end{equation*}
\end{proof}

The following lemma is obtained directly from the equations $(\ref{eq: G2 conn on tangent bundle})$, $(\ref{eq: G2 conn on spinor bundle})$, and the definition of the rough Laplacian $\nabla^{\ast}\nabla = - \nabla_{e_i}\nabla_{e_i} + \nabla_{\nabla_{e_i}e_i}$.
\begin{lemma}
On the sections of $S_{1/2} \otimes TM^{\C}$, we have
\begin{equation} \label{eq: rough Laplacian difference2}
\begin{split}
\bar{\nabla}^{\ast}\bar{\nabla} &= \nabla^{\ast}\nabla - \frac{5}{4} - \frac{1}{6}\psi \cdot \otimes \Id + \frac{1}{9}e_i \mathbin{\lrcorner} \phi \cdot \otimes A_{e_i} \\
&\quad + \frac{2}{3}(\Id \otimes A_{e_i})\bar{\nabla}_{e_i} + \frac{1}{3}(e_i \mathbin{\lrcorner} \phi \cdot \otimes \Id)\bar{\nabla}_{e_i}.
\end{split}
\end{equation}
\end{lemma}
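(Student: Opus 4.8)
The plan is to realize $\bar{\nabla}$ on $S_{1/2}\otimes TM^{\C}$ as a zeroth-order perturbation of $\nabla$ and then expand the Bochner Laplacian termwise. Setting $\tau_0=4$ in $(\ref{eq: G2 conn on tangent bundle})$ and $(\ref{eq: G2 conn on spinor bundle})$ and taking the tensor-product connection, I would write $\bar{\nabla}_X = \nabla_X - B_X$ with
\[
B_X \coloneqq \tfrac{1}{6}\,(X\mathbin{\lrcorner}\phi)\cdot\otimes\Id + \tfrac{1}{3}\,\Id\otimes A_X ,
\]
a $1$-form valued in skew-Hermitian endomorphisms (Clifford multiplication by a $2$-form is skew and $A_X$ is skew). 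Since the asserted identity is an identity of differential operators, it is enough to verify it pointwise in a local orthonormal frame with $\nabla_{e_i}e_j=0$ at the given point; there one has $A(e_i,e_i)=0$, hence also $\bar\nabla_{e_i}e_i=0$, so $\nabla^{\ast}\nabla=-\nabla_{e_i}\nabla_{e_i}$ and $\bar{\nabla}^{\ast}\bar{\nabla}=-\bar{\nabla}_{e_i}\bar{\nabla}_{e_i}$. A direct expansion, substituting $\nabla_{e_i}=\bar{\nabla}_{e_i}+B_{e_i}$ in the cross term, then gives
\[
\bar{\nabla}^{\ast}\bar{\nabla} = \nabla^{\ast}\nabla + \sum_i(\nabla_{e_i}B)(e_i) + 2\sum_i B_{e_i}\bar{\nabla}_{e_i} + \sum_i B_{e_i}B_{e_i},
\]
and it remains to evaluate the three correction terms.

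The first-order term costs nothing: $2\sum_i B_{e_i}\bar{\nabla}_{e_i}=\tfrac{2}{3}(\Id\otimes A_{e_i})\bar{\nabla}_{e_i}+\tfrac{1}{3}\big((e_i\mathbin{\lrcorner}\phi)\cdot\otimes\Id\big)\bar{\nabla}_{e_i}$, which is precisely the pair of first-order terms in $(\ref{eq: rough Laplacian difference2})$. Next I would check that the divergence term vanishes. On the spinor factor, $\bar{\nabla}\phi=0$ together with $(\ref{eq: cross product action to associative})$ gives $\nabla_X\phi=X\mathbin{\lrcorner}\psi$, so $\sum_i\nabla_{e_i}(e_i\mathbin{\lrcorner}\phi)=\sum_i e_i\mathbin{\lrcorner}(e_i\mathbin{\lrcorner}\psi)=0$. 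On the tensor factor, $\bar{\nabla}A=0$ gives $\nabla_X A=\tfrac{1}{3}A_{X\star}A$; contracting in $X$ along the frame, the two $A_{e_i}A_{e_i}$ contributions cancel and only $-\sum_i A_{A(e_i,e_i)}$ survives, which is $0$ since $A(e_i,e_i)=0$. Hence $\sum_i(\nabla_{e_i}B)(e_i)=0$.

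The substantive part is the zeroth-order term, which I would split as
\[
\sum_i B_{e_i}B_{e_i} = \tfrac{1}{36}\sum_i\big((e_i\mathbin{\lrcorner}\phi)\cdot\big)^2\otimes\Id + \tfrac{1}{9}\sum_i (e_i\mathbin{\lrcorner}\phi)\cdot\otimes A_{e_i} + \tfrac{1}{9}\,\Id\otimes\sum_i A_{e_i}^2 .
\]
The middle summand is already the third term of $(\ref{eq: rough Laplacian difference2})$. Putting $X=Y=e_i$ in $(\ref{eq1})$ and summing yields $\sum_i A_{e_i}^2=-6\,\Id$ on $TM$, so the last summand is $-\tfrac{2}{3}$. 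For the first summand I would expand $\big((e_i\mathbin{\lrcorner}\phi)\cdot\big)^2$ into Clifford monomials and feed in the component form $\sum_i\phi_{iab}\phi_{icd}=\delta_{ac}\delta_{bd}-\delta_{ad}\delta_{bc}-\psi_{abcd}$ of $(\ref{eq1})$, together with the elementary relations $e_a\cdot e_b\cdot e_b\cdot e_a\cdot=\Id$, $e_a\cdot e_b\cdot e_a\cdot e_b\cdot=(2\delta_{ab}-1)\Id$ and the identification $\sum_{a,b,c,d}\psi_{abcd}\,e_a\cdot e_b\cdot e_c\cdot e_d\cdot=24\,\psi\cdot$; this should produce $\sum_i\big((e_i\mathbin{\lrcorner}\phi)\cdot\big)^2=-21-6\,\psi\cdot$.

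Assembling, the zeroth-order term equals $-\tfrac{21}{36}-\tfrac{2}{3}-\tfrac{1}{6}\psi\cdot\otimes\Id+\tfrac{1}{9}(e_i\mathbin{\lrcorner}\phi)\cdot\otimes A_{e_i}=-\tfrac{5}{4}-\tfrac{1}{6}\psi\cdot\otimes\Id+\tfrac{1}{9}(e_i\mathbin{\lrcorner}\phi)\cdot\otimes A_{e_i}$, and adding the first-order term yields $(\ref{eq: rough Laplacian difference2})$. The one place where care is needed is the Clifford bookkeeping for $\sum_i\big((e_i\mathbin{\lrcorner}\phi)\cdot\big)^2$, and in particular keeping the sign conventions for $\psi\cdot$ and for $\chi$ versus $\psi$ consistent with $(\ref{eq1})$; the remainder is a formal consequence of the definition of the Bochner Laplacian, $\bar{\nabla}\phi=0$, $\bar{\nabla}A=0$ and $(\ref{eq: cross product action to associative})$.
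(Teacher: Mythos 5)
Your proposal is correct, and it is exactly the direct calculation the paper points to (the paper states the lemma follows immediately from $(\ref{eq: G2 conn on tangent bundle})$, $(\ref{eq: G2 conn on spinor bundle})$ and the formula $\nabla^{\ast}\nabla = -\nabla_{e_i}\nabla_{e_i}+\nabla_{\nabla_{e_i}e_i}$ without spelling it out). Your bookkeeping checks out: $\sum_i(\nabla_{e_i}B)(e_i)=0$, $\sum_i A_{e_i}^2=-6\,\Id$, and $\sum_i\big((e_i\mathbin{\lrcorner}\phi)\cdot\big)^2=-21-6\psi\cdot$ indeed combine to $-\tfrac{5}{4}-\tfrac{1}{6}\psi\cdot\otimes\Id$, with the cross term giving $\tfrac{1}{9}(e_i\mathbin{\lrcorner}\phi)\cdot\otimes A_{e_i}$ and the first-order piece $2\sum_iB_{e_i}\bar{\nabla}_{e_i}$ giving the remaining two terms.
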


Finally, we examine the difference in the square of the twisted Dirac operator for each connection.
\begin{theorem}
On the sections of $S_{1/2} \otimes TM^{\C}$, there is a relationship between two squared twisted Dirac operators ${D_{TM}}^2$ and $\overline{D_{TM}}^2$ such that
\begin{equation} \label{eq: square of twisted dirac difference}
\begin{split}
\overline{D_{TM}}^2 &= {D_{TM}}^2 + \frac{7}{12} - \frac{3}{2}\psi \cdot \otimes \Id + \frac{1}{9}e_j \mathbin{\lrcorner} \phi \cdot \otimes A_{e_j} + \frac{2}{3}(\Id \otimes A_{e_j})\bar{\nabla}_{e_j} \\
&\quad + (e_j \mathbin{\lrcorner} \phi \cdot \otimes \Id) \bar{\nabla}_{e_j} + \frac{1}{2}e_je_k \cdot \otimes \big( \bar{R}(e_j,e_k) - R(e_j,e_k) \big).
\end{split}
\end{equation}
Specifically, for a local section $\alpha^{(i)} \otimes e_i$ of $S_{1/2} \otimes TM^{\C}$, we have
\begin{equation*}
\begin{split}
\overline{D_{TM}}^2(\alpha^{(i)} \otimes e_i) &= {D_{TM}}^2(\alpha^{(i)} \otimes e_i) + \frac{13}{36}\alpha^{(i)} \otimes e_i - \frac{3}{2}\psi \cdot \alpha^{(i)} \otimes e_i \\
&\quad + \frac{5}{9} (e_i \mathbin{\lrcorner} e_j \mathbin{\lrcorner} \psi) \cdot \alpha^{(i)} \otimes e_j  - \frac{2}{9}e_je_i \cdot \alpha^{(i)} \otimes e_j \\
&\quad + \frac{2}{3}(\Id \otimes A_{e_j})\bar{\nabla}_{e_j}(\alpha^{(i)} \otimes e_i) + (e_j \mathbin{\lrcorner} \phi \cdot \otimes \Id) \bar{\nabla}_{e_j}(\alpha^{(i)} \otimes e_i).
\end{split}
\end{equation*}
\end{theorem}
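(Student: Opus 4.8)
The plan is to obtain $(\ref{eq: square of twisted dirac difference})$ by subtracting two Weitzenböck-type expansions, and then to specialise the resulting operator identity to the local section $\alpha^{(i)}\otimes e_i$. The first expansion is $(\ref{eq: relation between twisted dirac and rough Laplacian})$, which writes $\overline{D_{TM}}^2$ through $\bar\nabla^{\ast}\bar\nabla$, the spinorial curvature $(\ref{eq: Lichnerowicz lemma2})$ of the canonical connection, and the torsion term $\tfrac23\big((e_j\mathbin{\lrcorner}\phi)\cdot\otimes\Id\big)\bar\nabla_{e_j}$. The second is its Levi-Civita analogue: the classical twisted Weitzenböck formula $D_{TM}^2 = \nabla^{\ast}\nabla + \tfrac12 e_je_k\cdot R_S(e_j,e_k)\otimes\Id + \tfrac12 e_je_k\cdot\otimes R(e_j,e_k)$, whose spinorial part equals the Lichnerowicz constant $\tfrac14\scal$ (with $\scal=42$) and which carries no torsion term because the Levi-Civita connection is torsion-free. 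Subtracting the second from the first, rewriting $\bar\nabla^{\ast}\bar\nabla-\nabla^{\ast}\nabla$ by $(\ref{eq: rough Laplacian difference2})$, inserting the curvature values, and grouping the outcome into a scalar, the $\psi\cdot\otimes\Id$ term, the $e_j\mathbin{\lrcorner}\phi\cdot\otimes A_{e_j}$ term, the two first-order terms $(\Id\otimes A_{e_j})\bar\nabla_{e_j}$ and $(e_j\mathbin{\lrcorner}\phi\cdot\otimes\Id)\bar\nabla_{e_j}$, and the curvature remainder $\tfrac12 e_je_k\cdot\otimes\big(\bar R(e_j,e_k)-R(e_j,e_k)\big)$ left intact, yields $(\ref{eq: square of twisted dirac difference})$. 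With the four inputs in hand this step is just bookkeeping of numerical coefficients.

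For the explicit componentwise identity I would feed $\alpha^{(i)}\otimes e_i$ into $(\ref{eq: square of twisted dirac difference})$. The two first-order terms are left exactly as displayed. The remaining two algebraic terms must be unwound: in $\tfrac19 e_j\mathbin{\lrcorner}\phi\cdot\otimes A_{e_j}$ one expands $A_{e_j}e_i$ through $g(A_{e_j}e_i,Z)=\phi(Z,e_j,e_i)$ and, using linearity of contraction, reorganises the $j$-sum into a Clifford action of $2$-forms of the form $(A_{e_l}e_i)\mathbin{\lrcorner}\phi$; in $\tfrac12 e_je_k\cdot\otimes(\bar R-R)(e_j,e_k)$ one substitutes Lemma \ref{lem: difference curvatures}, so that $(\bar R-R)(e_j,e_k)e_i=\tfrac19\{4A_{A_{e_j}e_k}e_i-3g(e_j,e_i)e_k+3g(e_k,e_i)e_j\}$, after which the two $g$-terms reorganise (by Clifford reordering) into a scalar plus a multiple of $e_je_i\cdot\alpha^{(i)}\otimes e_j$, while the $A_{A_{e_j}e_k}e_i$ term is symmetrised by $(\ref{eq2})$, collapsed by $(\ref{eq1})$ into terms in $g$ and $\chi$, and converted to $\psi$-contractions by $(\ref{eq9})$ and $(\ref{eq12})$; the Schur relations $(\ref{Schur w})$--$(\ref{Schur gamma})$ are invoked to drop the pieces lying in the wrong $\G_2$-summand. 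Collecting the result into a multiple of $\alpha^{(i)}\otimes e_i$, of $\psi\cdot\alpha^{(i)}\otimes e_i$, of $(e_i\mathbin{\lrcorner}e_j\mathbin{\lrcorner}\psi)\cdot\alpha^{(i)}\otimes e_j$, and of $e_je_i\cdot\alpha^{(i)}\otimes e_j$, and simplifying, produces the displayed formula.

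The main obstacle is precisely this last reduction. The term $\tfrac12 e_je_k\cdot\otimes(\bar R-R)(e_j,e_k)$ generates, via $A_{A_{e_j}e_k}e_i$, an expression in which a Clifford product $e_je_k\cdot$ on the spinor slot is entangled with a triple cross product on the vector slot; forcing this, together with $\tfrac19 e_j\mathbin{\lrcorner}\phi\cdot\otimes A_{e_j}$, into a common normal form so that the cancellations leave exactly the coefficients $\tfrac{13}{36}$, $-\tfrac32$, $\tfrac59$, $-\tfrac29$ requires the algebraic machinery of Subsections \ref{subsec: alg results on nearly G2}--\ref{subsec: action of cross product} to be used in concert, and it is here that sign and normalization slips are easiest. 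Everything upstream of it --- the two Weitzenböck expansions, the rough-Laplacian comparison, and the collection of terms in the operator identity --- is routine.
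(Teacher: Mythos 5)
Your proposed route is the same as the paper's: expand $\overline{D_{TM}}^2$ by equations $(\ref{eq: relation between twisted dirac and rough Laplacian})$ and $(\ref{eq: rough Laplacian difference2})$, subtract the Levi-Civita Weitzenb\"ock formula for $D_{TM}^2$, then unwind the $\tfrac19 e_j\mathbin{\lrcorner}\phi\cdot\otimes A_{e_j}$ term via $(\ref{eq9})$ and the curvature difference via Lemma $\ref{lem: difference curvatures}$ together with $(\ref{eq1})$. This is exactly how the paper proceeds, and your description of how to reduce the two algebraic terms to the four displayed summands matches the paper's calculation.

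There is, however, a concrete numerical discrepancy you should not gloss over. You assert that the spinorial part of the Levi-Civita twisted Weitzenb\"ock formula is the Lichnerowicz constant $\tfrac14\scal=\tfrac{21}{2}$. The paper instead cites \cite[p.107]{Yasushi} for the value $\tfrac{15}{2}$, i.e.\ the paper takes $D_{TM}^2=\nabla^{\ast}\nabla+\tfrac{15}{2}+\tfrac12 e_je_k\cdot\otimes R(e_j,e_k)$. The scalar in the theorem you are proving is exactly $\tfrac{97}{12}-\tfrac{15}{2}=\tfrac{7}{12}$ (and consequently $\tfrac{7}{12}-\tfrac19-\tfrac19=\tfrac{13}{36}$ in the second display). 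If you compute as you announce, with $\tfrac{21}{2}$ in place of $\tfrac{15}{2}$, you will instead obtain $\tfrac{97}{12}-\tfrac{21}{2}=-\tfrac{29}{12}$ and $-\tfrac{95}{36}$, which do not match the statement. So your outline, carried out literally, cannot reproduce the theorem as stated. Note that your value $\tfrac{21}{2}$ is the one forced by exactly the same Weitzenb\"ock computation that the paper performs for the canonical connection: from $(\ref{eq: Lichnerowicz lemma2})$ one gets $\tfrac12 e_ie_j\bar R_S(e_i,e_j)=\tfrac{28}{3}-\tfrac43\psi\cdot=\tfrac{\overline{\scal}}{4}-\tfrac43\psi\cdot$, and the Levi-Civita analogue is $\tfrac12 e_ie_jR_S(e_i,e_j)=\tfrac{\scal}{4}=\tfrac{21}{2}$, not $\tfrac{15}{2}$. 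You therefore need either to verify that \cite[p.107]{Yasushi} really has the constant $\tfrac{15}{2}$ (in which case your appeal to the ``classical'' Lichnerowicz constant is the wrong justification and you must use the cited formula instead), or to flag that the stated scalar coefficients $\tfrac{7}{12}$ and $\tfrac{13}{36}$ cannot be reached by the standard $\tfrac14\scal$. Either way, leaving the constant as an unremarked ``bookkeeping'' step is a gap in the write-up: it is precisely the place where your proposal and the statement diverge. (For what it is worth, tracing the discrepancy downstream shows it does not change the conclusion of Lemma $\ref{lemma: alpha_0}$ or of Theorem~\ref{thmB}, since the relevant eigenvalue remains negative; but that observation is not part of your proposal and would need to be argued.)
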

\begin{proof}
Due to \cite[p.107]{Yasushi}, the Lichnerowicz formula for the twisted Dirac operator is 
\[
{D_{TM}}^2 = \nabla^{\ast}\nabla + \frac{15}{2} + \frac{1}{2}e_je_k\cdot \otimes R(e_j,e_k).
\]
Using the equation $(\ref{eq: relation between twisted dirac and rough Laplacian})$ and $(\ref{eq: rough Laplacian difference2})$, we obtain
\begin{equation*}
\begin{split}
\overline{D_{TM}}^2 &= \bar{\nabla}^{\ast}\bar{\nabla} + \frac{28}{3} - \frac{4}{3}\psi\cdot \otimes \Id + \frac{2}{3}\big( (e_j \mathbin{\lrcorner} \phi) \cdot \otimes \Id \big) \bar{\nabla}_{e_j} + \frac{1}{2}e_je_k \cdot \otimes \bar{R}(e_j,e_k) \\
&= \nabla^{\ast}\nabla + \frac{97}{12} - \frac{3}{2}\psi \cdot \otimes \Id + \frac{1}{9}(e_j \mathbin{\lrcorner} \phi) \cdot \otimes A_{e_j} + \frac{2}{3}(\Id \otimes A_{e_j})\bar{\nabla}_{e_j} \\
&\quad + \big((e_j \mathbin{\lrcorner} \phi) \cdot \otimes \Id \big) \bar{\nabla}_{e_j} + \frac{1}{2}e_je_k \cdot \otimes \bar{R}(e_j,e_k) \\
&= {D_{TM}}^2 + \frac{7}{12} - \frac{3}{2}\psi \cdot \otimes \Id + \frac{1}{9}(e_j \mathbin{\lrcorner} \phi) \cdot \otimes A_{e_j} + \frac{2}{3}(\Id \otimes A_{e_j})\bar{\nabla}_{e_j} \\
&\quad + \big((e_j \mathbin{\lrcorner} \phi) \cdot \otimes \Id \big) \bar{\nabla}_{e_j} + \frac{1}{2}e_je_k \cdot \otimes \big( \bar{R}(e_j,e_k) - R(e_j,e_k) \big).
\end{split}
\end{equation*}
We specifically compute certain two terms in the equation $(\ref{eq: square of twisted dirac difference})$ for a local section $\alpha^{(i)} \otimes e_i \in \Gamma(S_{1/2} \otimes TM^{\C})$. The first is
\begin{eqnarray*}
&&\frac{1}{9}(e_j \mathbin{\lrcorner} \phi) \cdot \alpha^{(i)} \otimes A_{e_j}e_i = \frac{1}{9}g(e_l,A_{e_j}e_i) (e_j \mathbin{\lrcorner} \phi) \cdot \alpha^{(i)} \otimes e_l \\
&=& - \frac{1}{9}\left( A_{e_l}e_i \mathbin{\lrcorner} \phi \right) \cdot \alpha^{(i)} \otimes e_l = - \frac{1}{9}\big( (e_i \mathbin{\lrcorner} e_l \mathbin{\lrcorner} \phi) \mathbin{\lrcorner} \phi \big) \cdot \alpha^{(i)} \otimes e_l \\
&\overset{(\ref{eq9})}{=}& \frac{1}{9}(e_i \mathbin{\lrcorner} e_l \mathbin{\lrcorner} \psi) \cdot \alpha^{(i)} \otimes e_l + \frac{1}{9}(e_i \wedge e_l) \cdot \alpha^{(i)} \otimes e_l \\
&=& \frac{1}{9}(e_i \mathbin{\lrcorner} e_j \mathbin{\lrcorner} \psi) \cdot \alpha^{(i)} \otimes e_j - \frac{1}{9}e_je_i \cdot \alpha^{(i)} \otimes e_j - \frac{1}{9}\alpha^{(i)} \otimes e_i.
\end{eqnarray*}
The second is the curvature term, and we get
\begin{eqnarray*}
&&\frac{1}{2}e_je_k \cdot \alpha^{(i)} \otimes \big( \bar{R}(e_j,e_k)e_i - R(e_j,e_k)e_i \big) \\
&\overset{{\rm Lem \;} \ref{lem: difference curvatures}}{=}&\frac{1}{2}e_je_k \cdot \alpha^{(i)} \otimes \frac{1}{9} \big( 4A_{A_{e_j}e_k}e_i - 3g(e_j,e_i)e_k + 3g(e_k,e_i)e_j \big) \\
&\overset{(\ref{eq1})}{=}& - \frac{2}{9}e_je_k \cdot \alpha^{(i)} \otimes \left( -g(e_i,e_j)e_k + g(e_i,e_k)e_j - \frac{1}{2}\chi(e_i,e_j,e_k) \right) \\
&& + \frac{1}{3}e_je_i \cdot \alpha^{(i)} \otimes e_j + \frac{1}{3}\alpha^{(i)} \otimes e_i \\
&=& -\frac{1}{9}e_j \cdot \chi(e_i,e_j,e_l) \cdot \alpha^{(i)} \otimes e_l - \frac{1}{9}e_je_i \cdot \alpha^{(i)} \otimes e_j - \frac{1}{9}\alpha^{(i)} \otimes e_i \\
&=& \frac{4}{9}(e_i \mathbin{\lrcorner} e_j \mathbin{\lrcorner} \psi) \cdot \alpha^{(i)} \otimes e_j - \frac{1}{9}e_je_i \cdot \alpha^{(i)} \otimes e_j - \frac{1}{9}\alpha^{(i)} \otimes e_i.
\end{eqnarray*}
\end{proof}

\section{Infinitesimal deformations of Killing spinors on nearly parallel $\G_2$-manifolds} \label{sec: infinitesimal deformation of Killing spinor}
Let $(M,\phi,g)$ be a compact nearly parallel $\G_2$-manifold with $\scal = 42$, and let $\kappa_0$ be a Killing spinor corresponding to the nearly parallel $\G_2$-structure $\phi$. The goal of this section is to identify the space of the infinitesimal deformations of the Killing spinor, and this is one of the main result in this paper.

 As we see in Subsection \ref{subsec: alg results on nearly G2}, we already know the decomposition of the spinor bundle $S_{1/2}$. Also, we have a $\G_2$-irreducible decomposition of $S_{1/2} \otimes TM^{\C}$:
\begin{equation} \label{eq: decomp of S otimes TM}
S_{1/2} \otimes TM^{\C} \cong \wedge^1M \oplus \wedge^2_7M \oplus \wedge^2_{14}M \oplus \Sym_0M \oplus \C g.
\end{equation}

Let us recall an infinitesimal deformation of the Killing spinor defined in Subsection \ref{subsec: infinitesimal deformations of killing spinors}. A pair $(H, \kappa)$ is an infinitesimal deformation of the Killing spinor $\kappa_0$ with constant $c$ if the symmetric endomorphism $H:TM \to TM$ and the spinor $\kappa$ satisfy the following conditions:
 \begin{enumerate}
 \renewcommand{\labelenumi}{(\roman{enumi})}
 \item $\kappa$ is a Killing spinor with constant $c$. 
 \item $\tr H = \delta H=0$.
 \item $D_{TM} \Psi^{(H, \kappa_0)} = nc\Psi^{(H, \kappa_0)}$.
 \end{enumerate}
Note that we deal with the case of the dimension $n=7$ and the constant $c=\frac{1}{2}$. The symmetric endomorphism $H$ is represented locally as ${\alpha_1}^{(i)} \odot e_i$ using a local orthonormal frame $\{ e_i \}$ which is $\bar{\nabla}$-parallel at a point $x \in M$ for simplicity, and vector fields ${\alpha_1}^{(i)}$. Here, the symbol $\odot$ is the symmetric tensor product defined by ${\alpha_1}^{(i)} \odot e_i \coloneqq {\alpha_1}^{(i)} \otimes e_i + e_i \otimes {\alpha_1}^{(i)}$. With this local representation, the condition (iii) becomes
\begin{equation} \label{D_{TM}=7/2}
D_{TM}\left( {\alpha_1}^{(i)} \cdot \kappa_0 \otimes e_i + e_i \cdot \kappa_0 \otimes {\alpha_1}^{(i)} \right) = \frac{7}{2}\left( {\alpha_1}^{(i)} \cdot \kappa_0 \otimes e_i + e_i \cdot \kappa_0 \otimes {\alpha_1}^{(i)} \right).
\end{equation}
Using the equation $(\ref{eq: twisted dirac relation})$, we rewrite the equation $(\ref{D_{TM}=7/2})$ as
\begin{equation} \label{D_{TM}=7/2'}
\begin{split}
&\overline{D_{TM}}\left( {\alpha_1}^{(i)} \cdot \kappa_0 \otimes e_i + e_i \cdot \kappa_0 \otimes {\alpha_1}^{(i)} \right) + \frac{1}{2}\phi \cdot {\alpha_1}^{(i)} \cdot \kappa_0 \otimes e_i + \frac{1}{2}\phi \cdot e_i \cdot \kappa_0 \otimes {\alpha_1}^{(i)} \\
&+ \frac{1}{3}e_j \cdot {\alpha_1}^{(i)} \cdot \kappa_0 \otimes A_{e_j}e_i + \frac{1}{3}e_j \cdot e_i \cdot \kappa_0 \otimes A_{e_j}{\alpha_1}^{(i)} \\
&= \frac{7}{2}\left( {\alpha_1}^{(i)} \cdot \kappa_0 \otimes e_i + e_i \cdot \kappa_0 \otimes {\alpha_1}^{(i)} \right).
\end{split}
\end{equation}
Using the formula $(\ref{eq: clifford action})$, we compute each term in the left-hand side of the equation $(\ref{D_{TM}=7/2'})$. For the first term of the equation $(\ref{D_{TM}=7/2'})$, we have
\begin{align*}
\overline{D_{TM}}({\alpha_1}^{(i)} \cdot \kappa_0 \otimes e_i) &= - \bar{\nabla}_{e_j} \big( g(e_j,{\alpha_1}^{(i)})\kappa_0 \otimes e_i \big) + \bar{\nabla}_{e_j} \big(A_{e_j}{\alpha_1}^{(i)} \cdot \kappa_0 \otimes e_i \big) \\
&\quad + g(\bar{\nabla}_{e_j}e_j, {\alpha_1}^{(i)})\kappa_0 \otimes e_i - A_{\bar{\nabla}_{e_j}e_j}{\alpha_1}^{(i)} \cdot \kappa_0 \otimes e_i, \\
\overline{D_{TM}}(e_i \cdot \kappa_0 \otimes {\alpha_1}^{(i)}) &= - \bar{\nabla}_{e_j} \big( g(e_j,e_i)\kappa_0 \otimes {\alpha_1}^{(i)} \big) + \bar{\nabla}_{e_j} \big(A_{e_j}e_i \cdot \kappa_0 \otimes {\alpha_1}^{(i)} \big) \\
&\quad + g(\bar{\nabla}_{e_j}e_j, e_i)\kappa_0 \otimes {\alpha_1}^{(i)} - A_{\bar{\nabla}_{e_j}e_j}e_i \cdot \kappa_0 \otimes {\alpha_1}^{(i)}.
\end{align*}
Similarly, for the second and third term of the equation $(\ref{D_{TM}=7/2'})$, we have
\begin{align*}
\phi \cdot {\alpha_1}^{(i)} \cdot \kappa_0 \otimes e_i &= {\alpha_1}^{(i)} \cdot \kappa_0 \otimes e_i, \\
\phi \cdot e_i \cdot \kappa_0 \otimes {\alpha_1}^{(i)} &= e_i \cdot \kappa_0 \otimes {\alpha_1}^{(i)}.
\end{align*}
Finally, for the fourth and fifth term of the equation $(\ref{D_{TM}=7/2'})$, we have
\begin{align*}
e_j \cdot {\alpha_1}^{(i)} \cdot \kappa_0 \otimes A_{e_j}e_i &= \kappa_0 \otimes A_{e_i}{\alpha_1}^{(i)} + A_{e_j}{\alpha_1}^{(i)} \cdot \kappa_0 \otimes A_{e_j}e_i, \\
e_j \cdot e_i \cdot \kappa_0 \otimes A_{e_j}{\alpha_1}^{(i)} &= \kappa_0 \otimes A_{{\alpha_1}^{(i)}}e_i + A_{e_j}e_i \cdot \kappa_0 \otimes A_{e_j}{\alpha_1}^{(i)}.
\end{align*}
We note that 
\begin{equation*}
\begin{split}
\delta ({\alpha_1}^{(i)} \odot e_i) &\coloneqq -g(e_j,\bar{\nabla}_{e_j}{\alpha_1}^{(i)})e_i - g(e_j,{\alpha_1}^{(i)})\bar{\nabla}_{e_j}e_i \\
&\quad -g(e_j,e_i)\bar{\nabla}_{e_j}{\alpha_1}^{(i)} - g(e_j,\bar{\nabla}_{e_j}e_i){\alpha_1}^{(i)} \\
&= -\bar{\nabla}_{e_j}\left( g(e_j,{\alpha_1}^{(i)})e_i \right) - \bar{\nabla}_{e_j}\left( g(e_j,e_i){\alpha_1}^{(i)} \right) \quad ({\rm at \;} x)
\end{split}
\end{equation*}
holds and summarize the above calculations, then (at $x$) the equation $(\ref{D_{TM}=7/2'})$ becomes
\begin{equation} \label{D_{TM}=7/2''}
\begin{split}
&\kappa_0 \otimes \delta({\alpha_1}^{(i)} \odot e_i) + \bar{\nabla}_{e_j} \big(A_{e_j}{\alpha_1}^{(i)} \cdot \kappa_0 \otimes e_i \big) + \bar{\nabla}_{e_j} \big(A_{e_j}e_i \cdot \kappa_0 \otimes {\alpha_1}^{(i)} \big) \\
&+ \frac{1}{3}A_{e_j}{\alpha_1}^{(i)} \cdot \kappa_0 \otimes A_{e_j}e_i + \frac{1}{3}A_{e_j}e_i \cdot \kappa_0 \otimes A_{e_j}{\alpha_1}^{(i)} \\
&=3\left( {\alpha_1}^{(i)} \cdot \kappa_0 \otimes e_i + e_i \cdot \kappa_0 \otimes {\alpha_1}^{(i)} \right).
\end{split}
\end{equation}
We project the equation $(\ref{D_{TM}=7/2''})$ onto the irreducible component $\Sym_0M$ along the decomposition $(\ref{eq: decomp of S otimes TM})$. Then we get
\begin{equation} \label{D_{TM}=7/2, Sym_0}
(A_{e_j} \otimes \Id)\bar{\nabla}_{e_j}\left( {\alpha_1}^{(i)} \odot e_i \right) + \frac{1}{3}\left( A_{e_j}{\alpha_1}^{(i)} \otimes A_{e_j}e_i + A_{e_j}e_i \otimes A_{e_j}{\alpha_1}^{(i)} \right) = 3\left( {\alpha_1}^{(i)} \odot e_i \right).
\end{equation}
Let us calculate the second term on the left side of the equation $(\ref{D_{TM}=7/2, Sym_0})$. 
\begin{eqnarray*}
A_{e_j}{\alpha_1}^{(i)} \otimes A_{e_j}e_i &=& e_j \otimes A_{e_i}A_{e_j}{\alpha_1}^{(i)} \\
&\overset{(\ref{eq1})}{=}& e_j \otimes \left( -g(e_i,e_j){\alpha_1}^{(i)} + g(e_i,{\alpha_1}^{(i)})e_j - \frac{1}{2}\chi(e_i,e_j,{\alpha_1}^{(i)}) \right) \\
&=& -e_i \otimes {\alpha_1}^{(i)} - \frac{1}{4}e_j \odot \chi(e_i,e_j,{\alpha_1}^{(i)}) - \frac{1}{4}e_j \wedge \chi(e_i,e_j,{\alpha_1}^{(i)}) \\
&=& -e_i \otimes {\alpha_1}^{(i)} + e_i \mathbin{\lrcorner} {\alpha_1}^{(i)} \mathbin{\lrcorner} \psi \overset{(\ref{eq12})}{=} -e_i \otimes {\alpha_1}^{(i)} + \ast(\phi \wedge {\alpha_1}^{(i)} \wedge e_i), \\
A_{e_j}e_i \otimes A_{e_j}{\alpha_1}^{(i)} &=& -{\alpha_1}^{(i)} \otimes e_i + \ast(\phi \wedge e_i \wedge {\alpha_1}^{(i)}).
\end{eqnarray*}
Furthermore, using the two actions $A_{e_j \star}$ and $\widetilde{A_{e_j \star}}$ on 2-tensors introduced in Subsection \ref{subsec: action of cross product}, we find that the equation $(\ref{D_{TM}=7/2, Sym_0})$ becomes
\begin{equation} \label{D_{TM}=7/2, Sym_0'}
\frac{1}{2}A_{e_j \star}\bar{\nabla}_{e_j}H + \frac{1}{2}\widetilde{A_{e_j \star}}\bar{\nabla}_{e_j}H = \frac{10}{3}H.
\end{equation}
Let $\gamma$ be a section of $\wedge^3_{27}M$ defined by $\gamma \coloneqq \mathbf{i}(H)$. Applying the result of Proposition \ref{prop: cross product action2} to the equation $(\ref{D_{TM}=7/2, Sym_0'})$, we get
\begin{equation} \label{D_{TM}=7/2, Sym_0''}
A_{e_j \star}\bar{\nabla}_{e_j}H - \frac{1}{3}\delta H \mathbin{\lrcorner} \phi + \frac{1}{2}\left( \delta\gamma \right)_{\wedge^2_{14}} = \frac{20}{3}H.
\end{equation}
The equation $(\ref{D_{TM}=7/2, Sym_0''})$ is equivalent to the system
\begin{equation} \label{system a}
\begin{cases}
- \frac{1}{3}\delta H \mathbin{\lrcorner} \phi + \frac{1}{2}\left( \delta\gamma \right)_{\wedge^2_{14}} = 0, \\
A_{e_j \star}\bar{\nabla}_{e_j}H = \frac{20}{3}H.
\end{cases}
\end{equation}
Since $\left( \delta \gamma \right)_{\wedge^2_7} = \frac{4}{3}\delta H \mathbin{\lrcorner} \phi$ holds, $\delta H$ vanishes if and only if $\delta\gamma$ is in $\Omega^2_{14}M$ (i.e. $\delta\gamma = \left( \delta\gamma \right)_{\wedge^2_{14}}$). Thus, the first equation of $(\ref{system a})$ becomes $\delta \gamma =0$. As for the second equation of $(\ref{system a})$, sending both sides to $\wedge^3_{27}M$ by the isomorphism $\mathbf{i}$ and employing the result of Proposition \ref{prop: cross product action1}, we find
\begin{equation} \label{eq: main infinitesimal deformation of Killing spinor}
\ast d\gamma = -4\gamma,
\end{equation}
where we use the fact that $\delta H=0$ is equivalent to $\ast d\gamma = \left( \ast d\gamma \right)_{\wedge^3_{27}}$. The section $\gamma \in \Omega^3_{27}M$ satisfying the equation $(\ref{eq: main infinitesimal deformation of Killing spinor})$ is automatically co-closed. We obtain the main result.

\let\temp\thetheorem
\renewcommand{\thetheorem}{\ref{thmA}}

\begin{theorem}
Let $(M,\phi,g)$ be a compact nearly parallel $\G_2$-manifold and let $\kappa_0$ be a Killing spinor with constant $\frac{1}{2}$ corresponding to the nearly parallel $\G_2$-structure $\phi$. The space of infinitesimal deformations of the Killing spinor $\kappa_0$ is isomorphic to the direct sum of the space $D_3 \coloneqq \left\{ \gamma \in \Omega^3_{27}M \mid \ast d\gamma = -4\gamma \right\}$ and the space $K_+$ of all Killing spinors.
\end{theorem}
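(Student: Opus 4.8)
The plan is to feed Wang's criterion (Proposition~\ref{prop: infinitesimal deformation of Killing spinor by Wang}) through the algebraic dictionary of Subsection~\ref{subsec: alg results on nearly G2} and the operator identities of Section~\ref{sec: comparison of differential operators}. By that proposition an infinitesimal deformation of $\kappa_0$ is a pair $(H,\kappa)$ whose two entries obey \emph{decoupled} conditions: $\kappa$ ranges freely over $K_+$, while $H\in\Gamma(\Sym_0 M)$ must satisfy $\delta H=0$ and $D_{TM}\Psi^{(H,\kappa_0)}=\tfrac72\Psi^{(H,\kappa_0)}$ (here $n=7$ and $c=\tfrac12$, and $\tr H=0$ is automatic). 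Thus the deformation space is $V\oplus K_+$ with $V:=\{H\in\Gamma(\Sym_0 M)\mid \delta H=0,\ D_{TM}\Psi^{(H,\kappa_0)}=\tfrac72\Psi^{(H,\kappa_0)}\}$, and the whole statement reduces to showing that the bundle isomorphism $\mathbf{i}\colon\Sym_0 M\xrightarrow{\sim}\wedge^3_{27}M$ carries $V$ onto $D_3$. Note that by the Clifford action formula~\eqref{eq: clifford action}, $\Psi^{(H,\kappa_0)}=H(e_i)\cdot\kappa_0\otimes e^i$ lies in the $\Sym_0 M$-summand of~\eqref{eq: decomp of S otimes TM} and is the image of $H$ there, so the natural partner of $H$ is $\gamma:=\mathbf{i}(H)$.

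For the forward implication, which is the computational heart, I would write $H$ locally as $\alpha_1^{(i)}\odot e_i$ in a frame $\bar\nabla$-parallel at a point, convert $D_{TM}$ into the canonical-connection operator $\overline{D_{TM}}$ via~\eqref{eq: twisted dirac relation}, expand each Clifford product with~\eqref{eq: clifford action}, and collect terms; this is exactly the reduction of condition (iii) to the pointwise identity~\eqref{D_{TM}=7/2''}. Decomposing~\eqref{D_{TM}=7/2''} along~\eqref{eq: decomp of S otimes TM}: the only term carrying the scalar part of the spinor factor is $\kappa_0\otimes\delta(\alpha_1^{(i)}\odot e_i)$, so the $\wedge^1 M$-component of the equation reads $\delta H=0$ (hence (iii) itself forces (ii)); the $\C g$-component vanishes automatically, being the contraction of a symmetric tensor against $\phi$ in two of its slots; and the rest lives in $\wedge^1 M\otimes\wedge^1 M\cong\C g\oplus\wedge^2_7 M\oplus\wedge^2_{14}M\oplus\Sym_0 M$, where the cross-product term splits as $\tfrac12 A_{e_j\star}\bar\nabla_{e_j}H+\tfrac12\widetilde{A_{e_j\star}}\bar\nabla_{e_j}H$ into a symmetric and an antisymmetric piece. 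The $\Sym_0 M$-component then yields $A_{e_j\star}\bar\nabla_{e_j}H=\tfrac{20}{3}H$ and the $\wedge^2 M$-component yields $\widetilde{A_{e_j\star}}\bar\nabla_{e_j}H=0$; Propositions~\ref{prop: cross product action1} and~\ref{prop: cross product action2} turn the latter into $-\tfrac13\delta H\mathbin{\lrcorner}\phi+\tfrac12(\delta\gamma)_{\wedge^2_{14}}=0$ (so $\delta\gamma=0$ once $\delta H=0$ is used), and, after applying $\mathbf{i}$ and using~\eqref{cross product action1 Sym_0}, turn the former into $(\ast d\gamma)_{\wedge^3_{27}}=-4\gamma$, which together with $\delta H=0\iff\ast d\gamma=(\ast d\gamma)_{\wedge^3_{27}}$ becomes $\ast d\gamma=-4\gamma$; hence $\gamma\in D_3$.

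For the converse, take $\gamma\in D_3$ and set $H:=\mathbf{i}^{-1}(\gamma)$. Then $\ast d\gamma=-4\gamma$ gives $d{\ast}\gamma=-\tfrac14\,d\,d\gamma=0$, so $\delta\gamma=0$ and therefore $\delta H=0$; since each of the projections above was an equivalence, retracing them shows $D_{TM}\Psi^{(H,\kappa_0)}=\tfrac72\Psi^{(H,\kappa_0)}$, i.e.\ (iii) (and (ii) is then automatic). So $\mathbf{i}$ restricts to a bijection $V\to D_3$, and combining with the independent $K_+$-factor gives the isomorphism with $D_3\oplus K_+$.

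The main obstacle I anticipate is the forward direction's bookkeeping: getting the Clifford expansion that produces~\eqref{D_{TM}=7/2''}, and then cleanly isolating its $\Sym_0 M$- and $\wedge^2 M$-parts through the two extensions $A_{\star}$ and $\widetilde{A_{\star}}$ of the cross product together with the identities of Subsection~\ref{subsec: action of cross product}, so that exactly the two equations $A_{e_j\star}\bar\nabla_{e_j}H=\tfrac{20}{3}H$ and $\widetilde{A_{e_j\star}}\bar\nabla_{e_j}H=0$ emerge and no spurious constraint is generated. Once this and the arithmetic of Propositions~\ref{prop: cross product action1}--\ref{prop: cross product action2} are in place, the identification of $V$ with $D_3$, and hence of the deformation space with $D_3\oplus K_+$, follows at once.
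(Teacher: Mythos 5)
Your proposal is correct and follows essentially the same route as the paper: Wang's criterion, the local expansion of $\Psi^{(H,\kappa_0)}$ in a $\bar\nabla$-parallel frame, conversion of $D_{TM}$ to $\overline{D_{TM}}$ via~\eqref{eq: twisted dirac relation}, the Clifford computation leading to~\eqref{D_{TM}=7/2''}, projection onto the summands of~\eqref{eq: decomp of S otimes TM}, the symmetric/antisymmetric split of $(A_{e_j}\otimes\Id)\bar\nabla_{e_j}H$ into $A_{e_j\star}$ and $\widetilde{A_{e_j\star}}$, and finally Propositions~\ref{prop: cross product action1}--\ref{prop: cross product action2} to reach $\ast d\gamma=-4\gamma$. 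Your extra remark that the $\wedge^1M$-projection already forces $\delta H=0$ (so condition (iii) subsumes (ii)) is a valid observation that the paper does not stress but which is consistent with its chain of equivalences.
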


\let\thetheorem\temp
\addtocounter{theorem}{-1}

It is proven in \cite[Theorem 3.5]{AlexandrovSemmelmann} that the space of the infinitesimal deformations of a nearly parallel $\G_2$-structure $(\phi,g)$ is given by the direct sum of the spaces 
\[
D_3 \quad {\rm and} \quad D_1 \coloneqq \left\{ f_1 \in \Omega^1M \mid \nabla f_1 = -f_1 \mathbin{\lrcorner} \phi \right\}.
\]
Furthermore, the dimension of this space $D_1$ is specified in \cite[Theorem 4.2]{AlexandrovSemmelmann} as follows.
\begin{center}
$(M,\phi,g)$ is of type 1 if and only if $\dim D_1=0$, \\
$(M,\phi,g)$ is of type 2 if and only if $\dim D_1=1$, \\
$(M,\phi,g)$ is of type 3 if and only if $\dim D_1=2$.
\end{center}
We clarify the relationship between the spaces $D_1$ and $K_+$. An one-dimensional vector space spanned by the Killing spinor $\kappa_0$ is represented by $\langle \kappa_0 \rangle$, and the orthogonal complement of $\langle \kappa_0 \rangle$ in the space $K_+$ is represented by $\langle \kappa_0 \rangle^{\perp}$. Then, the map $D_1 \ni f_1 \mapsto f_1 \cdot \kappa_0 \in \langle \kappa_0 \rangle^{\perp}$ gives an isomorphism between the spaces $D_1$ and $\langle \kappa_0 \rangle^{\perp}$. Also, the deformation to the direction of $\kappa_0$ is a trivial deformation of the Killing spinor. Thus, Theorem \ref{thmA} is a reproof of the result of \cite[Theorem 3.5]{AlexandrovSemmelmann} through the Killing spinor.

\section{Rarita-Schwinger fields on nearly parallel $\G_2$-manifolds} \label{sec: Rarita-Schwinger fields}
We define Rarita-Schwinger fields using the twisted Dirac operator defined in Subsection \ref{subsec: infinitesimal deformations of killing spinors}. {\it A Rarita-Schwinger field} is a section $\varphi$ of $S_{1/2} \otimes TM^{\C}$ that satisfies
\[
\varphi \in \Gamma(S_{3/2}) \quad {\rm and} \quad D_{TM}\varphi=0.
\]
The properties and applications of Rarita-Schwinger fields are listed in \cite{BarBandara}, \cite{BarMazzeo}, \cite{YasushiSemmelmann}, and \cite{OhnoTomihisa}, for example.

We can identify the space of the Rarita-Schwinger fields through a similar technique as when we investigate infinitesimal deformations of Killing spinors in Section \ref{sec: infinitesimal deformation of Killing spinor}. This is the other main theorem in this paper.

We denote the corresponding Killing spinor to the nearly parallel $\G_2$-structure $(\phi,g)$ by $\kappa_0$. Any element of $\Gamma(S_{1/2} \otimes TM^{\C})$ is expressed locally as $\alpha^{(i)} \otimes e_i$ using a local orthonormal frame $\{ e_i \}$ and spinor fields $\alpha^{(i)}$. This $\alpha^{(i)} \in \Gamma(S_{1/2})$ is decomposed to 
\begin{equation} \label{alpha decomp}
\alpha^{(i)} = \left( {\alpha_0}^{(i)} + {\alpha_1}^{(i)} \right) \cdot \kappa_0 \in \left( \Omega^0M \oplus \Omega^1M \right) \cdot \kappa_0.
\end{equation} 

\begin{lemma}
Let $\alpha^{(i)} \otimes e_i$ be in $S_{1/2} \otimes TM^{\C}$. Then $\alpha^{(i)} \otimes e_i$ is in $S_{3/2}$ if and only if
\begin{numcases}
{}
\tr({\alpha_1}^{(i)} \otimes e_i)=0, & \label{eq: S_{3/2} condition omega^0} \\ 
{\alpha_0}^{(i)} e_i + A_{e_i}{\alpha_1}^{(i)}=0. \label{eq: S_{3/2} condition omega^1}
\end{numcases}
\end{lemma}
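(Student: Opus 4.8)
The plan is to unwind the definition of $S_{3/2}$ and simply evaluate Clifford multiplication on the given local expression. Recall that $S_{3/2}$ is, by construction, the kernel of the Clifford contraction $S_{1/2}\otimes TM^{\C}\ni \zeta\otimes X\mapsto X\cdot\zeta$. Hence $\alpha^{(i)}\otimes e_i$ lies in $S_{3/2}$ if and only if $e_i\cdot\alpha^{(i)}=0$ (sum over $i$ by the Einstein convention), and the whole statement reduces to computing this one spinor and reading off when it vanishes.

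First I would insert the decomposition $(\ref{alpha decomp})$, writing $\alpha^{(i)}=({\alpha_0}^{(i)},{\alpha_1}^{(i)})$ in the splitting $\slashed{S}\cong\Omega^0M\oplus\Omega^1M$ determined by $\kappa_0$, and extend everything $\C$-linearly (recall $S_{1/2}$ is the complexification of $\slashed{S}$). Applying the octonionic formula $(\ref{eq: clifford action})$ for Clifford multiplication of a $1$-form on a real spinor to each summand yields
\[
e_i\cdot\alpha^{(i)}=\big(-g(e_i,{\alpha_1}^{(i)}),\ {\alpha_0}^{(i)}e_i+A_{e_i}{\alpha_1}^{(i)}\big)\in\Omega^0M\oplus\Omega^1M .
\]
Since the two summands of $\slashed{S}\cong\Omega^0M\oplus\Omega^1M$ are independent, $e_i\cdot\alpha^{(i)}=0$ holds precisely when both components vanish. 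The $\Omega^0M$-component is $g(e_i,{\alpha_1}^{(i)})$, which under the identification $TM\cong T^{\ast}M$ is exactly the trace $\tr({\alpha_1}^{(i)}\otimes e_i)$ of the $2$-tensor ${\alpha_1}^{(i)}\otimes e_i$; its vanishing is condition $(\ref{eq: S_{3/2} condition omega^0})$. The $\Omega^1M$-component is ${\alpha_0}^{(i)}e_i+A_{e_i}{\alpha_1}^{(i)}$, whose vanishing is condition $(\ref{eq: S_{3/2} condition omega^1})$. This establishes both implications simultaneously.

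I do not expect any genuine obstacle here: the argument is a one-line application of $(\ref{eq: clifford action})$ together with the definition of $S_{3/2}$. The only points that need a modicum of care are keeping the two spinor components bookkept separately and recognizing $g(e_i,{\alpha_1}^{(i)})$ as the trace of ${\alpha_1}^{(i)}\otimes e_i$ under the metric identification of $TM$ with $T^{\ast}M$ used throughout the paper.
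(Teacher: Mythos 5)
Your proposal is correct and follows exactly the same route as the paper: unwind the definition of $S_{3/2}$ as the kernel of Clifford contraction, apply the octonionic formula $(\ref{eq: clifford action})$ to $e_i\cdot\alpha^{(i)}$, and read off the two component equations. The remark about $\C$-linear extension is a harmless extra precision that the paper leaves implicit.
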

\begin{proof}
By definition, $\alpha^{(i)} \otimes e_i$ is in $S_{3/2}$ if and only if $e_i \cdot \alpha^{(i)}$ vanishes. On the other hand, using the equation $(\ref{eq: clifford action})$ we get
\[
e_i \cdot \alpha^{(i)} = -g(e_i,{\alpha_1}^{(i)})\kappa_0 + \left({\alpha_0}^{(i)}e_i + A_{e_i}{\alpha_1}^{(i)} \right) \cdot \kappa_0.
\]
Since $g(e_i,{\alpha_1}^{(i)})$ is a trace part of $\alpha^{(i)} \otimes e_i$, we get the equation $(\ref{eq: S_{3/2} condition omega^0})$ and finish the proof.
\end{proof}

\begin{lemma} \label{lemma: alpha_0}
We denote a Rarita-Schwinger field $\varphi$ locally as $\alpha^{(i)} \otimes e_i$, then we obtain ${\alpha_0}^{(i)}e_i=0$.
\end{lemma}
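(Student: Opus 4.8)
The plan is to convert the two defining properties of $\varphi$ into a first-order system for the $\G_2$-components of $\varphi$ and to extract from it enough information about the $1$-form $V \coloneqq {\alpha_0}^{(i)}e_i$ — which is exactly the $\wedge^1M$-component of $\varphi$ in the splitting $(\ref{eq: decomp of S otimes TM})$, since $\varphi = \kappa_0\otimes V + {\alpha_1}^{(i)}\cdot\kappa_0\otimes e_i$ — to force $V=0$ on the compact positive-Einstein manifold $M$. First I would record that $D_{TM}\varphi=0$ together with $\varphi\in\Gamma(S_{3/2})$ forces $P^{\ast}\varphi=0$ (the $S_{1/2}$-block of $D_{TM}\varphi$), which in components is $\sum_i\nabla_{e_i}\alpha^{(i)}=0$; expanding this via $S_{1/2}\cong\wedge^0M\oplus\wedge^1M$, the Clifford action $(\ref{eq: clifford action})$, the Killing equation $\nabla_X\kappa_0=\tfrac12 X\cdot\kappa_0$, and then using $\tr T=0$ and ${\alpha_0}^{(i)}e_i=-A_{e_i}{\alpha_1}^{(i)}$ (that is, $(\ref{eq: S_{3/2} condition omega^0})$ and $(\ref{eq: S_{3/2} condition omega^1})$, which also say that, writing $T={\alpha_1}^{(i)}\otimes e_i=T^{\mathrm{sym}}+T^{\mathrm{skew}}$, one has $T^{\mathrm{sym}}\in\Sym_0M$ and that the $\wedge^2_7M$-part of $T^{\mathrm{skew}}$ is a universal constant times $V\mathbin{\lrcorner}\phi$), one obtains $\delta V=0$ and one relation among $\delta T^{\mathrm{sym}}$, $\delta T^{\mathrm{skew}}$ and $V$.

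Next I would compute $D_{TM}\varphi$ itself — conveniently by passing to $\overline{D_{TM}}$ through $(\ref{eq: twisted dirac relation})$, using $\bar\nabla\kappa_0=0$ and a $\bar\nabla$-parallel frame, and expanding Clifford products with $(\ref{eq: clifford action})$ — and project the equation $D_{TM}\varphi=0$ onto the summands $\wedge^1M$, $\mathbb{C}g$ and $\wedge^2_7M$ of $(\ref{eq: decomp of S otimes TM})$. The constraint ${\alpha_0}^{(i)}e_i=-A_{e_i}{\alpha_1}^{(i)}$ and its covariant derivative, the algebraic identities $(\ref{eq1})$, $(\ref{eq9})$, $(\ref{eq12})$ and $\sum_i A_{e_i}A_{e_i}=-6\,\Id$, the Schur relations of Subsection \ref{subsec: action of cross product}, and the standard nearly parallel $\G_2$ formulas for $d(V\mathbin{\lrcorner}\phi)$ and $\delta(V\mathbin{\lrcorner}\phi)$ (built from $dV$, $\delta V$ and algebraic terms; cf. \cite{AlexandrovSemmelmann}) should let one eliminate all $\Sym_0M$- and $\wedge^2_{14}M$-contributions and arrive at a closed system for $V$ alone: $V$ co-closed together with relations of the shape $dV=\lambda_1\,V\mathbin{\lrcorner}\phi$ and $\nabla_X V=\lambda_2 A_X V$ for explicit constants (equivalently, a single eigenvalue equation $\Delta V=\mu V$).

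Finally I would close the argument. Using $|V\mathbin{\lrcorner}\phi|^2=3|V|^2$ and $\sum_i|A_{e_i}V|^2=6|V|^2$, the two shape relations turn out to be numerically incompatible unless $V=0$ (equivalently, by the Weitzenböck identity $\|dV\|^2_{L^2}+\|\delta V\|^2_{L^2}=\|\nabla V\|^2_{L^2}+\int_M\Ric(V,V)$ with $\Ric=6g$, the eigenvalue $\mu$ produced is not attained by a nonzero co-closed $1$-form, which also rules out the Killing $1$-forms carried by the type $2$ and type $3$ manifolds); hence $V={\alpha_0}^{(i)}e_i=0$. The main obstacle is the middle step: carrying out the three projections and the $\G_2$-algebra carefully enough to verify that the $\Sym_0M$- and $\wedge^2_{14}M$-parts really decouple, and pinning down the constants $\lambda_1,\lambda_2$ (equivalently $\mu$) so that the final incompatibility holds uniformly over the type $1$/$2$/$3$ classification.
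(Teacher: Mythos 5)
Your endgame --- show that $V\coloneqq{\alpha_0}^{(i)}e_i$ satisfies an eigenvalue equation incompatible with positivity on the compact Einstein manifold --- is the right one, and your opening moves (the $S_{3/2}$-conditions $(\ref{eq: S_{3/2} condition omega^0})$, $(\ref{eq: S_{3/2} condition omega^1})$ and the $\wedge^1M$-projection of $D_{TM}\varphi=0$, which is the paper's $(\ref{eq: Rarita-Schwinger field condition important proj to wedge^0})$) line up with the paper. But the middle step of your plan has a genuine gap: projecting only the \emph{first}-order equation $D_{TM}\varphi=0$ onto $\wedge^1M$, $\C g$ and $\wedge^2_7M$ cannot produce a closed first-order system of the shape $dV=\lambda_1\,V\mathbin{\lrcorner}\phi$, $\nabla_X V=\lambda_2 A_X V$. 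The $\wedge^1M$- and $\wedge^2_7M$-projections of $\overline{D_{TM}}\varphi$ still carry covariant derivatives of the $\Sym_0M$- and $\wedge^2_{14}M$-components $H$ and $w$ of ${\alpha_1}^{(i)}\otimes e_i$, not just of $V$ --- indeed $(\ref{eq: Rarita-Schwinger field condition important proj to wedge^0})$ itself contains $\bar{\nabla}_{e_j}\bigl(g(e_j,{\alpha_1}^{(i)})e_i\bigr)$, i.e.\ $\delta H+\delta w$ --- and the one projection free of such terms (onto $\C g$) yields a single scalar relation, nowhere near a full covariant equation for $\nabla V$. No amount of $\G_2$-algebra makes the $H,w$-derivatives drop out of a first-order operator whose symbol couples all summands of $(\ref{eq: decomp of S otimes TM})$; these are genuinely independent data of $\varphi$.

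What closes the argument in the paper is the \emph{second}-order equation ${D_{TM}}^2\varphi=0$. Via the Lichnerowicz-type identities $(\ref{eq: relation between twisted dirac and G2-Laplace})$ and $(\ref{eq: square of twisted dirac difference})$ it is recast as equation $(\ref{eq: {D_{TM}}^2=0 iff})$ for the $\G_2$-Laplacian $\bar{\Delta}_{S\otimes T}$, which --- unlike $\overline{D_{TM}}$ --- \emph{preserves} the decomposition $(\ref{eq: decomp of S otimes TM})$. Projecting $(\ref{eq: {D_{TM}}^2=0 iff})$ onto $\wedge^1M$ therefore leaves $\bar{\Delta}V$ on the left, with the right-hand side purely first-order and algebraic; those terms are eliminable using only the $S_{3/2}$-relation $A_{e_i}{\alpha_1}^{(i)}=-V$ and the single first-order identity $(\ref{eq: Rarita-Schwinger field condition important proj to wedge^0})$, giving $\bar{\Delta}V=-\frac{25}{12}V+\frac{2}{3}A_{e_j}\bar{\nabla}_{e_j}V$ and hence $\Delta V=-\frac{3}{4}V$. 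The negative eigenvalue kills $V$ outright, with no Weitzenb\"ock/$L^2$-incompatibility analysis and no separate handling of Killing $1$-forms needed. So the fix is to replace your multi-projection of the first-order equation by the $\wedge^1M$-projection of the squared-Dirac/$\G_2$-Laplacian identity.
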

\begin{proof}
We take a local orthonormal frame $\{ e_i \}$ which is $\bar{\nabla}$-parallel at a point $x$. Using the equation $(\ref{eq: twisted dirac relation})$, we rewrite the equation $D_{TM}\left( \alpha^{(i)} \otimes e_i \right) = 0$ as
\begin{equation} \label{D_{TM}=0}
\overline{D_{TM}}\left(\alpha^{(i)} \otimes e_i \right) + \frac{1}{2}\phi \cdot \alpha^{(i)} \otimes e_i + \frac{1}{3} e_j \cdot \alpha^{(i)} \otimes A_{e_j}e_i = 0.
\end{equation}
By diverting the method of computing the equation $(\ref{D_{TM}=7/2'})$, we see that (at $x$) the equation $(\ref{D_{TM}=0})$ becomes
\begin{equation} \label{eq: Rarita-Schwinger field condition important}
\begin{split}
0 &= \kappa_0 \otimes \left( -\frac{7}{2}{\alpha_0}^{(i)}e_i + \frac{1}{3}A_{e_i}{\alpha_1}^{(i)} - \bar{\nabla}_{e_j}\big( g(e_j, {\alpha_1}^{(i)})e_i \big) \right) \\
&\quad + \bar{\nabla}_{e_j} \big(A_{e_j}{\alpha_1}^{(i)} \cdot \kappa_0 \otimes e_i \big) + e_j \cdot \kappa_0 \otimes \bar{\nabla}_{e_j}({\alpha_0}^{(i)}e_i) + \frac{1}{2}{\alpha_1}^{(i)} \cdot \sigma \otimes e_i \\
&\quad + \frac{1}{3}e_j \cdot \sigma \otimes {\alpha_0}^{(i)}A_{e_j}e_i + \frac{1}{3}A_{e_j}{\alpha_1}^{(i)} \cdot \sigma \otimes A_{e_j}e_i.
\end{split}
\end{equation}
We project the equation $(\ref{eq: Rarita-Schwinger field condition important})$ onto each space of the irreducible decomposition $(\ref{eq: decomp of S otimes TM})$. First, projecting the equation $(\ref{eq: Rarita-Schwinger field condition important})$ onto the bundle $\wedge^0M \otimes \wedge^1M \cong \wedge^1M$, we get
\begin{eqnarray}
0 &=& -\frac{7}{2}{\alpha_0}^{(i)}e_i + \frac{1}{3}A_{e_i}{\alpha_1}^{(i)} - \bar{\nabla}_{e_j}\big( g(e_j, {\alpha_1}^{(i)})e_i \big) \nonumber \\
&\overset{(\ref{eq: S_{3/2} condition omega^1})}{=}& - \frac{23}{6}{\alpha_0}^{(i)}e_i - \bar{\nabla}_{e_j}\big( g(e_j, {\alpha_1}^{(i)})e_i \big) \quad ({\rm at \;} x). \label{eq: Rarita-Schwinger field condition important proj to wedge^0}
\end{eqnarray}
We remark that
\begin{equation*}
\begin{split}
-\frac{1}{2}\delta({\alpha_1}^{(i)} \wedge e_i) - \frac{1}{2}\delta({\alpha_1}^{(i)} \odot e_i) &= \frac{1}{2}g(e_j,\bar{\nabla}_{e_j}{\alpha_1}^{(i)})e_i - \frac{1}{2}g(e_j,e_i)\bar{\nabla}_{e_j}{\alpha_1}^{(i)} \\
&\quad - \frac{1}{2}g(e_j,{\alpha_1}^{(i)})\bar{\nabla}_{e_j}e_i + \frac{1}{2}g(e_j,\bar{\nabla}_{e_j}e_i){\alpha_1}^{(i)} \\
&\quad + \frac{1}{2}g(e_j,\bar{\nabla}_{e_j}{\alpha_1}^{(i)})e_i + \frac{1}{2}g(e_j,e_i)\bar{\nabla}_{e_j}{\alpha_1}^{(i)} \\
&\quad + \frac{1}{2}g(e_j,{\alpha_1}^{(i)})\bar{\nabla}_{e_j}e_i + \frac{1}{2}g(e_j,\bar{\nabla}_{e_j}e_i){\alpha_1}^{(i)} \\
&= \bar{\nabla}_{e_j}\big( g(e_j, {\alpha_1}^{(i)})e_i \big) \quad ({\rm at \;} x).
\end{split}
\end{equation*}

Now, we consider another equation ${D_{TM}}^2 \left( \alpha^{(i)} \otimes e_i \right)=0$. According to the equation $(\ref{eq: relation between twisted dirac and G2-Laplace})$ and $(\ref{eq: square of twisted dirac difference})$, ${D_{TM}}^2 \left( \alpha^{(i)} \otimes e_i \right)=0$ is equivalent to 
\begin{equation} \label{eq: {D_{TM}}^2=0 iff}
\begin{split}
\bar{\Delta}_{S \otimes T}(\alpha^{(i)} \otimes e_i) &= \frac{37}{36}\alpha^{(i)} \otimes e_i - \frac{5}{6} \psi \cdot \alpha^{(i)} \otimes e_i + \frac{5}{9}(e_i \mathbin{\lrcorner} e_j \mathbin{\lrcorner} \psi) \cdot \alpha^{(i)} \otimes e_j \\
&\quad - \frac{2}{9}e_je_i \cdot \alpha^{(i)} \otimes e_j + \frac{2}{3} (\Id \otimes A_{e_j})\bar{\nabla}_{e_j}(\alpha^{(i)} \otimes e_i) \\
&\quad + \frac{1}{3}(e_j \mathbin{\lrcorner} \phi \cdot \otimes \Id)\bar{\nabla}_{e_j}(\alpha^{(i)} \otimes e_i).
\end{split}
\end{equation}
Using the decomposition $(\ref{alpha decomp})$ and the formula $(\ref{eq: clifford action})$, we compute each term in the right-hand side of the equation $(\ref{eq: {D_{TM}}^2=0 iff})$.
\begin{align*}
\frac{37}{36}\alpha^{(i)} \otimes e_i &= \frac{37}{36}\kappa_0 \otimes {\alpha_0}^{(i)} e_i + \frac{37}{36}{\alpha_1}^{(i)} \cdot \kappa_0 \otimes e_i, \\
-\frac{5}{6}\psi \cdot \alpha^{(i)} \otimes e_i &= -\frac{35}{6}\kappa_0 \otimes {\alpha_0}^{(i)}e_i + \frac{5}{6}{\alpha_1}^{(i)} \cdot \kappa_0 \otimes e_i, \\
\frac{5}{9}(e_i \mathbin{\lrcorner} e_j \mathbin{\lrcorner} \psi) \cdot \alpha^{(i)} \otimes e_j &= \frac{10}{9}\kappa_0 \otimes A_{e_i}{\alpha_1}^{(i)} - \frac{10}{9}A_{e_j}{\alpha_0}^{(i)}e_i \cdot \kappa_0 \otimes e_j \\
&\quad - \frac{10}{9}A_{{\alpha_1}^{(i)}}A_{e_j}e_i \cdot \kappa_0 \otimes e_j - \frac{5}{9}\chi(e_j,e_i,{\alpha_1}^{(i)}) \cdot \kappa_0 \otimes e_j, \\
-\frac{2}{9}e_je_i \cdot \alpha^{(i)} \otimes e_j &= \frac{2}{9}\kappa_0 \otimes {\alpha_0}^{(i)}e_i + \frac{2}{9}\kappa_0 \otimes A_{e_i}{\alpha_1}^{(i)} \\
&\quad - \frac{2}{9}A_{e_j}{\alpha_0}^{(i)}e_i \cdot \kappa_0 \otimes e_j + \frac{2}{9}g(e_i, {\alpha_1}^{(i)})e_j \cdot \kappa_0 \otimes e_j \\
&\quad - \frac{2}{9}A_{e_j}A_{e_i}{\alpha_1}^{(i)} \cdot \kappa_0 \otimes e_j, 
\\
\frac{2}{3} (\Id \otimes A_{e_j})\bar{\nabla}_{e_j}(\alpha^{(i)} \otimes e_i) &= \frac{2}{3}\kappa_0 \otimes A_{e_j} \bar{\nabla}_{e_j}({\alpha_0}^{(i)}e_i) + \frac{2}{3}(\Id \otimes A_{e_j})\bar{\nabla}_{e_j}({\alpha_1}^{(i)}\cdot \kappa_0 \otimes e_i), 
\end{align*}
\begin{align*}
\frac{1}{3}(e_j \mathbin{\lrcorner} \phi \cdot \otimes \Id)\bar{\nabla}_{e_j}(\alpha^{(i)} \otimes e_i) &= -\kappa_0 \otimes \bar{\nabla}_{e_j}(g(e_j,{\alpha_1}^{(i)})e_i) + g(\bar{\nabla}_{e_j}e_j, {\alpha_1}^{(i)})\kappa_0 \otimes e_i \\
&\quad - \frac{1}{3}\bar{\nabla}_{e_j}(A_{e_j}{\alpha_1}^{(i)} \cdot \kappa_0 \otimes e_i) + \frac{1}{3}A_{\bar{\nabla}_{e_j}e_j}{\alpha_1}^{(i)} \cdot \kappa_0 \otimes e_i \\
&\quad + e_j \cdot \kappa_0 \otimes \bar{\nabla}_{e_j}({\alpha_0}^{(i)}e_i).
\end{align*}
Taking into account the results of these calculations, we project the equation $(\ref{eq: {D_{TM}}^2=0 iff})$ onto the bundle $\wedge^0M \otimes \wedge^1M \cong \wedge^1M$. We then obtain
\begin{eqnarray*}
\bar{\Delta}({\alpha_0}^{(i)}e_i) &=& -\frac{55}{12}{\alpha_0}^{(i)}e_i + \frac{4}{3}A_{e_i}{\alpha_1}^{(i)} - \bar{\nabla}_{e_j}(g(e_j,{\alpha_1}^{(i)})e_i) + \frac{2}{3}A_{e_j}\bar{\nabla}_{e_j}({\alpha_0}^{(i)}e_i) \\
&=& - \frac{25}{12}{\alpha_0}^{(i)}e_i + \frac{2}{3}A_{e_j}\bar{\nabla}_{e_j}({\alpha_0}^{(i)}e_i).
\end{eqnarray*}
Since $\bar{\Delta} - \Delta = \frac{2}{3}A_{e_j}\bar{\nabla}_{e_j} - \frac{4}{3}$ holds over $\wedge^1M$, the above equation becomes
\[
\Delta({\alpha_0}^{(i)}e_i) = - \frac{3}{4}{\alpha_0}^{(i)}e_i.
\]
As you know, the Laplace operator $\Delta$ on a tangent bundle is a non-negative operator, so we obtain ${\alpha_0}^{(i)}e_i=0$.
\end{proof}

\let\temp\thetheorem
\renewcommand{\thetheorem}{\ref{thmB}}

\begin{theorem}
Let $(M,\phi,g)$ be a compact nearly parallel $\G_2$-manifold with scalar curvature $\scal=42$. Then the space of the Rarita-Schwinger fields is isomorphic to
\[
R_{\gamma} \coloneqq \left\{ \gamma \in \Omega^3_{27}M \; \middle| \; \ast d\gamma = -\frac{1}{2}\gamma \right\}.
\]
Clearly, this space $R_{\gamma}$ is contained in the $1/4$-eigenspace $\left\{ \gamma \in \Omega^3_{27}M \; \middle| \; \Delta\gamma = \frac{1}{4}\gamma \right\}$ of the Laplace operator.
\end{theorem}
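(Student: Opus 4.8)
The proof follows the scheme of Theorem~\ref{thmA}: we run the twisted-Dirac calculus on a Rarita--Schwinger field $\varphi$, keeping track of how replacing the eigenvalue $nc=\tfrac72$ of condition (iii) by $0$ changes the outcome, the new feature being an extra $\wedge^2_{14}M$-component that has to be shown to vanish. First I would reduce $\varphi$ to a symmetric tensor. Write $\varphi=\alpha^{(i)}\otimes e_i$ with $\alpha^{(i)}=\big({\alpha_0}^{(i)}+{\alpha_1}^{(i)}\big)\cdot\kappa_0$ as in $(\ref{alpha decomp})$. By Lemma~\ref{lemma: alpha_0}, ${\alpha_0}^{(i)}e_i=0$, so $(\ref{eq: S_{3/2} condition omega^1})$ gives $A_{e_i}{\alpha_1}^{(i)}=0$ and $(\ref{eq: S_{3/2} condition omega^0})$ gives $\tr\big({\alpha_1}^{(i)}\otimes e_i\big)=0$. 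Setting $T:=\sum_i{\alpha_1}^{(i)}\otimes e_i\in\Gamma(TM\otimes TM)$, the defining relation $g(X,A(Y,Z))=\phi(X,Y,Z)$ together with $\wedge^2_7M\perp\wedge^2_{14}M$ shows that $\sum_iA_{e_i}{\alpha_1}^{(i)}$ reads off precisely the $\wedge^2_7M$-component of $T$; hence $A_{e_i}{\alpha_1}^{(i)}=0$ and $\tr T=0$ force $T=H+w$ with $H\in\Gamma(\Sym_0M)$ and $w\in\Gamma(\wedge^2_{14}M)$. So, along $(\ref{eq: decomp of S otimes TM})$, $\varphi$ has components only in $\Sym_0M$ (equal to $H$) and $\wedge^2_{14}M$ (equal to $w$), and I put $\gamma:=\mathbf{i}(H)\in\Omega^3_{27}M$.

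Next I would extract the equations. Rewriting $D_{TM}\varphi=0$ via $(\ref{eq: twisted dirac relation})$ exactly as in the passage leading to $(\ref{eq: Rarita-Schwinger field condition important})$, and projecting onto the $\Sym_0M$, $\wedge^2_{14}M$, $\wedge^2_7M$ and $\wedge^1M$ summands of $(\ref{eq: decomp of S otimes TM})$ --- evaluating the algebraic terms with $(\ref{eq: clifford action})$, $(\ref{eq1})$, $(\ref{eq2})$, $(\ref{eq9})$, $(\ref{eq12})$ and observing, as in Theorem~\ref{thmA}, that the skew pieces $\ast(\phi\wedge{\alpha_1}^{(i)}\wedge e_i)$ cancel in symmetric position --- I would then substitute Propositions~\ref{prop: cross product action1} and~\ref{prop: cross product action2} to replace $A_{e_j\star}\bar\nabla_{e_j}H$, $\widetilde{A_{e_j\star}}\bar\nabla_{e_j}H$, $A_{e_j\star}\bar\nabla_{e_j}w$, $\widetilde{A_{e_j\star}}\bar\nabla_{e_j}w$ by expressions in $d\gamma$, $\delta\gamma$, $\delta H$, $dw$, $\delta w$. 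This yields a coupled first-order system of the same shape as $(\ref{system a})$: a ``divergence'' equation $-\tfrac13\delta H\mathbin{\lrcorner}\phi+\tfrac12(\delta\gamma)_{\wedge^2_{14}}+(\text{linear in }\delta w)=0$ which, combined with the $\wedge^1M$- and $\wedge^2_7M$-projections and the identity $(\delta\gamma)_{\wedge^2_7}=\tfrac43\delta H\mathbin{\lrcorner}\phi$ used as in Theorem~\ref{thmA}, forces $\delta H=0$, hence $\delta\gamma=0$ and $\delta w=0$; and a ``curl'' equation relating $(\ast d\gamma)_{\wedge^3_{27}}$, $dw$ and $\gamma$.

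It remains to kill $w$ and read off the eigenvalue. Mimicking Lemma~\ref{lemma: alpha_0}, I would project the second-order identity $(\ref{eq: {D_{TM}}^2=0 iff})$ onto the $\bar\nabla$-parallel summand $\wedge^2_{14}M$; since $\bar\Delta_{S\otimes T}$ preserves this summand and the first-order $w$-terms drop out by the Schur relations $(\ref{Schur w})$, $(\ref{Schur w'})$ (using $\delta H=\delta w=0$ from the previous step), this collapses to a Laplace-type equation $\Delta w=c\,w$ with a negative constant $c$ (as with the value $-\tfrac34$ of Lemma~\ref{lemma: alpha_0}), whence $w=0$ by non-negativity of $\Delta$ on the compact $M$. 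With $w=0$, the $\Sym_0M$-projection of $D_{TM}\varphi=0$ is the exact analogue of $(\ref{D_{TM}=7/2, Sym_0''})$ with $\tfrac72$ replaced by $0$, and Proposition~\ref{prop: cross product action1} turns it into $\ast d\gamma=-\tfrac12\gamma$; such a $\gamma$ is automatically co-closed. Conversely, given $\gamma\in R_\gamma$, I would set $H:=\mathbf{i}^{-1}(\gamma)$, ${\alpha_0}^{(i)}:=0$, ${\alpha_1}^{(i)}:=H(e_i)$: then $\varphi:={\alpha_1}^{(i)}\cdot\kappa_0\otimes e_i$ lies in $S_{3/2}$ (the conditions $(\ref{eq: S_{3/2} condition omega^0})$, $(\ref{eq: S_{3/2} condition omega^1})$ hold since $H$ is trace-free symmetric) and satisfies $D_{TM}\varphi=0$ by reversing the computation, so $\gamma\mapsto\varphi$ is the inverse map, giving the isomorphism. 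Finally $\delta\gamma=0$ and $\ast d\gamma=-\tfrac12\gamma$ yield $d\gamma=-\tfrac12\ast\gamma$, hence $\Delta\gamma=\delta d\gamma=\tfrac14\gamma$.

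The main obstacle is the $w$-elimination together with the bookkeeping around it. Unlike in Theorem~\ref{thmA}, where $\varphi$ is built from a symmetric $H$ with $\delta H=0$ by hypothesis, a general section of $S_{3/2}$ carries the component $w$, which couples to $H$ in the $\Sym_0M$- and $\wedge^2_{14}M$-projections; so Theorem~\ref{thmA} cannot simply be invoked, and $w=0$ has to be forced by the Bochner-type argument above. Establishing that the resulting constant $c$ is negative, and propagating every numerical coefficient --- in particular the final value $-\tfrac12$ --- correctly through $(\ref{eq: twisted dirac relation})$, $(\ref{eq: {D_{TM}}^2=0 iff})$ and Propositions~\ref{prop: cross product action1}, \ref{prop: cross product action2}, is where the real work lies.
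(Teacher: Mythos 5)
Your overall reduction (use Lemma \ref{lemma: alpha_0} to set ${\alpha_0}^{(i)}e_i=0$, split $T={\alpha_1}^{(i)}\otimes e_i$ as $H+w$ with $H\in\Sym_0M$, $w\in\wedge^2_{14}M$, set $\gamma=\mathbf{i}(H)$, and project $D_{TM}\varphi=0$ onto the $\G_2$-summands) matches the paper exactly up to the projection step. The gap is in how you eliminate $w$, and there are two concrete problems.

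First, the implication ``$\delta H=0$, hence $\delta\gamma=0$'' is false. The identity $(\delta\gamma)_{\wedge^2_7}=\tfrac43\,\delta H\mathbin{\lrcorner}\phi$ only says that $\delta H=0$ is equivalent to $(\delta\gamma)_{\wedge^2_7}=0$; the $\wedge^2_{14}$-component of $\delta\gamma$ is unconstrained by $\delta H$. In the proof of Theorem \ref{thmA}, $\delta\gamma=0$ comes out because $\varphi$ is built from a \emph{symmetric} $H$ with no $w$-term, so the entire antisymmetric part of the $\wedge^1M\otimes\wedge^1M$-projection vanishes, giving both $(\delta\gamma)_{\wedge^2_7}=0$ and $(\delta\gamma)_{\wedge^2_{14}}=0$. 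For a general Rarita–Schwinger field the $w$-term is present and the $\wedge^2_{14}$-part of that projection is nontrivial: the paper's computation (equations \eqref{eq: proj to wedge^1 otimes wedge^1 4}--\eqref{space isomorphic to RS-fields}) shows it produces the relation $w=-\tfrac16\,\delta\gamma$, not $(\delta\gamma)_{\wedge^2_{14}}=0$. So $w$ is \emph{not} killed at this stage.

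Second, the proposed Bochner argument on $w$ is not established and is unlikely to close up as described. When you project \eqref{eq: {D_{TM}}^2=0 iff} onto $\wedge^2_{14}M$, the first-order terms $\tfrac23(\Id\otimes A_{e_j})\bar\nabla_{e_j}$ and $\tfrac13(e_j\mathbin{\lrcorner}\phi\cdot\otimes\Id)\bar\nabla_{e_j}$ do not vanish by the Schur identities \eqref{Schur w}, \eqref{Schur w'} — those are algebraic statements about $A_{e_i\star}w$, not about $\bar\nabla_{e_j}w$ or $\bar\nabla_{e_j}H$ — and they contribute terms in $\delta\gamma$ (for instance, the antisymmetric part of $\tfrac12(A_{e_j\star}-\widetilde{A_{e_j\star}})\bar\nabla_{e_j}T$ contains $(\delta\gamma)_{\wedge^2_{14}}$ via Proposition \ref{prop: cross product action2}). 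Hence the projected equation is not a clean $\Delta w=cw$; it couples $w$ to first derivatives of $H$ precisely because $w=-\tfrac16\delta\gamma$. The paper instead keeps $w=-\tfrac16\delta\gamma$, reduces the Rarita–Schwinger condition to the system $4d\delta\gamma+6\ast d\gamma+3\gamma=0$, $(\delta\gamma)_{\wedge^2_7}=0$ on $\Omega^3_{27}M$, embeds it in $\ker(4\Delta+8\ast d+3)$, decomposes by $\ast d$-eigenvalues (obtaining $\lambda\in\{-\tfrac12,-\tfrac32\}$ for $\lambda\neq0$ and $d\delta\gamma=-\tfrac34\gamma$ for $\lambda=0$), discards $\lambda=-\tfrac32$ by direct substitution, and kills the $\lambda=0$ piece by non-negativity of $\Delta$ on $\Omega^3M$. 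That positivity step is what makes $\gamma$ co-closed and, a posteriori, forces $w=0$; it is not obtainable by a Bochner estimate on $w$ alone, and your proof needs to supply it or an equivalent.
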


\let\thetheorem\temp
\addtocounter{theorem}{-1}

\begin{proof}
Before beginning the proof, we provide two symbols for a 2-tensor field ${\alpha_1}^{(i)} \otimes e_i$:
\begin{eqnarray*}
w &\coloneqq& \pr_{\wedge^2M}({\alpha_1}^{(i)} \otimes e_i) = \frac{1}{2} {\alpha_1}^{(i)} \wedge e_i = \frac{1}{2}({\alpha_1}^{(i)} \otimes e_i - e_i \otimes {\alpha_1}^{(i)}), \\
H &\coloneqq& \pr_{\Sym M}({\alpha_1}^{(i)} \otimes e_i) = \frac{1}{2} {\alpha_1}^{(i)} \odot e_i = \frac{1}{2} ({\alpha_1}^{(i)} \otimes e_i + e_i \otimes {\alpha_1}^{(i)}).
\end{eqnarray*}
Let $\alpha^{(i)} \otimes e_i$ be a local expression of a Rarita-Schwinger field. Applying Lemma \ref{lemma: alpha_0} to the equation $(\ref{eq: S_{3/2} condition omega^1})$ yields $A_{e_i}{\alpha_1}^{(i)}=0$. This implies that ${\alpha_1}^{(i)} \wedge e_i$ is in $\Omega^2_{14}M$. Furthermore, applying Lemma \ref{lemma: alpha_0} to the equation $(\ref{eq: Rarita-Schwinger field condition important proj to wedge^0})$, we get $\bar{\nabla}_{e_j}\big( g(e_j, {\alpha_1}^{(i)})e_i \big)=0$, which means $\delta w + \delta H=0$.

Now, applying Lemma \ref{lemma: alpha_0} to the key equation $(\ref{eq: Rarita-Schwinger field condition important})$ and projecting it onto the bundle $\wedge^1M \otimes \wedge^1M$, we obtain
\begin{equation} \label{eq: proj to wedge^1 otimes wedge^1 1}
(A_{e_j} \otimes \Id) \bar{\nabla}_{e_j}({\alpha_1}^{(i)} \otimes e_i) = -\frac{1}{2}{\alpha_1}^{(i)} \otimes e_i - \frac{1}{3}A_{e_j}{\alpha_1}^{(i)} \otimes A_{e_j}e_i.
\end{equation}
As we obtained when calculating the equation $(\ref{D_{TM}=7/2, Sym_0})$, $A_{e_j}{\alpha_1}^{(i)} \otimes A_{e_j}e_i = -e_i \otimes {\alpha_1}^{(i)} + \ast(\phi \wedge {\alpha_1}^{(i)} \wedge e_i)$ holds. From the characterization of the bundle $\wedge^2_{14}M$, it follows that $\ast(\phi \wedge {\alpha_1}^{(i)} \wedge e_i) = {\alpha_1}^{(i)} \wedge e_i$. Therefore, the equation $(\ref{eq: proj to wedge^1 otimes wedge^1 1})$ is rewritten as
\begin{equation} \label{eq: proj to wedge^1 otimes wedge^1 3}
\frac{1}{2}A_{e_j \star}\bar{\nabla}_{e_j}({\alpha_1}^{(i)} \otimes e_i) + \frac{1}{2}\widetilde{A_{e_j \star}}\bar{\nabla}_{e_j}({\alpha_1}^{(i)} \otimes e_i) = -\frac{3}{2}w - \frac{1}{6}H.
\end{equation}
Employing the result of Proposition \ref{prop: cross product action1} and Proposition \ref{prop: cross product action2} to the equation $(\ref{eq: proj to wedge^1 otimes wedge^1 3})$, we get
\begin{equation}
\begin{split} \label{eq: proj to wedge^1 otimes wedge^1 4}
\delta w \mathbin{\lrcorner} \phi - \frac{1}{3}\delta H \mathbin{\lrcorner} \phi + \frac{1}{2}(\delta \gamma)_{\wedge^2_{14}} + \widetilde{A_{e_j \star}}\bar{\nabla}_{e_j}w + A_{e_j \star}\bar{\nabla}_{e_j}H = -3w -\frac{1}{3}H, 
\end{split}
\end{equation}
where $\gamma$ is a section of $\wedge^3_{27}M$ defined by $\gamma = \mathbf{i}(H)$. In the same way that we show Theorem \ref{thmA}, the equation $(\ref{eq: proj to wedge^1 otimes wedge^1 4})$ is equivalent to the system
\begin{equation*}
\gamma \in \Omega^3_{27}M, \quad 4d\delta\gamma + 6\ast d\gamma + 3\gamma = 0, \quad (\delta\gamma)_{\wedge^2_7}=0, \quad w=-\frac{1}{6}\delta\gamma.
\end{equation*}
From the above calculations, the space of Rarita-Schwinger fields is isomorphic to 
\begin{equation} \label{space isomorphic to RS-fields}
\left\{ \gamma \in \Omega^3_{27}M \; \middle| \; 4d\delta\gamma + 6\ast d\gamma + 3\gamma = 0, (\delta\gamma)_{\wedge^2_7}=0 \right\}.
\end{equation}
This space is included in 
\begin{equation} \label{space includes RS-fields}
\left\{ \gamma \in \Omega^3_{27}M \; \middle| \; 4\Delta\gamma + 8\ast d\gamma + 3\gamma = 0, (\delta\gamma)_{\wedge^2_7}=0 \right\}.
\end{equation}
We first examine the large space $(\ref{space includes RS-fields})$. The method we describe hereafter is the same as in \cite[Theorem 6.2]{AlexandrovSemmelmann}. Since the space $(\ref{space includes RS-fields})$ is a subspace of the kernel of a second-order elliptic differential operator $4\Delta + 8\ast d + 3\Id$ on $\Omega^3M$, its dimension is finite. The operator $\ast d$ is symmetric in the space $\ker (4\Delta + 8\ast d + 3\Id) \subset \Omega^3M$ and preserves the condition $(\delta\gamma)_{\wedge^2_7}=0$. As mentioned in Section \ref{sec: infinitesimal deformation of Killing spinor}, we note that from $(\delta\gamma)_{\wedge^2_7}=0$, the operator $\ast d$ satisfies $\ast d\gamma = (\ast d\gamma)_{\wedge^3_{27}}$. Thus, we can decompose the space $(\ref{space includes RS-fields})$ into the eigenspaces of the operator $\ast d$. If we assume $\ast d \gamma = \lambda \gamma \; (\lambda \neq 0)$, then $\gamma$ is co-closed and the quadratic equation $4\lambda^2 + 8\lambda +3=0$ is derived, whose solutions are $\lambda = -\frac{1}{2},-\frac{3}{2}$. In the case $\lambda = 0$, we get that $\gamma$ is closed and $d\delta\gamma = -\frac{3}{4}\gamma$. In summary, we have that the space $(\ref{space includes RS-fields})$ is isomorphic to the direct sum of the spaces
\begin{equation} \label{RS-field direct sum}
\left\{ \gamma \in \Omega^3_{27}M \middle| \ast d\gamma = -\frac{3}{2}\gamma \right\}, \; \left\{ \gamma \in \Omega^3_{27}M \middle| \ast d\gamma = -\frac{1}{2}\gamma \right\}, \; \left\{ \gamma \in \Omega^3_{27}M \middle| d\delta\gamma = -\frac{3}{4}\gamma \right\}.
\end{equation}
Also, the two spaces behind are included in the space $(\ref{space isomorphic to RS-fields})$, but the first space is not. Furthermore, for the third space of $(\ref{RS-field direct sum})$, there is an inclusion relation
\[
\left\{ \gamma \in \Omega^3_{27}M \middle| d\delta\gamma = -\frac{3}{4}\gamma \right\} \subset \left\{ \gamma \in \Omega^3_{27}M \middle| \Delta \gamma = -\frac{3}{4}\gamma \right\}.
\]
Since the Laplace operator $\Delta$ is a non-negative operator, we know the third space of $(\ref{RS-field direct sum})$ is zero. Thus, we obtain Theorem \ref{thmB}.
\end{proof}

\begin{corollary}
The space of the Rarita-Schwinger fields is contained in the $-1/12$-eigenspace $\left\{ \gamma \in \Omega^3_{27}M \; \middle| \; \bar{\Delta}\gamma = -\frac{1}{12}\gamma \right\}$ of the $\G_2$-Laplace operator.
\end{corollary}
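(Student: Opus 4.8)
The plan is to prove the sharper fact that a Rarita--Schwinger field, viewed as a section of $S_{1/2}\otimes TM^{\C}$, is an eigensection of $\bar{\Delta}_{S\otimes T}$ with eigenvalue $-\tfrac1{12}$, and then to transport this to $\Omega^3_{27}M$. Recall from the proof of Theorem~\ref{thmB} that a Rarita--Schwinger field $\varphi$ can be written locally as $\varphi={\alpha_1}^{(i)}\cdot\kappa_0\otimes e_i$ — its $\wedge^0M$-part ${\alpha_0}^{(i)}e_i$ vanishing by Lemma~\ref{lemma: alpha_0} — with $A_{e_i}{\alpha_1}^{(i)}=0$, $\;{\alpha_1}^{(i)}\wedge e_i=0$ (the latter because the associated $\gamma$ is co-closed), and $\delta({\alpha_1}^{(i)}\otimes e_i)=0$; thus $H\coloneqq\tfrac12\,{\alpha_1}^{(i)}\odot e_i$ is trace-free symmetric, equals ${\alpha_1}^{(i)}\otimes e_i$, and $\gamma\coloneqq\mathbf{i}(H)\in\Omega^3_{27}M$ satisfies $\ast d\gamma=-\tfrac12\gamma$. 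In particular $\varphi$ is exactly the image of $H$ under the $\bar{\nabla}$-parallel inclusion of the $\Sym_0M$-summand of the decomposition $(\ref{eq: decomp of S otimes TM})$. Since $\bar{\nabla}$ preserves $(\ref{eq: decomp of S otimes TM})$ and $q_{S\otimes T}(\bar{R})$ preserves $\G_2$-types, $\bar{\Delta}_{S\otimes T}$ preserves this summand; and as $\mathbf{i}$ is $\bar{\nabla}$-parallel, the identity $\bar{\Delta}_{S\otimes T}\varphi=-\tfrac1{12}\varphi$ will at once give $\bar{\Delta}\gamma=-\tfrac1{12}\gamma$.

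First I would compute $\overline{D_{TM}}\varphi$. From $(\ref{eq: twisted dirac relation})$ and $D_{TM}\varphi=0$,
\[
\overline{D_{TM}}\varphi=-\tfrac12\,(\phi\cdot\otimes\Id)\,\varphi-\tfrac13\,(e_j\cdot\otimes A_{e_j})\,\varphi .
\]
Now $\phi\cdot(\beta\cdot\kappa_0)=\beta\cdot\kappa_0$ for $\beta\in\Omega^1M$, so the first term is $-\tfrac12\varphi$. For the second, $(\ref{eq: clifford action})$ gives $e_j\cdot{\alpha_1}^{(i)}\cdot\kappa_0=-g(e_j,{\alpha_1}^{(i)})\kappa_0+A_{e_j}{\alpha_1}^{(i)}\cdot\kappa_0$; the $\kappa_0$-part contracts to $A_{{\alpha_1}^{(i)}}e_i=-A_{e_i}{\alpha_1}^{(i)}=0$, while the remainder is $A_{e_j}{\alpha_1}^{(i)}\cdot\kappa_0\otimes A_{e_j}e_i$, corresponding in $\Omega^1M\otimes\Omega^1M$ to $A_{e_j}{\alpha_1}^{(i)}\otimes A_{e_j}e_i=-H$ — the latter identity being exactly the one extracted from $(\ref{eq1})$ and $(\ref{eq12})$ in the proof of Theorem~\ref{thmA}, now using ${\alpha_1}^{(i)}\wedge e_i=0$ and $\tr H=0$. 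Hence $(e_j\cdot\otimes A_{e_j})\varphi=-\varphi$, so $\overline{D_{TM}}\varphi=-\tfrac12\varphi+\tfrac13\varphi=-\tfrac16\varphi$, and therefore $\overline{D_{TM}}^2\varphi=\tfrac1{36}\varphi$.

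I would then substitute into $(\ref{eq: relation between twisted dirac and G2-Laplace})$. The only term there not manifestly proportional to $\varphi$ is the first-order term $\tfrac23\big((e_j\mathbin{\lrcorner}\phi)\cdot\otimes\Id\big)\bar{\nabla}_{e_j}\varphi$, and handling it is the main point: a priori it need not be a multiple of $\varphi$, and what collapses it is the Clifford relation $\phi\cdot e_j+e_j\cdot\phi=-2\,(e_j\mathbin{\lrcorner}\phi)$ together with $\bar{\nabla}\phi=0$, which give on $\Gamma(S_{1/2}\otimes TM^{\C})$ the operator identity
\[
\sum_j\big((e_j\mathbin{\lrcorner}\phi)\cdot\otimes\Id\big)\bar{\nabla}_{e_j}=-\tfrac12\Big((\phi\cdot\otimes\Id)\,\overline{D_{TM}}+\overline{D_{TM}}\,(\phi\cdot\otimes\Id)\Big).
\]
Evaluated on $\varphi$, using $(\phi\cdot\otimes\Id)\varphi=\varphi$ and $\overline{D_{TM}}\varphi=-\tfrac16\varphi$, this equals $\tfrac16\varphi$. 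Since also $(\psi\cdot\otimes\Id)\varphi=-\varphi$ (because $\psi\cdot(\beta\cdot\kappa_0)=-\beta\cdot\kappa_0$), formula $(\ref{eq: relation between twisted dirac and G2-Laplace})$ yields
\[
\bar{\Delta}_{S\otimes T}\varphi=\overline{D_{TM}}^2\varphi+\tfrac23\varphi+\tfrac23(\psi\cdot\otimes\Id)\varphi-\tfrac23\big((e_j\mathbin{\lrcorner}\phi)\cdot\otimes\Id\big)\bar{\nabla}_{e_j}\varphi=\tfrac1{36}\varphi+\tfrac23\varphi-\tfrac23\varphi-\tfrac19\varphi=-\tfrac1{12}\varphi ,
\]
whence $\bar{\Delta}\gamma=-\tfrac1{12}\gamma$ by the transfer noted above — consistently with the $\tfrac14$-eigenvalue of $\Delta$ recorded in Theorem~\ref{thmB}. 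All algebraic inputs used here ($\phi\cdot(\beta\cdot\kappa_0)=\beta\cdot\kappa_0$, $\psi\cdot(\beta\cdot\kappa_0)=-\beta\cdot\kappa_0$, $A_{e_j}{\alpha_1}^{(i)}\otimes A_{e_j}e_i=-H$, and the identification of $\varphi$ with the image of $H$) are already at hand from the proofs of Theorems~\ref{thmA} and \ref{thmB}, so the only genuinely new step is the operator identity displayed above.
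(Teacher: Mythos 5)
Your proposal is correct but takes a genuinely different route from the paper. The paper's proof is very short: it cites the formula $\bar{\Delta}\gamma = \Delta\gamma - 2\ast(d\gamma)_{\wedge^4_7} + \tfrac{2}{3}\ast(d\gamma)_{\wedge^4_{27}}$ from Alexandrov--Semmelmann, observes that $\ast d\gamma = -\tfrac12\gamma$ forces $d\gamma\in\Omega^4_{27}M$, and plugs in $\Delta\gamma=\tfrac14\gamma$ from Theorem B to obtain $\bar{\Delta}\gamma=\tfrac14\gamma-\tfrac13\gamma=-\tfrac1{12}\gamma$. You instead stay on the spinor side: you compute the Dirac eigenvalue $\overline{D_{TM}}\varphi=-\tfrac16\varphi$ (a pleasant intermediate fact the paper does not record), feed it into the Weitzenb\"ock-type formula $(\ref{eq: relation between twisted dirac and G2-Laplace})$ after collapsing the first-order term via the anticommutator identity $\phi\cdot e_j+e_j\cdot\phi=-2(e_j\mathbin{\lrcorner}\phi)$ and $\bar{\nabla}\phi=0$, and then transport the eigenvalue to $\Omega^3_{27}M$ through the parallel $\G_2$-equivariant identification of the $\Sym_0M$-summand of $S_{1/2}\otimes TM^{\C}$ with $\wedge^3_{27}M$. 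All your individual steps check out: $\phi\cdot(\beta\cdot\kappa_0)=\beta\cdot\kappa_0$, $\psi\cdot(\beta\cdot\kappa_0)=-\beta\cdot\kappa_0$, the identity $A_{e_j}{\alpha_1}^{(i)}\otimes A_{e_j}e_i=-H$ under $w=0$ and ${\alpha_1}^{(i)}\wedge e_i=0$, and the arithmetic $\tfrac1{36}+\tfrac23-\tfrac23-\tfrac19=-\tfrac1{12}$ all match. What your proof buys is self-containedness: it avoids the citation and re-derives the eigenvalue from formulas already established in Section~3, which is aesthetically closer to the method used for Lemma~\ref{lemma: alpha_0}. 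What the paper's proof buys is brevity.

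One small caveat: you present the transport step (from $\bar{\Delta}_{S\otimes T}\varphi=-\tfrac1{12}\varphi$ to $\bar{\Delta}\gamma=-\tfrac1{12}\gamma$) as immediate, and you call the operator identity the ``only genuinely new step,'' but the transport uses a non-trivial fact about $q(\bar{R})$: not merely that it preserves $\G_2$-types, but that on two isomorphic $\G_2$-irreducible summands sitting inside \emph{different} $\SO(7)$-bundles it acts by the same scalar, so that the parallel $\G_2$-equivariant identification of the $\Sym_0M$-summand with $\wedge^3_{27}M$ genuinely intertwines the two $\G_2$-Laplacians. This holds because $\bar{R}$ annihilates $\wedge^2_7M$, which lets one rewrite $q(\bar{R})=\sum_a(\omega_a)_\star\bar{R}(\omega_a)_\star$ over an orthonormal basis of $\wedge^2_{14}M\cong\mathfrak{g}_2$ alone, and both factors then lie in $\mathfrak{g}_2$ and commute with $\G_2$-intertwiners. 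Your write-up implicitly relies on this but should flag it, since it is the essential content making the transfer valid; the Weitzenb\"ock manipulation, by contrast, is routine once $\overline{D_{TM}}\varphi=-\tfrac16\varphi$ is in hand.
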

\begin{proof}
The difference between the standard Laplace operator $\Delta$ and the $\G_2$-Laplace operator $\bar{\Delta}$ on the space $\Omega^3_{27}M$ is known in \cite[Proposition 5.3]{AlexandrovSemmelmann}:
\[
\bar{\Delta}\gamma = \Delta\gamma - 2\ast(d\gamma)_{\wedge^4_7} + \frac{2}{3}\ast(d\gamma)_{\wedge^4_{27}} \quad {\rm for \: all} \; \gamma \in \Omega^3_{27}M.
\]
Using the fact that the space of the Rarita-Schwinger fields is isomorphic to the space $R_{\gamma} = \left\{ \gamma \in \Omega^3_{27}M \; \middle| \; \ast d\gamma = -\frac{1}{2}\gamma \right\}$, we get this corollary.
\end{proof}

\section{Examples and Applications} \label{sec: examples and applications}
\subsection{Examples}
As shown in Theorem \ref{thmB}, we revealed that the space of the Rarita-Schwinger fields is isomorphic to $\left\{ \gamma \in \Omega^3_{27}M \mid \ast d\gamma = -\frac{1}{2}\gamma \right\}$ on compact nearly parallel $\G_2$-manifolds. In this subsection, we investigate whether non-zero Rarita-Schwinger fields actually exist on specific manifolds.

All the compact simply connected homogeneous nearly parallel $\G_2$-manifolds are classified in \cite{FKMS}. These are six cases: $(S^7,g_{\rm round}) = \Spin(7) / \G_2$, $(S^7,g_{\rm squashed}) = \frac{\Sp(2) \times \Sp(1)}{\Sp(1) \times \Sp(1)}$, $\SO(5) / \SO(3)$, $M(3,2) = \frac{\SU(3) \times \SU(2)}{\U(1) \times \SU(2)}$, $Q(1,1,1) = \SU(2)^3 / \U(1)^2$, and Aloff-Wallach spaces $N(k,l) = \SU(3) / S^1_{k,l} \;(k,l \in \mathbb{Z})$, where the embedding of $S^1 = \U(1)$ in $\SU(3)$ is given by $S^1 \ni z \mapsto \diag(z^k,z^l,z^{-(k+l)}) \in \SU(3)$. The homogeneous structure on these spaces is summarized in \cite{Singhal}. It is important that all six of these homogeneous nearly parallel $\G_2$-manifolds are naturally reductive. Furthermore, the first four examples are normal (especially standard), while the latter two are not.

By tracing the argument in Section 7 of \cite{AlexandrovSemmelmann}, we obtain the proposition about eigenvalues of the $\G_2$-Laplace operator. Let $G/H$ be a 7-dimensional oriented naturally reductive homogeneous space with reductive decomposition $\mathfrak{g} = \mathfrak{h} \oplus \mathfrak{m}$ and be standard (up to a factor). In addition, we impose the same assumption as in \cite[Lemma 7.1]{AlexandrovSemmelmann} that the 3-form derived from the torsion of the canonical homogeneous connection becomes a nearly parallel $\G_2$-structure. Then the $-1/12$-eigenspace $\left\{ \gamma \in \Omega^3_{27}M \; \middle| \; \bar{\Delta}\gamma = -\frac{1}{12}\gamma \right\}$ of the $\G_2$-Laplace operator on $\Omega^3_{27}M$ is isomorphic to the direct sum of spaces $V_{\gamma} \otimes \Hom_H(V_{\gamma},\wedge^3_{27}\mathfrak{m}^{\ast})$, on which the Casimir operator $\Cas^G_{V_{\gamma}}$ acts as $\frac{1}{160}$. Since the Casimir operator is a non-positive operator, we get 

\begin{proposition}
There are no Rarita-Schwinger fields on the four normal homogeneous nearly parallel $\G_2$-manifolds above: $(S^7,g_{\rm round})$, $(S^7,g_{\rm squashed})$, $\SO(5) / \SO(3)$, and $M(3,2)$.
\end{proposition}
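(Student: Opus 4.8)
The plan is to combine Theorem \ref{thmB} and the corollary that follows it with the eigenvalue description of the $\G_2$-Laplace operator in the proposition immediately above, and then to use the non-positivity of the Casimir operator of a compact semisimple Lie group. In a sentence: a non-zero Rarita--Schwinger field would produce a $\gamma \in \Omega^3_{27}M$ with $\bar{\Delta}\gamma = -\tfrac{1}{12}\gamma$, hence an irreducible $G$-representation $V_{\gamma}$ on which $\Cas^G_{V_{\gamma}}$ acts as the scalar $\tfrac{1}{160} > 0$, which is impossible on a normal homogeneous space.

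First I would check that each of $(S^7,g_{\rm round})$, $(S^7,g_{\rm squashed})$, $\SO(5)/\SO(3)$ and $M(3,2)$ fulfils the hypotheses of the preceding proposition: all four are $7$-dimensional, oriented, naturally reductive and standard up to a constant factor --- this is exactly the content of the statement ``the first four examples are normal (especially standard)'' recalled above --- and for each the $3$-form built from the torsion of the canonical homogeneous connection is the nearly parallel $\G_2$-structure, which is the assumption imported from \cite[Lemma~7.1]{AlexandrovSemmelmann}. Next, by Theorem \ref{thmB} the space of Rarita--Schwinger fields is isomorphic to $R_{\gamma} = \left\{ \gamma \in \Omega^3_{27}M \mid \ast d\gamma = -\tfrac{1}{2}\gamma \right\}$, and by the corollary following it $R_{\gamma}$ is contained in the $-1/12$-eigenspace $\left\{ \gamma \in \Omega^3_{27}M \mid \bar{\Delta}\gamma = -\tfrac{1}{12}\gamma \right\}$ of the $\G_2$-Laplace operator. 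Applying the preceding proposition, this eigenspace is isomorphic to a direct sum of spaces $V_{\gamma} \otimes \Hom_H(V_{\gamma}, \wedge^3_{27}\mathfrak{m}^{\ast})$, the sum being taken over those irreducible $G$-representations $V_{\gamma}$ on which $\Cas^G_{V_{\gamma}}$ acts as $\tfrac{1}{160}$.

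The concluding step is a sign argument. Since the four manifolds are normal, the metric is a positive multiple of the one induced by a bi-invariant metric on the compact semisimple group $G$, so in the convention used here the Casimir operator is non-positive: it acts by a strictly negative scalar on every non-trivial irreducible summand and by $0$ on the trivial one. Hence no $V_{\gamma}$ can satisfy $\Cas^G_{V_{\gamma}} = \tfrac{1}{160} > 0$; every summand above is therefore zero, the $-1/12$-eigenspace of $\bar{\Delta}$ on $\Omega^3_{27}M$ vanishes, and consequently $R_{\gamma} = \{0\}$. Rescaling the metric by the ``up to a factor'' constant multiplies the Laplace eigenvalue and the Casimir value by the same positive number, so it does not disturb the comparison of signs.

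The hard part is not in this deduction but in the proposition it rests on --- the Peter--Weyl/Frobenius-reciprocity identification of the eigenspace together with the precise constant $\tfrac{1}{160}$, including the normalization bookkeeping matching the $\scal = 42$ convention for $\bar{\Delta}$ to the standard normalization of the Casimir --- and that is assumed here. Beyond that, the only points demanding care are (i) confirming that all four examples are genuinely standard and not merely naturally reductive, which is precisely why the argument does not extend verbatim to $Q(1,1,1)$ or to the Aloff--Wallach spaces $N(k,l)$, and (ii) tracking the metric scaling, which as noted is harmless; with both in hand the conclusion is immediate.
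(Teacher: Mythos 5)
Your proposal is correct and follows essentially the same route as the paper: invoke Theorem~\ref{thmB} together with its corollary to place $R_{\gamma}$ inside the $-1/12$-eigenspace of the $\G_2$-Laplace operator, identify that eigenspace on a standard (up to scale) naturally reductive $G/H$ with $\bigoplus V_{\gamma}\otimes\Hom_H(V_{\gamma},\wedge^3_{27}\mathfrak{m}^{\ast})$ where $\Cas^G_{V_{\gamma}}$ acts as the positive scalar $\tfrac{1}{160}$, and conclude from the non-positivity of the Casimir that the eigenspace, and hence $R_{\gamma}$, vanishes. Your side remarks --- that the sign comparison is invariant under the harmless rescaling, and that normality (not just natural reductivity) is what makes the Casimir-sign argument work and why it does not cover $Q(1,1,1)$ or the Aloff--Wallach spaces --- accurately reflect why the paper restricts the conclusion to those four manifolds.
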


On nearly K\"{a}hler 6-manifolds, which are positive Einstein manifolds with Killing spinors, the dimension of the Rarita-Schwinger fields coincides with the 3rd Betti number, and there are indeed examples where Rarita-Schwinger fields exist (cf. \cite{OhnoTomihisa}). Also, let $(M^7,g)$ be a compact $\G_2$-manifold, which is a Ricci-flat manifold. Then the dimension of the Rarita-Schwinger fields is equal to $b_2(M) + b_3(M) -1$. There are many manifolds with (non-trivial) Rarita-Schwinger fields, but no examples without Rarita-Schwinger fields have been found (cf. \cite{YasushiSemmelmann}). On the other hand, on nearly parallel $\G_2$-manifolds, which are positive Einstein manifolds with Killing spinors, we can not so far construct an example with Rarita-Schwinger fields. It is a future problem to investigate whether there exists a Rarita-Schwinger field on not normal homogeneous and inhomogeneous nearly parallel $\G_2$-manifolds (listed in \cite{FKMS}, for example).

\subsection{Linear instability}
In this subsection, as one application, we state the relationship between instability and Rarita-Schwinger fields. Various notions about stability theory are summarized in \cite{WangWang}. In this paper, we deal only with instability in the following sense. A closed Einstein manifold $(M,g)$ is linearly unstable if there exists a non-trivial symmetric tensor $H$ such that $\tr H =0, \delta H=0$ and
\begin{equation*}
\langle (\Delta - 2E)H,H \rangle < 0,
\end{equation*}
where $E$ is the Einstein constant. In particular, recall that $E=6$ on a nearly parallel $\G_2$-manifold with $\scal=42$.

For the sake of arguments, we introduce the following lemma. 
\begin{lemma}[{\cite[Proposition 6.1]{AlexandrovSemmelmann}}] \label{lem: Laplacian difference between Sym_0 and Omega^3_{27}}
Let $\gamma$ be a section of $\wedge^3_{27}M$. Then we have
\[
\mathbf{i}\Delta\mathbf{i}^{-1}(\gamma) = \Delta\gamma - 2\ast(d\gamma)_{\wedge^4_7} + 2\ast(d\gamma)_{\wedge^4_{27}} + 4\gamma.
\]
\end{lemma}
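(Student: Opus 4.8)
The plan is to route the computation through the canonical $\G_2$-connection $\bar\nabla$, using that the $\G_2$-Laplacian $\bar\Delta$ is ``transparent'' to the isomorphism $\mathbf{i}$. Indeed, $\mathbf{i}\colon \Sym_0 M \to \wedge^3_{27}M$, $\mathbf{i}(H) = -2H_\star\phi$, is induced fibrewise by a $\G_2$-equivariant isomorphism of the irreducible $27$-dimensional $\G_2$-representations $\Sym_0\mathbb{R}^7$ and $\wedge^3_{27}\mathbb{R}^7$; since $\bar\nabla$ is metric with $\bar\nabla\phi = 0$ (hence $\bar\nabla A = 0$), the map $\mathbf{i}$ is $\bar\nabla$-parallel and therefore commutes with the connection Laplacian $\bar\nabla^{\ast}\bar\nabla$; and since the curvature of $\bar\nabla$ takes values in $\wedge^2_{14}M \cong \mathfrak{g}_2$, the endomorphism $q(\bar R)$ is a combination of $\mathfrak{g}_2$-actions and so also commutes with the $\G_2$-equivariant map $\mathbf{i}$. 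Hence $\mathbf{i}\,\bar\Delta\,\mathbf{i}^{-1} = \bar\Delta$ on $\wedge^3_{27}M$, and the whole discrepancy in the lemma is forced to come from the first-order torsion correction $\Delta - \bar\Delta$.

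Writing $C_E \coloneqq \Delta - \bar\Delta$ on a $\G_2$-tensor bundle $EM$, I would use
\[
\mathbf{i}\,\Delta\,\mathbf{i}^{-1} = \mathbf{i}\,(\bar\Delta + C_{\Sym_0})\,\mathbf{i}^{-1} = \bar\Delta + \mathbf{i}\,C_{\Sym_0}\,\mathbf{i}^{-1} = \Delta - C_{\wedge^3_{27}} + \mathbf{i}\,C_{\Sym_0}\,\mathbf{i}^{-1},
\]
so everything reduces to identifying $C_{\wedge^3_{27}}$ and $\mathbf{i}\,C_{\Sym_0}\,\mathbf{i}^{-1}$. The first is already known: \cite[Proposition~5.3]{AlexandrovSemmelmann}, recalled in the proof of the corollary above, gives $C_{\wedge^3_{27}}\gamma = 2\ast(d\gamma)_{\wedge^4_7} - \tfrac23\ast(d\gamma)_{\wedge^4_{27}}$. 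For the second, I would substitute $\bar\nabla_X = \nabla_X - \tfrac13 A_{X\star}$ (recall $\tau_0 = 4$) into both Laplacians on symmetric $2$-tensors: the difference of connection Laplacians becomes $\tfrac23 A_{e_i\star}\bar\nabla_{e_i}$ up to the zeroth-order terms $\tfrac13(\nabla_{e_i}A)_{e_i\star}$ and $\tfrac19 A_{e_i\star}A_{e_i\star}$, while the difference of curvature endomorphisms $q(R) - q(\bar R)$ on $\Sym_0 M$ is purely algebraic, handled as in Section~\ref{sec: comparison of differential operators} via Lemma~\ref{lem: difference curvatures}. Each term then translates through $\gamma = \mathbf{i}(H)$: the first-order part is (a constant times) $\mathbf{i}(A_{e_i\star}\bar\nabla_{e_i}H)$, which by Proposition~\ref{prop: cross product action1} equals $-2\ast(d\gamma)_{\wedge^4_{27}} - \tfrac43\gamma$, and every remaining zeroth-order operator is $\G_2$-equivariant on the irreducible bundle $\wedge^3_{27}M$, hence a scalar multiple of $\gamma$ computable from $(\ref{eq1})$, $(\ref{eq2})$ and the identity $\nabla_X A = \tfrac13 A_{X\star}(A)$. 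Adding $-C_{\wedge^3_{27}}$ and $\mathbf{i}\,C_{\Sym_0}\,\mathbf{i}^{-1}$, and keeping the $\wedge^4_7$- and $\wedge^4_{27}$-components separate via $\ast\colon\wedge^4_{27}M \cong \wedge^3_{27}M$, should produce exactly $\Delta\gamma - 2\ast(d\gamma)_{\wedge^4_7} + 2\ast(d\gamma)_{\wedge^4_{27}} + 4\gamma$.

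The hard part is the bookkeeping in this second step: carrying the torsion correction simultaneously through the connection Laplacian and the curvature endomorphism on symmetric $2$-tensors, and turning every $A$-contraction into the language of $d\gamma$, $\delta\gamma$ and their $\G_2$-components while strictly separating the $\wedge^4_7$- and $\wedge^4_{27}$-parts. It is precisely the shift of the $\wedge^4_{27}$-coefficient (from $+\tfrac23$ in Proposition~5.3 to $+2$ here) together with the appearance of the zeroth-order term $+4\gamma$ that carries the content of the lemma. A convenient consistency check is to test the identity on an eigenform with $\ast d\gamma = \lambda\gamma$, where both sides collapse to a numerical relation between the $\Delta$-eigenvalues on $\Sym_0 M$ and on $\wedge^3_{27}M$; one may also verify it against the known behaviour of $\Delta$ on the $\wedge^3_1$- and $\wedge^3_7$-components of $\wedge^3 M$.
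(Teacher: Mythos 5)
The paper does not actually prove this lemma; it cites it as Proposition~6.1 of Alexandrov--Semmelmann, so there is no internal proof to compare with. That said, your strategy of routing the comparison through the canonical connection is the natural one and matches the methodology used elsewhere in the paper, and your opening observation that $\mathbf{i}\,\bar\Delta\,\mathbf{i}^{-1} = \bar\Delta$ on $\wedge^3_{27}M$ is correct and genuinely useful: $\mathbf{i}$ is $\bar\nabla$-parallel, and $q(\bar R)$ is a sum of compositions of $\mathfrak{g}_2$-actions (since $\bar R$ is symmetric and $\wedge^2_{14}$-valued), so both $\bar\nabla^{\ast}\bar\nabla$ and $q(\bar R)$ commute with the $\G_2$-equivariant isomorphism $\mathbf{i}$.

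The shortfall is that you leave the zeroth-order scalar unverified, and that carries half the content of the lemma. To pin this down: on $\Sym_0 M$, with $\tau_0=4$ and working at a $\bar\nabla$-parallel frame, one finds $\nabla^{\ast}\nabla - \bar\nabla^{\ast}\bar\nabla = -\tfrac{2}{3}A_{e_i\star}\bar\nabla_{e_i} - \tfrac{1}{9}A_{e_i\star}A_{e_i\star} - \tfrac{1}{3}(\nabla_{e_i}A)_{e_i\star}$; note the sign of the first-order coefficient, which your verbal description inverts. Pushing the $-\tfrac{2}{3}A_{e_i\star}\bar\nabla_{e_i}$ term through $\mathbf{i}$ by Proposition~\ref{prop: cross product action1} gives $\tfrac{4}{3}\ast(d\gamma)_{\wedge^4_{27}} + \tfrac{8}{9}\gamma$, and combining with $-C_{\wedge^3_{27}}\gamma = -2\ast(d\gamma)_{\wedge^4_7} + \tfrac{2}{3}\ast(d\gamma)_{\wedge^4_{27}}$ already produces $-2\ast(d\gamma)_{\wedge^4_7} + 2\ast(d\gamma)_{\wedge^4_{27}} + \tfrac{8}{9}\gamma$, so the $\wedge^4_{27}$ coefficient works out. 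What remains is to show that the purely algebraic operator $-\tfrac{1}{9}A_{e_i\star}A_{e_i\star} - \tfrac{1}{3}(\nabla_{e_i}A)_{e_i\star} + q(R) - q(\bar R)$ acts on the $\G_2$-irreducible bundle $\Sym_0 M$ as the scalar $\tfrac{28}{9}$, so that $\tfrac{8}{9} + \tfrac{28}{9} = 4$. Schur's lemma tells you it is a scalar and you say "computable", but the number is not computed in your proposal — e.g.\ by evaluating on a test element of $\Sym_0\R^7$, or by determining the Casimir value of $A_{e_i\star}A_{e_i\star}$ on the $27$-dimensional representation and the scalar of $q(R)-q(\bar R)$ from Lemma~\ref{lem: difference curvatures}. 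Until that is done, the $+4\gamma$ term in the statement remains an assertion.
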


Now, we consider the space $R_{\gamma} = \left\{ \gamma \in \Omega^3_{27}M \mid \ast d\gamma = -\frac{1}{2}\gamma \right\}$ corresponding to Rarita-Schwinger fields. Through the isomorphism $\mathbf{i}^{-1}: \wedge^3_{27}M \to \Sym_0M$, the space $R_{\gamma}$ is isomorphic to the space
\[
R_H \coloneqq \left\{ H \in \Gamma(\Sym_0M) \mid 3A_{e_i \star}\bar{\nabla}_{e_i}H = -H, \delta H=0 \right\}.
\]
As already described in Theorem \ref{thmB}, the space $R_{\gamma}$ is contained in $\left\{ \gamma \in \Omega^3_{27}M \mid \Delta\gamma = \frac{1}{4}\gamma \right\}$. Employing Lemma \ref{lem: Laplacian difference between Sym_0 and Omega^3_{27}}, we also see that the space $R_H$ is contained in \\ $\left\{ H \in \Gamma(\Sym_0M) \mid \Delta H = \frac{13}{4}H \right\}$. Thus, we know that $g$ is linearly unstable on complete nearly parallel $\G_2$-manifolds with non-zero Rarita-Schwinger fields.

\section*{Acknowledgement}
We are immensely grateful to our supervisor, Professor Yasushi Homma, for innumerable support, advice, and providing us with the topic. We are also deeply grateful to Takuma Tomihisa for engaging us in various discussions.

\end{document}